\def\squarebox#1{\hbox to #1{\hfill\vbox to #1{\vfill}}}
\newtheorem{Thm}{Theorem}[section]
\newtheorem{lem}{Lemma}[section]
\newtheorem{proposition}{Proposition}[section]
\numberwithin{equation}{section}
\newcommand{\bel}{\begin{equation} \label}
\newcommand{\ee}{\end{equation}}
\newcommand{\pd}{\partial}
\newcommand{\R}{\mathbb{R}}
\def\epsilon{\varepsilon}
\def\phi {\varphi}
\newtheorem{rem}{Remark}[section]
\newtheorem{prop}{Proposition}[section]
\providecommand{\abs}[1]{\left\lvert#1\right\rvert}
\providecommand{\norm}[1]{\left\lVert#1\right\rVert}
\numberwithin{equation}{section}
\renewcommand{\leq}{\leqslant}
\renewcommand{\geq}{\geqslant}
\providecommand{\abs}[1]{\left\lvert#1\right\rvert}
\providecommand{\norm}[1]{\left\lVert#1\right\rVert}
\def\beq{\begin{equation}}
\def\eeq{\end{equation}}
\newcommand{\bea}{\begin{eqnarray}}
\newcommand{\eea}{\end{eqnarray}}
\newcommand{\beas}{\begin{eqnarray*}}
\newcommand{\eeas}{\end{eqnarray*}}
\begin{document}

\title[Logarithmic stability in determining the zero order term]{Logarithmic stability in determining the time-dependent zero order coefficient in a parabolic equation from a partial Dirichlet-to-Neumann map. Application to the determination of a nonlinear term}

\author[Mourad Choulli]{Mourad Choulli}
\address{\dag IECL, UMR CNRS 7502, Universit\'e de Lorraine, Boulevard des Aiguillettes BP 70239 54506 Vandoeuvre Les Nancy cedex- Ile du Saulcy - 57 045 Metz Cedex 01 France}
\email{mourad.choulli@univ-lorraine.fr}

\author[Yavar Kian]{Yavar Kian}
\address{Aix Marseille Universit\'e, CNRS, CPT UMR 7332, 13288 Marseille, France \& Universit\'e de Toulon, CNRS, CPT UMR 7332, 83957 La Garde, France}
\email{yavar.kian@univ-amu.fr}

\begin{abstract}
We give a new stability estimate for the problem of determining  the time-dependent zero order coefficient in a parabolic equation from a partial parabolic Dirichlet-to-Neumann map. The novelty of our result is that, contrary to the previous works, we do not need any measurement on the final time. We also show how this result can be used to establish a stability estimate for the problem of determining the nonlinear term in a semilinear parabolic equation from the corresponding ``linearized'' Dirichlet-to-Neumann map. The key ingredient in our analysis is a parabolic version of an elliptic Carleman inequality due to Bukhgeim and Uhlmann \cite{BU}. This parabolic Carleman inequality enters in an essential way in the construction of CGO solutions that vanish at a part of the lateral boundary.

\medskip
\noindent
{\bf  Keywords:} Parabolic equation, Carleman inequality, logarithmic stability, partial Dirichlet-to-Neumann map, semilinear parabolic equation.

\medskip
\noindent
{\bf Mathematics subject classification:} 35R30, 35K20, 35K58.
\end{abstract}

\maketitle


\section{Introduction}
\label{sec-intro}
\setcounter{equation}{0}

Let $\Omega$ be a $C^2$-bounded domain of $\mathbb{R}^n$, $n\ge 2$, with boundary $\Gamma$ and, for $T>0$, set
\[
Q=\Omega\times(0,T),\quad \Omega_+=\Omega \times \{0\},\quad \Sigma=\Gamma \times(0,T).
\] 

In all of this text, the symbol $\Delta$ denotes the Laplace operator with respect to the space variable $x$. 

\smallskip
Consider the initial boundary value problem, abbreviated to IBVP in the sequel,
\begin{equation}\label{eqq1}
\left\{\begin{array}{ll}(\partial_t-\Delta +q(x,t))u=0\quad \textrm{in}\ Q,
\\  u_{|\Omega _+}=0,
\\ u_{|\Sigma}=g.\end{array}\right.\end{equation}

We are mainly interested in the stability issue of the problem of determining the time-dependent coefficient $q$ by measuring the corresponding solution $u$ of the IBVP \eqref{eqq1} on a part of $\Sigma$ when $g$ is varying in a suitable set of data, which means that we want to  establish a stability estimate of recovering $q$ from a partial Dirichlet-to-Neumann map, denoted by DtN map in the sequel.

\smallskip
The IBVP \eqref{eqq1} is for instance a typical model of the propagation of the heat through a time-evolving homogeneous body. The goal is to  determine the coefficient $q$, who contains some properties of the body, by applying a heat source on some part of the boundary of the body and  measuring the temperature on another part of the boundary of the body. Another classical inverse parabolic problem consists in determining the diffusion coefficient of an inhomogeneous medium through an IBVP for the equation $\partial_tv-\textrm{div}(a(t,x)\nabla v)= 0$. This last problem can be converted to the previous one  by means of the Liouville transform $u=\sqrt{a}v$. 
In many applications we are often lead to determine  physical quantities via parabolic IBVP's including nonlinear terms from boundary measurements. For instance such kind of problems appears in reservoir simulation, chemical kinetics and aerodynamics.

\smallskip
We introduce the functional space setting in order to define the DtN map associated to the IBVP \eqref{eqq1}.  Following  Lions and Magenes \cite{LM2}, $H^{-r,-s}(\Sigma )$, $r,s>0$, denotes the dual space of
\[
H^{r,s}_{,0}(\Sigma )=L^2(0,T;H^r(\Gamma ))\cap H_0^s(0,T;L^2(\Gamma )).
\]
From Proposition \ref{p2} in Section 2,  for $q\in L^\infty(Q)$ and $g\in H^{-{1\over 2},-{1\over4}}(\Sigma)$, the IBVP \eqref{eqq1} admits a unique transposition solution $u_{q,g}\in L^2(Q)$. Additionally, where $\nu$ is  the unit exterior normal vector field on $\Gamma$, the following parabolic DtN  map 
\begin{align*}
\Lambda_q:H^{-{1\over 2},-{1\over4}}(\Sigma)&\rightarrow H^{-{3\over 2},-{3\over4}}(\Sigma)
\\
g&\mapsto \pd_\nu u_{q,g}
\end{align*}
is bounded.

\smallskip
For $\omega\in\mathbb S^{n-1}$, set
\[
\Gamma_{\pm ,\omega}=\{x\in\Gamma ;\; \pm \nu(x)\cdot\omega > 0\}
\]
and $\Sigma_{\pm ,\omega}=\Gamma_{\pm,\omega}\times (0,T)$. 

\smallskip
Fix $\omega _0\in \mathbb{S}^{n-1}$, $\mathcal{U}_\pm$ a neighborhood of $\Gamma_{\pm ,\omega _0}$ in $\Gamma$ and set $\mathcal{V}_+ =\mathcal{U}_+ \times [0,T]$, $\mathcal{V}_- =\mathcal{U}_- \times (0,T)$. Define then the partial parabolic DtN operator
\begin{align*}
\widehat{\Lambda}_q:H^{-{1\over 2},-{1\over4}}(\Sigma)\cap \mathscr{E}'(\mathcal{V}_+)&\rightarrow H^{-{3\over 2},-{3\over4}}(\mathcal{V}_-)
\\
g&\mapsto \pd_\nu u_{q,g}{_{|\mathcal{V}_-}}.
\end{align*}
Here $\mathscr{E}'(\mathcal{V}_+)=\{ u\in \mathscr{E}'(\Gamma \times \mathbb{R});\; \mbox{supp}(u)\subset \mathcal{V}_+\}$ and $H^{-{3\over 2},-{3\over4}}(\mathcal{V}_-)$ denotes the quotient space
\[
H^{-{3\over 2},-{3\over4}}(\mathcal{V}_-)=\{ h=g_{|\mathcal{V}_-};\; g\in H^{-{3\over 2},-{3\over4}}(\Sigma)\}.
\]
Henceforth, the space $H^{-{3\over 2},-{3\over4}}(\mathcal{V}_-)$ is equipped with its natural quotient norm. 

\smallskip
We note that $\Lambda _q-\Lambda_{\widetilde{q}}$ has a better regularity than $\Lambda _q$ and $\Lambda_{\widetilde{q}}$ individually. Precisely, Proposition \ref{p3} in Section 2 shows that actually $\Lambda _q-\Lambda_{\widetilde{q}}\in \mathscr{B} (H^{-{1\over 2},-{1\over4}}(\Sigma),H^{{1\over 2},{1\over4}}(\Sigma ))$. The same remark is also valid for  $\widehat{\Lambda} _q-\widehat{\Lambda}_{\widetilde{q}}$. 

\smallskip
The first author establish in \cite{Ch} a logarithmic stability estimate for the problem of determining the zero order term from the parabolic DtN map $\Lambda _q$ together with the final data $g\rightarrow u_{q,g}(\cdot ,T)$. As a first result in the present work we improve this stability estimate.

\smallskip
In this text, the unit ball of a Banach space $X$ will be denoted in the sequel by $B_X$. 

\medskip
For $\frac{1}{2(n+3)}<s<\frac{1}{2(n+1)}$, set
\begin{equation}\label{Psi}
\Psi _s(\rho )=\rho + |\ln \rho |^{-\frac{1-2s(n+1)}{8}},\;\; \rho >0,
\end{equation}
extended by continuity at $\rho =0$ by setting $\Psi _s(0)=0$. 

\begin{Thm}\label{tt1} 
Fix $m>0$ and $\frac{1}{2(n+3)}<s<\frac{1}{2(n+1)}$. There exists a constant $C>0$, that can depend only on $m$, $Q$ and $s$, so that, for any  $q,\widetilde{q} \in mB_{L^\infty (Q)}$, 
\begin{equation}\label{tt1a} 
\norm{q_1-q_2}_{H^{-1}(Q)}\leq C\Psi _s\left(\norm{\Lambda_{q_1}-\Lambda_{q_2}}\right).
\end{equation}
Here $\norm{\Lambda_{q_1}-\Lambda_{q_2}}$ stands for the norm of $\Lambda_{q_1}-\Lambda_{q_2}$ in $\mathscr{B}(H^{-{1\over2},-{1\over4}}(\Sigma);H^{{1\over2},{1\over4}}(\Sigma))$. 
\end{Thm}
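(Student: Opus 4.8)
The plan is to combine an Alessandrini-type integral identity with a family of complex geometric optics (CGO) solutions, and then to optimize a frequency cut-off so as to produce the logarithmic modulus $\Psi_s$.

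First I would fix $q_1,q_2\in mB_{L^\infty(Q)}$, set $q=q_1-q_2$ (extended by $0$ outside $Q$), and derive the identity
\begin{equation*}
\int_Q (q_1-q_2)\,u_1 v\,\d x\,\d t=-\langle(\Lambda_{q_1}-\Lambda_{q_2})f_1,f_2\rangle ,
\end{equation*}
where $u_1$ is the forward transposition solution of $(\pd_t-\Delta+q_1)u_1=0$ with $u_1{_{|\Omega_+}}=0$ and $u_1{_{|\Sigma}}=f_1$, and $v$ is the backward solution of the formal adjoint $(-\pd_t-\Delta+q_2)v=0$ with vanishing \emph{final} data $v(\cdot,T)=0$ and $v{_{|\Sigma}}=f_2$. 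The decisive point, and the source of the improvement over \cite{Ch}, is precisely the use of the backward adjoint with zero final data: integrating by parts in $t$ produces the time boundary term $\int_\Omega u_1(T)v(T)\,\d x-\int_\Omega u_1(0)v(0)\,\d x$, whose first summand vanishes because $v(\cdot,T)=0$ and whose second vanishes because $u_1{_{|\Omega_+}}=0$. Hence no measurement of $u_{q,g}(\cdot,T)$ survives, and only the lateral contribution, expressible through $\Lambda_{q_1}-\Lambda_{q_2}$ by Proposition \ref{p3}, remains.

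Next I would construct, for a large parameter $\sigma>0$ and a frequency $(\xi,\tau)\in\R^n\times\R$, CGO solutions $U_1=e^{\zeta_1\cdot x+\mu_1 t}(1+r_1)$ and $V=e^{\zeta_2\cdot x+\mu_2 t}(1+r_2)$ with $\mu_1=\zeta_1\cdot\zeta_1$ and $\mu_2=-\zeta_2\cdot\zeta_2$, so that the leading exponentials solve the free forward and backward heat operators. Writing $\zeta_{1,2}=-\tfrac{i}{2}\xi\pm\eta$ with $|\eta|\sim\sigma$ and $\eta\cdot\xi=\tau/2$ makes the spatial growth of $U_1$ and $V$ cancel, forces $\zeta_1+\zeta_2=-i\xi$ and $\mu_1+\mu_2=-i\tau$, and yields $U_1V=e^{-i(\xi\cdot x+\tau t)}(1+r_1+r_2+r_1r_2)$. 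The remainders would be produced by the resolvent estimate for the conjugated operator, giving $\norm{r_j}_{L^2(Q)}=O(\sigma^{-1})$, so that
\begin{equation*}
\abs{\widehat{q}(\xi,\tau)}\leq \abs{\langle(\Lambda_{q_1}-\Lambda_{q_2})f_1,f_2\rangle}+C\,m\,\sigma^{-1}+(\textrm{correction terms}),
\end{equation*}
with the boundary norms of $f_1,f_2$ of size $e^{C\sigma}$, whence $\abs{\langle(\Lambda_{q_1}-\Lambda_{q_2})f_1,f_2\rangle}\leq e^{C\sigma}\norm{\Lambda_{q_1}-\Lambda_{q_2}}$.

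The hard part is the correction terms. Since $U_1$ and $V$ solve the PDEs but need not satisfy the homogeneous initial and final conditions, I must replace them by the IBVP solutions $u_1=U_1-R_1$ and $v=V-R_2$ that do, where $R_1$ (resp.\ $R_2$) solves the forward (resp.\ backward) equation with zero lateral data and initial (resp.\ final) data $U_1(\cdot,0)$ (resp.\ $V(\cdot,T)$). Naively these corrections are as large as the exponential weights, so the whole scheme hinges on choosing $\eta$ with a suitably large imaginary part to render $\mathrm{Re}\,\mu_1$ and $\mathrm{Re}\,\mu_2$ negative, and on exploiting the parabolic smoothing and decay of $R_1,R_2$ together with the Carleman weight, so that $\int_Q q\,(U_1R_2+R_1V-R_1R_2)$ stays of order $o(1)$ relative to the main term. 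This balancing of the time exponentials, carried out uniformly in $(\xi,\tau)$, is where the parabolic Carleman inequality and the anisotropic scaling $\tau\sim|\xi|^2$ are essential, and it is the genuinely new difficulty compared with the elliptic Bukhgeim--Uhlmann argument. Finally I would insert the frequency-wise bound into $\norm{q}_{H^{-1}(Q)}^2=\int(1+\abs{\xi}^2+\abs{\tau}^2)^{-1}\abs{\widehat q(\xi,\tau)}^2\,\d\xi\,\d\tau$ and split at a radius $R(\sigma)$: on the low frequencies I use $\abs{\widehat q}\leq e^{C\sigma}\norm{\Lambda_{q_1}-\Lambda_{q_2}}+C\sigma^{-1}$, and on the high frequencies the a priori bound $\norm{q}_{L^2(Q)}\leq C(m)$ gives an algebraically decaying tail. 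Optimizing the cut-off and $\sigma$ against $\norm{\Lambda_{q_1}-\Lambda_{q_2}}$---with the range $\frac{1}{2(n+3)}<s<\frac{1}{2(n+1)}$ fixing the interpolation exponents that control the boundary norms of the CGO data in space-time dimension $n+1$---yields the exponent $\frac{1-2s(n+1)}{8}$ and the claimed estimate, the linear term $\rho$ in $\Psi_s$ absorbing the regime where $\norm{\Lambda_{q_1}-\Lambda_{q_2}}$ is not small.
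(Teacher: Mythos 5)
Your overall architecture --- the integral identity of Proposition \ref{p3}, products of forward/backward CGO solutions generating $e^{-i\zeta\cdot(x,t)}$, and the low/high frequency splitting with a final optimization --- is the paper's. The genuine gap is exactly at the point you yourself flag as ``the hard part'': the initial and final conditions. First, your proposed remedy is internally inconsistent. You impose $\mu_1+\mu_2=-i\tau$, which forces $\mathrm{Re}\,\mu_1=-\mathrm{Re}\,\mu_2$, so it is impossible to ``render $\mathrm{Re}\,\mu_1$ and $\mathrm{Re}\,\mu_2$ negative'' simultaneously: one of the two exponentials must grow in time, and correspondingly one of the data $U_1(\cdot,0)$, $V(\cdot,T)$ is exponentially large. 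Second, the correction terms cannot be made $o(1)$. For instance $R_1$ solves a forward IBVP with initial datum $U_1(\cdot,0)=e^{\zeta_1\cdot x}\bigl(1+r_1(\cdot,0)\bigr)$, of size $e^{|\mathrm{Re}\,\zeta_1|\,\mathrm{diam}(\Omega)}$; the best available bound (parabolic comparison with the supersolution $Ce^{\mathrm{Re}\,\zeta_1\cdot x+(|\mathrm{Re}\,\zeta_1|^2+m)t}$) gives only the pointwise cancellation $|R_1V|\le Ce^{(m+\frac14|\xi|^2)t}$, so $\int_Q q\,R_1V$ is $O\bigl(e^{CR^2}\bigr)$ on the frequency ball $|\zeta|\le R$ --- and even an $O(1)$ error in the bound for $\widehat{q}(\zeta)$ is useless, since $|\widehat{q}(\zeta)|\le m|Q|$ holds trivially. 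The same irreducible $O(1)$ obstruction appears if one relaxes to a backward solution with merely small (not exactly zero) final data: the time-boundary term is then $\int_\Omega u_1(\cdot,T)v(\cdot,T)\,dx\sim e^{\rho^2T}\cdot e^{-\rho^2T}=O(1)$. No parabolic smoothing argument closes this; it is precisely why previous works (Isakov) needed the final-time measurement.

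The paper's resolution, which is the one idea missing from your proposal, is to build the vanishing at $\Omega_+$ (forward) and $\Omega_-$ (backward) \emph{exactly} into the CGO amplitude rather than correcting afterwards. In Theorem \ref{t3} the ansatz is $u_{\pm,q}=\psi_{\mp,\omega,\rho}\left(\theta_\pm+w_{\pm,q}\right)$ with $\theta_+=\bigl(1-e^{-\rho^{3/4}t}\bigr)e^{-i(x,t)\cdot\zeta}$ and $\theta_-=1-e^{-\rho^{3/4}(T-t)}$: these cut-offs vanish identically at $t=0$, resp.\ $t=T$, and differ from the pure exponential only on a time layer of width $\rho^{-3/4}$, contributing errors $O(\rho^{-3/8})$ to the identity (cf.\ $\int_0^Te^{-2\rho^{3/4}t}\,dt\le\rho^{-3/4}/2$ in Lemma \ref{l7}). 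The remainders $w_{\pm,q}$ are then produced by the Carleman-based solvability Lemma \ref{l5} --- this is where Theorem \ref{t2} actually enters, not in any balancing of time exponentials --- with zero data at $\Omega_\pm$ and on part of the lateral boundary, and with $\norm{w_{+,q}}_{L^2(Q)}\le C\bigl(\rho^{-1/4}+\rho^{-1}\langle\zeta\rangle^2\bigr)$. Once the CGOs have exactly zero initial/final data, Lemma \ref{l7} and the choice $R=\rho^s$ (which is where the exponent $\frac{1-2s(n+1)}{8}$ comes from, via $\alpha=\frac12-s(n+1)$ and the minimization of $\rho^{-\alpha}+\gamma^2e^{c\rho^2}$) go through as in your final paragraph.
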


In the case of the infinite cylindrical domain $Q=\Omega \times (0,\infty )$, Isakov \cite{I4} got a stability estimate of determining $q=q(x)$ from the full parabolic DtN map by combining the decay in time of solutions of parabolic equations and the stability estimate in \cite{A} concerning the problem of determining the zero order coefficient in a elliptic BVP from a full DtN map.
For finite cylindrical domain $Q=\Omega \times (0,T)$, to our knowledge, even for time-independent coefficients, there is no result in the literature dealing with the stability issue of recovering of $q$ from the full DtN map $\Lambda_q$.

\smallskip
In fact Theorem \ref{tt1} is obtained as by-product of the analysis we developed to derive a logarithmic stability estimate for the problem of determining $q$ from the partial parabolic DtN map $\widehat{\Lambda}_q$.  This result is stated in the following theorem, where

\begin{equation}\label{Phi}
\Phi _s(\rho )=\rho + |\ln |\ln \rho ||^{-s},\;\; \rho >0,\; s>0,
\end{equation}
extended by continuity at $\rho =0$ by setting $\Phi _s(0)=0$.

\begin{Thm}\label{t1} 
Let $m>0$,  there exist two constants $C>0$ and $s\in (0,1/2)$,  that can depend only  on $m$, $Q$ and $\mathcal{V}_\pm$, so that, for any $q,\widetilde{q} \in mB_{L^\infty (Q)}$,
\begin{equation}\label{t1a} 
\norm{q_1-q_2}_{H^{-1}(Q)}\leq C \Phi _s\left(\|\widehat{\Lambda}_q-\widehat{\Lambda}_{\widetilde{q}}\| \right).
\end{equation}
Here $\|\widehat{\Lambda}_q-\widehat{\Lambda}_{\widetilde{q}}\|$ denotes the norm of $\widehat{\Lambda}_q-\widehat{\Lambda}_{\widetilde{q}}$ in $\mathscr{B}(H^{-{1\over 2},-{1\over4}}(\Sigma);H^{{1\over2},{1\over4}}(\mathcal{V}_-))$. 
\end{Thm}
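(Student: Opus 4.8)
The plan is to adapt the Bukhgeim--Uhlmann partial-data scheme to the parabolic operator and to derive \eqref{t1a} from an orthogonality relation coupled with complex geometric optics (CGO) solutions that vanish on the part of the lateral boundary carrying no data. Write $q_1=q$, $q_2=\widetilde q$ and consider their difference $q_1-q_2$. The starting point is the integral identity obtained by pairing a forward solution with a backward adjoint one: if $u=u_{q_1,g}$ solves $(\partial_t-\Delta+q_1)u=0$ in $Q$ with $u_{|\Omega_+}=0$ and Dirichlet data $g$ supported in $\mathcal V_+$, while $v$ solves the backward adjoint equation $(-\partial_t-\Delta+q_2)v=0$ with $v_{|\Omega\times\{T\}}=0$ and $v$ vanishing on $\Sigma\setminus\mathcal V_-$, then two integrations by parts give
\[
\int_Q (q_1-q_2)\,u\,v\,\d x\,\d t=\big\langle(\widehat\Lambda_{q_1}-\widehat\Lambda_{q_2})g,\;v_{|\mathcal V_-}\big\rangle .
\]
In particular the right-hand side is bounded by $\|\widehat\Lambda_{q_1}-\widehat\Lambda_{q_2}\|$ times boundary norms of $g$ and $v$; the improved regularity of $\widehat\Lambda_{q_1}-\widehat\Lambda_{q_2}$ furnished by Proposition \ref{p3} is what makes this pairing licit.

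The core of the proof is the construction, by means of the parabolic Carleman inequality, of CGO solutions $u=e^{\frac1h(x\cdot\omega_0)}(a+r_1)$ and $v=e^{-\frac1h(x\cdot\omega_0)}(b+r_2)$, where the amplitudes $a,b$ carry a space--time oscillation of the form $e^{i(\xi\cdot x+\tau t)}$ and solve the associated transport equations, and where the remainders $r_1,r_2$ are forced by the Carleman estimate to be $O(h)$ in a suitable weighted norm. The decomposition $\Gamma=\Gamma_{+,\omega_0}\cup\Gamma_{-,\omega_0}$ relative to the linear weight $x\cdot\omega_0$ is precisely what lets the Carleman estimate absorb the boundary terms on the unobserved part, so that $v$ can be taken to vanish on $\Sigma\setminus\mathcal V_-$ and $u$ to have Dirichlet trace supported in $\mathcal V_+$. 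Since $q$ depends on both $x$ and $t$, the oscillation must be genuinely space--time in order to recover the full Fourier transform $\widehat q(\xi,\tau)$.

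Inserting these solutions into the identity and letting $h\to 0$, the leading term reproduces $\widehat q(\xi,\tau)$ while the remainder contributions are $O(h)$ and the boundary pairing carries a factor $e^{C/h}\|\widehat\Lambda_{q_1}-\widehat\Lambda_{q_2}\|$ coming from the exponential weights on $\Sigma$. This yields, for $(\xi,\tau)$ in the admissible window, a frequency-wise bound of the type $|\widehat q(\xi,\tau)|\le C\big(e^{C/h}\|\widehat\Lambda_{q_1}-\widehat\Lambda_{q_2}\|+h\big)$. One then converts this into the $H^{-1}(Q)$ estimate by splitting $\|q_1-q_2\|_{H^{-1}(Q)}^2$ into frequencies $|(\xi,\tau)|\le R$ and $|(\xi,\tau)|>R$: the high-frequency tail is controlled by $CR^{-2}m^2$ through the a priori bound $\|q_i\|_{L^\infty(Q)}\le m$, while the low-frequency part is controlled by the previous inequality. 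The essential difference with the full-data Theorem \ref{tt1} is that the partial-data Carleman construction only reconstructs $\widehat q$ over a frequency window whose radius grows merely like $(\ln(1/h))^{\beta}$, and since $1/h$ can itself be taken at most of order $\ln(1/\|\widehat\Lambda_{q_1}-\widehat\Lambda_{q_2}\|)$, the admissible cutoff $R$ is of double-logarithmic size in $\|\widehat\Lambda_{q_1}-\widehat\Lambda_{q_2}\|$; optimizing then produces exactly the modulus $\Phi_s$, in contrast with the milder single logarithm $\Psi_s$ available without the Carleman loss.

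I expect the main obstacle to be the second step: establishing the parabolic Carleman inequality with the correct boundary terms and using it to build CGO solutions that genuinely vanish on $\Sigma\setminus\mathcal V_-$ while retaining $O(h)$ control of the remainders uniformly in the temporal frequency $\tau$. The non-self-adjoint, anisotropic scaling of $\partial_t-\Delta$ (first order in $t$, second order in $x$) makes both the conjugated-operator estimate and the solvability of the transport and remainder equations considerably more delicate than in the elliptic case, and it is the sharp tracking of the $h$-dependence there---together with the size of the admissible frequency window---that ultimately fixes the exponent $s$ appearing in $\Phi_s$.
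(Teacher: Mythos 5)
Your overall scaffolding (the integral identity from Proposition \ref{p3}, CGO solutions vanishing on the inaccessible part of the lateral boundary built from the parabolic Carleman inequality, then a low/high frequency splitting of the $H^{-1}$ norm) matches the paper's, but there is a genuine gap at the crucial step, and your explanation of where the double logarithm comes from is not the correct mechanism. The CGO construction (Theorem \ref{t3}) requires the spatial frequency $\xi$ to be orthogonal to the weight direction $\omega$: otherwise the conjugated operator produces a term $2i\rho\,\omega\cdot\xi$ of size $\rho|\xi|$ that the remainder estimate cannot absorb. In the partial-data setting the admissible directions $\omega$ are confined to a small neighborhood $O_\delta$ of $\omega_0$ (so that the solutions vanish where there is no data), hence Lemma \ref{l7} yields a bound on $\widehat{p}(\xi,\tau)$ only for frequencies in the cone $E_1=\bigcup_{\omega\in O_\delta}\{(\xi,\tau);\ \xi\cdot\omega=0\}$ --- a \emph{directional} restriction, not a radial one. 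Your proposal never explains how to control $\widehat{p}$ at frequencies $\xi$ nearly parallel to $\omega_0$, and without that the low-frequency part of $\norm{q_1-q_2}_{H^{-1}(Q)}$ cannot be estimated: the splitting into $|\zeta|\le R$ and $|\zeta|>R$ needs the pointwise bound on the \emph{whole} ball $|\zeta|\le R$.

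The missing ingredient, which is the heart of the paper's proof, is a quantitative analytic continuation: since $p$ is compactly supported, $\widehat{p}$ is real-analytic with Cauchy-type derivative bounds, and the paper applies the observability result of Apraiz--Escauriaza--Wang--Zhang (Theorem \ref{thm-cpl}) to the rescaled function $H(\zeta)=\widehat{p}(R\zeta)$ to propagate the estimate from the measurable set $E=\{\zeta\in E_1;\ |\zeta|<1\}$ to the full unit ball, at the price of a multiplicative factor $e^{\sigma R}$ and a H\"older exponent $\theta\in(0,1)$. It is precisely this loss --- not a logarithmically small frequency window --- that generates the second logarithm: after the continuation one must take $\rho$ exponentially large in $R$ (the paper chooses $e^{\beta R}\rho^{-1/2}=R^{-2/\theta}$), so the DtN term inherits a factor $e^{c\rho^2}$ of order $e^{e^{CR}}$, and the final minimization $R\sim\ln\ln\bigl(1/\|\widehat{\Lambda}_q-\widehat{\Lambda}_{\widetilde q}\|\bigr)$ produces $\Phi_s$. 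Note also that your claim that the Carleman-based construction only reaches frequencies of size $(\ln(1/h))^{\beta}$ is inaccurate even in detail: the remainder bound \eqref{GO4} is $C(\rho^{-1/4}+\rho^{-1}\langle\zeta\rangle^2)$, which tolerates $|\zeta|$ up to a power of $\rho$; in the full-data Theorem \ref{tt1} this is exactly why one gets a single logarithm, the difference there being that $\omega$ ranges over all of $\mathbb{S}^{n-1}$ so the hyperplanes $\{\xi\cdot\omega=0\}$ cover every frequency and no analytic continuation is needed.
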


It is worth mentioning that the uniqueness holds for the problem of determining $q$ from the partial DtN operator that maps the boundary condition $g$ supported on $\Gamma _0\times (0,T)$ into $\partial _\nu u_{q,g}$ restricted to $\Gamma _1\times (0,T)$, where $\Gamma _i$, $i=0,1$, are arbitrary nonempty open subsets of $\Gamma$. This result is stated as Theorem 3.27 in \cite[page 197]{Ch}. We note that the stability estimate corresponding to this uniqueness result remains an open problem.

\smallskip
In order to avoid the data at the final time, we adopt a strategy based on a parabolic Carleman inequality to construct the so-called CGO solutions vanishing at a part of the lateral boundary similar to that already used by the second author in \cite{Ki1,Ki2,Ki3} for determining  time-dependent coefficients in a wave equation.

\smallskip
There is a wide literature devoted to inverse parabolic problems and specifically the determination of time-dependent coefficients. We just present briefly some typical results. Canon and  Esteva \cite{CE86-1}
proved a logarithmic stability estimate for the determination of the  support of a source term in a one dimension parabolic equation from a boundary measurement.  This result was extended to three dimension heat equation in \cite{CE86-2}. The case of a non local measurement was considered by  Canon and  Lin  in \cite{CL88,CL90}. In \cite{Ch91-1}, the first author  proved existence, uniqueness and Lipschitz stability  for the determination of a time-dependent coefficient appearing in an abstract integro-differential equation, extending earlier results in \cite{Ch91-2}. The first author and  Yamamoto established in \cite{CY06} a stability estimate for the inverse problem of determining a source term appearing in a  heat equation from Neumann boundary measurements. In \cite{CY11}, the first author and  Yamamoto  considered an inverse semi-linear parabolic problem of  recovering the coefficient used  to reach a desired temperature along a curve. In \cite{I}, Isakov extended the construction of complex geometric optics solutions, introduced in \cite{SU}, to various PDE's including hyperbolic and parabolic equations to prove the density of products of solutions. One can get from the results in \cite{I} the unique determination of $q$ from the measurements on the lateral boundary  together with data at the final time.   
When the space domain is cylindrical, adopting the strategy introduced in \cite{BK}, the second author and Ga\" \i tan proved in \cite{GK} that the time-dependent zero order coefficient can be recovered uniquely from a single boundary measurement. Based on properties of fundamental solutions of parabolic equations, we proved in \cite{CK} Lipschitz stability of determining the time-dependent part of the zero order coefficient in a parabolic IBVP from a single boundary measurement. 

\smallskip
We also mention the recent works related to the determination of a time-dependent coefficients in IBVP's for hyperbolic, fractional diffusion and dynamical  Schr\"odinger equations \cite{Be,CKS1,CKS3,FK,Ki1,Ki2,Ki3}.

\smallskip
We  point out that, concerning the elliptic case, a stability estimate corresponding to the uniqueness result by Bukhgeim and Uhlmann \cite{BU} was established by Heck and Wang \cite{HW}. This result was improved by the authors and Soccorsi in \cite{CKS2}. Caro, Dos Santos Ferreira and Ruiz \cite{CDR} obtained recently a logarithmic stability estimate corresponding to the uniqueness result by Kenig, Sj\"ostrand and G. Uhlmann \cite{KSU}. Both the determination of the scalar potential and the conductivity in a periodic cylindrical domain from a partial DtN map was tackled in \cite{CKS4, CKS5}. We just quote these few references. But, of course, there is a tremendous literature on this subject in connection with the famous Calder\`on's problem.

\smallskip
Considering time-dependent unknown coefficients in parabolic equations is very useful when treating the determination of the nonlinear term appearing in a semilinear parabolic equation. We discuss this topic in Section 6. Uniqueness results for such kind of inverse semilinear parabolic problems was already established by Isakov \cite{I2,I3,I4}. Stability estimates and uniqueness in the case of a single boundary lateral measurement has been proved in \cite{COY,Kl} for a restricted class of unknown nonlinearities.

\smallskip
The rest of this text is organized as follows. Section 2 is devoted to existence and uniqueness of solutions of the IBVP \eqref{eqq1} in a weak sense. Following a well established terminology, we call these weak solutions the transposition solutions. We prove in Section 3  a Carleman inequality which is, as we said before, the key point in constructing CGO solutions that vanish on a part of the lateral boundary. These CGO solutions are constructed in Section 4. Theorems \ref{tt1} and \ref{t1} are proved in Section 5. We finally apply in section 6  the result in Theorem \ref{tt1} to the problem of determining the non linear term is a semilinear parabolic equation from the corresponding ``linearized'' DtN map.


\section{Transposition solutions}

This section is mainly dedicated to the IBVP \eqref{eqq1}. We construct the necessary framework leading to the rigorous definition of the parabolic DtN map.

\smallskip
For sake of simplicity, we limit our study in this section to real-valued functions. But all the results are extended without any difficulty to complex-valued functions.

\smallskip
Henceforth
\[
H_{\pm}=\{u\in L^2(Q);\; (\pm\partial_t-\Delta ) u\in L^2(Q)\}
\]
is equiped with its natural norm
\[
\norm{u}_{H_{\pm}}=\left(\norm{u}_{L^2(Q)}^2+\norm{(\pm\partial_t-\Delta ) u}_{L^2(Q)}^2\right)^{1/2}.
\]
Proceeding similarly to  \cite[Theorem 4]{Ki2} or \cite[Lemma 2.1]{CKS4}, we show that  $\mathcal C^\infty(\overline{Q})$ is dense in $H_{\pm}$. 

\smallskip
In the rest of this text, $\Omega _-=\Omega \times \{T\}$.

\smallskip
Denote by $\mathcal{N}$ the set of $(g_0,g_1,w_+,w_-)\in H^{{3\over 2},{3\over4}}(\Sigma)\oplus H^{{1\over 2},{1\over4}}(\Sigma)\oplus H^1(\Omega )\oplus H^1(\Omega )$ satisfying the compatibility conditions
\begin{equation}\label{p1c}
g_0(\cdot ,0)=w_+ \;\; \mbox{and}\;\; g_0(\cdot ,T)=w_-\;\; \mbox{on}\; \Gamma .
\end{equation}

\smallskip
From the results in \cite[Section 2.5, page 17]{LM2}, 
for any $(g_0,g_1,w_+,w_-)\in \mathcal{N}$, there exists \[w=E(g_0,g_1,w_+,w_-)\in H^{2,1}(Q)\] such that, in the trace sense, 
\begin{equation}\label{p1d}
w_{|\Sigma}=g_0,\quad \pd_\nu w_{|\Sigma}=g_1,\quad   w_{|\Omega_{\pm}}=w_\pm
\end{equation}
and
\begin{equation}\label{p1e} \norm{w}_{H^{2,1}(Q)}\leq C\left(\norm{g_0}_{H^{{3\over 2},{3\over4}}(\Sigma)}+\norm{g_1}_{H^{{1\over 2},{1\over4}}(\Sigma)}+\norm{w_-}_{H^1(\Omega)}+\norm{w_+}_{H^1(\Omega)}\right),
\end{equation}
for some constant $C>0$ depending only on $Q$. 

\smallskip
We recall that $H_0^{1\over4}(0,T;L^2(\Gamma ))$ coincides with $H^{1\over4}(0,T;L^2(\Gamma ))$. This fact is more or less known, but for sake of completeness we provide its proof in Lemma \ref{iii} of  Appendix A. Whence $H_{,0}^{{1\over 2},{1\over4}}(\Sigma)$ is identified to $H^{{1\over 2},{1\over4}}(\Sigma)$. Therefore, we identify in the sequel  the dual space of $H^{{1\over 2},{1\over4}}(\Sigma)$ to $H^{-{1\over 2},-{1\over4}}(\Sigma)$.

\begin{prop}\label{p1} The maps $\tau_{j}$, $j=0,1$, and $r_\pm$ defined for $v \in \mathcal C^\infty(\overline{Q})$ by
\begin{equation}\label{p1a} 
\tau_{0}v=v_{|\Sigma},\quad \tau_{1}v=\pd_\nu v_{|\Sigma},\quad r_\pm v=v_{|\Omega _\pm}
\end{equation}
are extended to bounded operators
\begin{equation}\label{p1b} \tau_{0}: H_{\pm}\rightarrow H^{-{1\over 2},-{1\over4}}(\Sigma),\quad \tau_{1}: H_{\pm}\rightarrow H^{-{3\over 2},-{3\over4}}(\Sigma),\quad r_-,\, r_+: H_\pm\rightarrow H^{-1}(\Omega).\end{equation}
\end{prop}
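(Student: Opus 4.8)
The plan is a standard trace-by-duality argument resting on two ingredients already available: the density of $\mathcal{C}^\infty(\overline{Q})$ in $H_\pm$, and the lifting operator $E$ of \eqref{p1d}--\eqref{p1e}. Since $\tau_0,\tau_1,r_\pm$ are defined on the dense subspace $\mathcal{C}^\infty(\overline{Q})$, it suffices to establish, for $v\in\mathcal{C}^\infty(\overline{Q})$, the estimate
\[
\norm{\tau_0 v}_{H^{-{1\over2},-{1\over4}}(\Sigma)}+\norm{\tau_1 v}_{H^{-{3\over2},-{3\over4}}(\Sigma)}+\norm{r_+v}_{H^{-1}(\Omega)}+\norm{r_-v}_{H^{-1}(\Omega)}\leq C\norm{v}_{H_\pm},
\]
the bounded extensions then following by continuity. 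Throughout I write $L_\pm=\pm\pd_t-\Delta$, so that $L_\pm^\ast=\mp\pd_t-\Delta$ and $\norm{L_\pm v}_{L^2(Q)}\leq\norm{v}_{H_\pm}$ by definition of the $H_\pm$-norm.

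The key identity I would use comes from integration by parts. For $v\in\mathcal{C}^\infty(\overline{Q})$ and any $w\in H^{2,1}(Q)$, Green's formula in $x$ together with integration in $t$ gives
\[
\int_Q (L_\pm v)\,w\,\d x\,\d t-\int_Q v\,(L_\pm^\ast w)\,\d x\,\d t=\pm\Big(\int_\Omega (r_-v)\,w_{|\Omega_-}-\int_\Omega (r_+v)\,w_{|\Omega_+}\Big)+\int_\Sigma (\tau_0 v)\,\pd_\nu w-\int_\Sigma (\tau_1 v)\,w_{|\Sigma}.
\]
Given $(g_0,g_1,w_+,w_-)\in\mathcal{N}$, I would take $w=E(g_0,g_1,w_+,w_-)$ and substitute, via \eqref{p1d}, $\pd_\nu w_{|\Sigma}=g_1$, $w_{|\Sigma}=g_0$, $w_{|\Omega_\pm}=w_\pm$. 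Since $v$ is smooth and $w\in H^{2,1}(Q)$ forces $L_\pm^\ast w=\mp\pd_t w-\Delta w\in L^2(Q)$, the left-hand side is bounded by $C\norm{v}_{H_\pm}\norm{w}_{H^{2,1}(Q)}$, and \eqref{p1e} controls $\norm{w}_{H^{2,1}(Q)}$ by the norms of $g_0,g_1,w_\pm$.

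It then remains to isolate each trace by a suitable choice of test data inside $\mathcal{N}$. Taking $(0,g_1,0,0)$, admissible for every $g_1\in H^{{1\over2},{1\over4}}(\Sigma)$, leaves only $\int_\Sigma (\tau_0 v)g_1$, so that $g_1\mapsto\int_\Sigma(\tau_0 v)g_1$ is bounded on $H^{{1\over2},{1\over4}}(\Sigma)$ by $C\norm{v}_{H_\pm}$; by the identification $H^{{1\over2},{1\over4}}_{,0}(\Sigma)=H^{{1\over2},{1\over4}}(\Sigma)$ recorded above, whose dual is $H^{-{1\over2},-{1\over4}}(\Sigma)$, this bounds $\tau_0 v$. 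Choosing $(g_0,0,0,0)$ with $g_0\in H^{{3\over2},{3\over4}}_{,0}(\Sigma)$ (vanishing at $t=0,T$ is exactly what \eqref{p1c} imposes when $w_\pm=0$) isolates $\int_\Sigma(\tau_1 v)g_0$ and, by duality of $H^{-{3\over2},-{3\over4}}(\Sigma)$ with $H^{{3\over2},{3\over4}}_{,0}(\Sigma)$, controls $\tau_1 v$; while $(0,0,w_+,0)$ and $(0,0,0,w_-)$, for which \eqref{p1c} forces $w_\pm\in H^1_0(\Omega)$, isolate $\int_\Omega (r_\pm v)w_\pm$ and give the bounds via $H^{-1}(\Omega)=(H^1_0(\Omega))'$. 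The delicate point is precisely this last matching: one must verify that zeroing the other three components lets the remaining one range over the full predual of the target space while staying in $\mathcal{N}$, and it is exactly the compatibility conditions \eqref{p1c} that send the time-slice test functions into $H^1_0(\Omega)$ and the lateral ones into $H^{{3\over2},{3\over4}}_{,0}(\Sigma)$, so that the duality closes in each case. The $\pm$ signs merely permute the roles of $\Omega_+$ and $\Omega_-$ and do not affect the estimates.
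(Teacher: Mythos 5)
Your proposal is correct and follows essentially the same route as the paper: pairing $v$ against liftings $w=E(g_0,g_1,w_+,w_-)$ via Green's formula, isolating each trace by zeroing the other components of the data in $\mathcal{N}$, and closing the duality using \eqref{p1e} together with the identification of $H^{-{1\over2},-{1\over4}}(\Sigma)$, $H^{-{3\over2},-{3\over4}}(\Sigma)$ and $H^{-1}(\Omega)$ as the relevant dual spaces. The only difference is cosmetic: you state the full integration-by-parts identity once and specialize it, and you write out the $r_\pm$ case (where \eqref{p1c} forces $w_\pm\in H^1_0(\Omega)$), which the paper explicitly leaves to the reader.
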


\begin{proof}  

\smallskip 
Assume that $w=E(0,g_1,0,0)$ for some $(0,g_1,0,0)\in \mathcal{N}$. Then Green's formula,  where $v\in \mathcal C^\infty(\overline{Q})$, yields
\[
\left\langle \tau_0 v,\tau_1 w\right\rangle=\int_Q(\pd_t-\Delta)vwdxdt-\int_Qv(-\pd_t-\Delta )wdxdt
\]
which, combined with \eqref{p1d} and \eqref{p1e}, entails
\[
\abs{\left\langle \tau_0 v,g_1\right\rangle}\leq C\norm{v}_{H_+}\norm{w}_{H^{2,1}(Q)}\leq C\norm{v}_{H_+}\norm{g_1}_{H^{{1\over 2},{1\over4}}(\Sigma)}.
\]
Whence $\tau_0$ can be extended by density to a bounded operator from $H_+$ to $H^{-{1\over 2},-{1\over4}}(\Sigma)$. 

\smallskip
We have similarly, by taking $w=E(g_0,0,0,0)$ for some $(g_0,0,0,0)\in \mathcal{N}$,
\[
-\left\langle \tau_1 v,g_0\right\rangle=\int_Q(\pd_t-\Delta )vwdxdt-\int_Qv(-\pd_t-\Delta )wdxdt.
\]
This formula, \eqref{p1d} and \eqref{p1e} imply
\[
\abs{\left\langle \tau_1 v,g_0\right\rangle}\leq C\norm{v}_{H_+}\norm{w}_{H^{2,1}(Q)}\leq C\norm{v}_{H_+}\norm{g_0}_{H_{,0}^{{3\over 2},{3\over4}}(\Sigma)}.
\]
Consequently, $\tau_1$ can be extended by density to a bounded operator from $H_+$ to $H^{-{3\over 2},-{3\over4}}(\Sigma)$.

\smallskip
The remaining part of the proof can be proved in a similar manner. We leave to the interested reader to write down the details. 
\end{proof}

Introduce 
\begin{align*}
&\mathcal H_\pm=\{\tau_0u;\;  u\in H_+\; \mbox{and}\; r_\pm u=0\},
\\
&S_\pm=\{u\in L^2(Q);\; (\pm \pd_t-\Delta )u=0\; \; \mbox{and}\; r_\pm u=0\}.
\end{align*}

\begin{prop}\label{p1'} 
We have $\tau_0S_\pm=\mathcal H_\pm= H^{-{1\over 2},-{1\over4}}(\Sigma)$ and, for any $u\in S_+$,
\begin{equation}\label{p1'a}
\norm{u}_{H_+}=\norm{u}_{L^2(Q)}\leq C\norm{\tau_0u}_{H^{-{1\over 2},-{1\over4}}(\Sigma)},
\end{equation}
for some constant $C>0$ depending only on $Q$.
\end{prop}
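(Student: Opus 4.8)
The plan is to read Proposition~\ref{p1'} as a well-posedness (transposition) statement: the lateral trace $\tau_0$ maps $S_+$ onto the whole data space $H^{-{1\over2},-{1\over4}}(\Sigma)$, with the lower bound \eqref{p1'a}. Everything will be obtained by duality against the adjoint backward problem, and the only genuinely analytic ingredient is the following estimate, which I regard as the main obstacle: for $f\in L^2(Q)$ the backward problem
\[
(-\pd_t-\Delta)\phi=f\ \text{ in }\ Q,\qquad \phi_{|\Sigma}=0,\qquad \phi_{|\Omega_-}=0
\]
has a unique solution $\phi_f\in H^{2,1}(Q)$ obeying $\norm{\pd_\nu\phi_f}_{H^{{1\over2},{1\over4}}(\Sigma)}\leq C\norm{f}_{L^2(Q)}$. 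This rests on maximal $L^2$-regularity for the heat equation with $L^2$ right-hand side and homogeneous boundary and Cauchy data (a forward problem after the reflection $t\mapsto T-t$) together with the anisotropic trace theorem $H^{2,1}(Q)\to H^{{1\over2},{1\over4}}(\Sigma)$ for the normal derivative, both furnished by \cite{LM2}. The remaining steps are soft.

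Granting this, I would first prove \eqref{p1'a}. The basic tool is the Green formula underlying Proposition~\ref{p1}: for $u\in H_+$ and $\phi\in H^{2,1}(Q)$,
\[
\int_Q(\pd_t-\Delta)u\,\phi\,\d x\,\d t-\int_Q u\,(-\pd_t-\Delta)\phi\,\d x\,\d t=\left\langle\tau_0u,\pd_\nu\phi\right\rangle-\left\langle\tau_1u,\phi_{|\Sigma}\right\rangle+\left\langle r_-u,\phi_{|\Omega_-}\right\rangle-\left\langle r_+u,\phi_{|\Omega_+}\right\rangle,
\]
where each pairing is legitimate by the mapping properties \eqref{p1b} tested against the traces of $\phi\in H^{2,1}(Q)$; the identity holds for $u\in\mathcal C^\infty(\overline Q)$ and extends by density of $\mathcal C^\infty(\overline Q)$ in $H_+$. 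For $u\in S_+$ one has $(\pd_t-\Delta)u=0$ and $r_+u=0$, and taking $\phi=\phi_f$ annihilates the last three pairings because $\phi_{|\Sigma}=0$ and $\phi_{|\Omega_-}=0$; hence
\[
\abs{\int_Q u\,f\,\d x\,\d t}=\abs{\left\langle\tau_0u,\pd_\nu\phi_f\right\rangle}\leq\norm{\tau_0u}_{H^{-{1\over2},-{1\over4}}(\Sigma)}\norm{\pd_\nu\phi_f}_{H^{{1\over2},{1\over4}}(\Sigma)}\leq C\norm{\tau_0u}_{H^{-{1\over2},-{1\over4}}(\Sigma)}\norm{f}_{L^2(Q)}.
\]
Taking the supremum over $f$ in the unit ball of $L^2(Q)$ yields \eqref{p1'a}, and in particular $\tau_0u=0\Rightarrow u=0$.

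It remains to show $\tau_0S_+=H^{-{1\over2},-{1\over4}}(\Sigma)$. Given $g\in H^{-{1\over2},-{1\over4}}(\Sigma)$, the estimate above makes $f\mapsto-\left\langle g,\pd_\nu\phi_f\right\rangle$ a bounded functional on $L^2(Q)$, so Riesz' theorem produces $u_g\in L^2(Q)$ with $\int_Q u_g f\,\d x\,\d t=-\left\langle g,\pd_\nu\phi_f\right\rangle$ for all $f$ and $\norm{u_g}_{L^2(Q)}\leq C\norm{g}_{H^{-{1\over2},-{1\over4}}(\Sigma)}$. Testing with $f=(-\pd_t-\Delta)\psi$, $\psi\in\mathcal C_0^\infty(Q)$ (so that $\phi_f=\psi$ and $\pd_\nu\psi_{|\Sigma}=0$), gives $(\pd_t-\Delta)u_g=0$ in $Q$, whence $u_g\in H_+$. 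Re-inserting $u_g$ into the Green formula and comparing with the defining identity leaves $\left\langle g-\tau_0u_g,\pd_\nu\phi\right\rangle+\left\langle r_+u_g,\phi_{|\Omega_+}\right\rangle=0$ for every $\phi\in H^{2,1}(Q)$ with $\phi_{|\Sigma}=0$ and $\phi_{|\Omega_-}=0$; choosing such $\phi$ via the extension \eqref{p1d}--\eqref{p1e} with prescribed normal derivative $h\in H^{{1\over2},{1\over4}}(\Sigma)$ and vanishing initial trace, then with vanishing normal derivative and prescribed initial trace, forces $\tau_0u_g=g$ in $H^{-{1\over2},-{1\over4}}(\Sigma)$ and $r_+u_g=0$ in $H^{-1}(\Omega)$, i.e. $u_g\in S_+$. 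Thus $H^{-{1\over2},-{1\over4}}(\Sigma)\subseteq\tau_0S_+\subseteq\mathcal H_+\subseteq H^{-{1\over2},-{1\over4}}(\Sigma)$, the last two inclusions being trivial since $S_+\subset\{u\in H_+:r_+u=0\}$ and $\tau_0$ is bounded by \eqref{p1b}; equality throughout gives $\tau_0S_+=\mathcal H_+=H^{-{1\over2},-{1\over4}}(\Sigma)$. The statements for $S_-$ and $\mathcal H_-$ follow identically after the reflection $t\mapsto T-t$, which interchanges $\pd_t-\Delta$ with $-\pd_t-\Delta$ and the roles of $\Omega_+$ and $\Omega_-$.
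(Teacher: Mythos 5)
Your proof is correct and follows essentially the same route as the paper: the same key ingredient (unique solvability in $H^{2,1}(Q)$ of the backward problem $(-\partial_t-\Delta)w=F$ with zero lateral and final data, together with the trace bound $\norm{\partial_\nu w_F}_{H^{{1\over2},{1\over4}}(\Sigma)}\le C\norm{F}_{L^2(Q)}$, which the paper takes from \cite[Theorem 1.43]{Ch}), the same Riesz-representation duality producing $u_g$, and the same identification of $(\partial_t-\Delta)u_g=0$, $r_+u_g=0$ and $\tau_0u_g=g$ by testing against suitable $H^{2,1}$ extensions, with the $S_-$ case handled by time reflection. The only differences are cosmetic: you establish \eqref{p1'a} before surjectivity and write out the full Green formula explicitly, whereas the paper argues directly from the duality definitions of the trace maps in Proposition \ref{p1} and notes that \eqref{p1'a} follows from \eqref{p1'b}.
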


\begin{proof} We already know  that $\tau_0S_\pm\subset\mathcal H_\pm\subset H^{-{1\over 2},-{1\over4}}(\Sigma)$. Then we have only to prove the reverse inclusions. To do that,  fix $g\in H^{-{1\over 2},-{1\over4}}(\Sigma)$ and, for $F\in L^2(Q)$, consider the IBVP
\begin{align*}
\left\{ 
\begin{array}{ll} 
(-\partial_t-\Delta )w =  F \;\; \mbox{in}\; Q ,
\\ 
w_{|\Omega _-\cup \Sigma}= 0.
\end{array}
\right.
\end{align*}
According to \cite[Theorem 1.43]{Ch}, this  IBVP  admits a unique solution $w=w_F\in H^{2,1}(Q)$ so that
\begin{equation}\label{p1'b}
\norm{\tau_1w_F}_{H^{{1\over 2},{1\over4}}(\Sigma)}\leq C\norm{w_F}_{H^{2,1}(Q)}\leq C\norm{F}_{L^2(Q)}.
\end{equation}
Thus, the linear form on $L^2(Q)$ given by 
$$F\in L^2(Q)\mapsto-\left\langle g,\tau_1 w_F\right\rangle $$ 
is continuous. Therefore, according to Riesz's representation theorem, there exists $u\in L^2(Q)$ such that
$$
(u,F)=-\left\langle g,\tau_1 w_F\right\rangle .
$$
Here and henceforth $(\cdot ,\cdot)$ is the usual scalar product on $L^2(Q)$. 

\smallskip
Taking $F=(-\pd_t-\Delta )w$ for some $w\in \mathcal C^\infty_0(Q)$, we get $(\pd_t-\Delta) u=0$ on $Q$. On the other hand, the choice of $F=(-\pd_t-\Delta) w$, where $w=E(0,0,w_+,0)\in H^{2,1}(Q)$ with $(0,0,w_+,0)\in \mathcal{N}$, yields $r_+u=0$ and then $u\in S_+$. Finally,  $F=(-\pd_t-\Delta) w$, for some $w=E(0,g_1,0,0)\in H^{2,1}(Q)$ with $(0,g_1,0,0)\in \mathcal{N}$, gives $\tau_0u=g$. In other words, we proved that $g\in \tau_0S_+$ and consequently $\tau_0S_+=\mathcal H_\pm=H^{-{1\over 2},-{1\over4}}(\Sigma)$. We complete the proof by noting that \eqref{p1'a} follows from \eqref{p1'b} and the analysis we carried out for $S_+$ can be adapted with slight modifications to $S_-$.
\end{proof}

For $\epsilon =\pm $, consider the IBVP
\begin{equation}\label{eq1}
\left\{\begin{array}{ll}
(\epsilon \partial_t-\Delta +q(x,t))u=0\quad \textrm{in}\ Q,
\\
u_{|\Omega_\epsilon}=0,
\\  
u_{|\Sigma}=g .
\end{array}
\right.
\end{equation}

\begin{prop}\label{p2} 
For $g\in H^{-{1\over 2},-{1\over4}}(\Sigma)$ and  $q\in mB_{L^\infty(Q)}$, the IBVP \eqref{eq1} admits a unique transposition solution $u_{q,g}^\epsilon \in H_\epsilon$  satisfying 
\bel{p2a}
\norm{u_{q,g}^\epsilon}_{H_\epsilon}\leq C\norm{g}_{H^{-{1\over 2},-{1\over4}}(\Sigma)},
\ee
where the constant $C$ depends only on $Q$ and $m$. Additionally the parabolic DtN map \[ \Lambda_{q}: g\mapsto \tau_1u_{q,g}^+\] defines a bounded operator from $H^{-{1\over 2},-{1\over4}}(\Sigma)$ into
 $H^{-{3\over 2},-{3\over4}}(\Sigma)$.
\end{prop}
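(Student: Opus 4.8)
The plan is to adapt, to a nonzero potential $q$, the transposition argument already carried out for $q=0$ in the proof of Proposition \ref{p1'}. I would treat only $\epsilon=+$; the case $\epsilon=-$ is recovered from it by the time reversal $t\mapsto T-t$, which exchanges $\Omega_+$ and $\Omega_-$, turns $-\pd_t$ into $\pd_t$, and replaces $q(\cdot,t)$ by $q(\cdot,T-t)\in mB_{L^\infty(Q)}$. The solution is defined through the backward adjoint IBVP
\begin{equation*}
(-\pd_t-\Delta +q)w=F\ \text{in}\ Q,\qquad w_{|\Omega_-\cup\Sigma}=0,\qquad F\in L^2(Q).
\end{equation*}
The first ingredient I need is that this problem is well posed with $w=w_F\in H^{2,1}(Q)$ and $\norm{w_F}_{H^{2,1}(Q)}\leq C\norm{F}_{L^2(Q)}$, $C$ depending only on $Q$ and $m$. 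For $q=0$ this is \cite[Theorem 1.43]{Ch}; for $q\in mB_{L^\infty(Q)}$ I would deduce it from the same parabolic regularity theory after the change of variable $t\mapsto T-t$, treating $qw$ as a bounded lower order term and absorbing it by combining the maximal regularity estimate for $-\pd_t-\Delta$ with a Gronwall energy estimate, so that the constant stays uniform over the ball $mB_{L^\infty(Q)}$. Combined with Proposition \ref{p1}, this yields $\norm{\tau_1 w_F}_{H^{{1\over2},{1\over4}}(\Sigma)}\leq C\norm{F}_{L^2(Q)}$.

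Granting this, I would define $u=u^+_{q,g}$ exactly as in Proposition \ref{p1'}. The form $F\mapsto-\langle g,\tau_1 w_F\rangle$ is continuous on $L^2(Q)$ because
\begin{equation*}
\abs{\langle g,\tau_1 w_F\rangle}\leq \norm{g}_{H^{-{1\over2},-{1\over4}}(\Sigma)}\norm{\tau_1 w_F}_{H^{{1\over2},{1\over4}}(\Sigma)}\leq C\norm{g}_{H^{-{1\over2},-{1\over4}}(\Sigma)}\norm{F}_{L^2(Q)},
\end{equation*}
so Riesz's theorem produces a unique $u\in L^2(Q)$ with $(u,F)=-\langle g,\tau_1 w_F\rangle$ and $\norm{u}_{L^2(Q)}\leq C\norm{g}_{H^{-{1\over2},-{1\over4}}(\Sigma)}$. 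The three defining properties are then read off by specializing $F$. Taking $F=(-\pd_t-\Delta+q)w$ with $w\in\mathcal{C}^\infty_0(Q)$ forces $(\pd_t-\Delta+q)u=0$ in the distribution sense; since $qu\in L^2(Q)$ this gives $(\pd_t-\Delta)u=-qu\in L^2(Q)$, hence $u\in H_+$ and $\norm{u}_{H_+}\leq C\norm{g}_{H^{-{1\over2},-{1\over4}}(\Sigma)}$, which is \eqref{p2a}. Taking $w=E(0,0,w_+,0)$ and $w=E(0,g_1,0,0)$ will give $r_+u=0$ and $\tau_0u=g$ respectively.

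The bookkeeping that converts these choices into the boundary identities rests on the Green formula
\begin{multline*}
\int_Q(\pd_t-\Delta+q)u\,w\,\d x\,\d t-\int_Q u(-\pd_t-\Delta+q)w\,\d x\,\d t\\
=\langle\tau_0u,\tau_1w\rangle-\langle\tau_1u,\tau_0w\rangle+\langle r_-u,r_-w\rangle-\langle r_+u,r_+w\rangle,
\end{multline*}
valid for $u\in H_+$ and $w\in H^{2,1}(Q)$, which I would first check for $u,w\in\mathcal{C}^\infty(\overline{Q})$ and then extend using the density of $\mathcal{C}^\infty(\overline{Q})$ in $H_+$ and the trace bounds of Proposition \ref{p1}. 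For the special $w$'s above, the compatibility conditions \eqref{p1c} make the relevant traces vanish (in particular $r_+w=w_+\in H^1_0(\Omega)$ when $g_0=0$, which is what renders the $H^{-1}(\Omega)$–$H^1_0(\Omega)$ pairing $\langle r_+u,w_+\rangle$ meaningful), so the right-hand side collapses to one pairing that I match against the value $-\langle g,\tau_1 w_F\rangle$ fixed by the definition of $u$. Uniqueness follows from the same formula: if $u\in H_+$ solves \eqref{eq1} with $g=0$, then $r_+u=0$ and $\tau_0u=0$, so choosing $w=w_F$ (for which $r_-w=0$ and $\tau_0w=0$) leaves $(u,F)=0$ for every $F\in L^2(Q)$, whence $u=0$. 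Finally the boundedness of $\Lambda_q:g\mapsto\tau_1u^+_{q,g}$ from $H^{-{1\over2},-{1\over4}}(\Sigma)$ into $H^{-{3\over2},-{3\over4}}(\Sigma)$ is immediate by composing \eqref{p2a} with the trace bound $\tau_1:H_+\to H^{-{3\over2},-{3\over4}}(\Sigma)$ of Proposition \ref{p1}.

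I expect the main obstacle to be the very first step: the $H^{2,1}(Q)$ well-posedness of the adjoint problem with a constant uniform in $q\in mB_{L^\infty(Q)}$, together with the rigorous justification of the Green formula and of the boundary pairings; once these are in place, everything else is a direct transcription of the $q=0$ computations of Proposition \ref{p1'}.
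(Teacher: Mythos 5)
Your argument is sound, but it follows a genuinely different route from the paper's. You rerun the whole transposition machinery of Proposition \ref{p1'} with the potential $q$ built into the backward adjoint problem: uniform-in-$q$ well-posedness of $(-\pd_t-\Delta+q)w=F$ with zero data on $\Omega_-\cup\Sigma$, Riesz representation, specialization of $F$, and a Green formula on $H_+\times H^{2,1}(Q)$ to recover the equation, the vanishing initial trace, the lateral trace $g$, and uniqueness. The paper instead uses a superposition trick that takes Proposition \ref{p1'} as a black box: it picks $G\in S_+$ with $\tau_0G=g$ and $\norm{G}_{L^2(Q)}\leq C\norm{g}_{H^{-{1\over 2},-{1\over4}}(\Sigma)}$, solves the \emph{forward} problem $(\pd_t-\Delta+q)w=-qG$, $w_{|\Omega_+\cup\Sigma}=0$, strongly in $H^{2,1}(Q)$ via \cite[Theorem 1.43]{Ch}, and sets $u_{q,g}^+=G+w$; then \eqref{p2a} follows from \eqref{0.1}, and the boundedness of $\Lambda_q$ from Proposition \ref{p1}, with no new duality argument and no Green formula for rough $u$. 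Note that the obstacle you single out as the main one is actually moot: Theorem 1.43 of \cite{Ch}, as the paper uses it, already covers bounded potentials (the paper applies it directly to $\pd_t-\Delta+q$), so no absorption or Gronwall argument is needed. What your version buys is self-containedness: it makes the definition of ``transposition solution'' explicit and delivers uniqueness transparently, whereas the paper leaves implicit the verification that $G+w$ satisfies the transposition identity. One caveat on your Green formula: as stated for arbitrary $w\in H^{2,1}(Q)$ it is not quite well defined, since the pairing $\left\langle \tau_1u,\tau_0w\right\rangle$ requires $\tau_0w\in H^{{3\over2},{3\over4}}_{,0}(\Sigma)$ and the pairings $\left\langle r_\pm u,r_\pm w\right\rangle$ require $r_\pm w\in H_0^1(\Omega)$; you acknowledge the latter, and all the $w$'s you actually use have $\tau_0w=0$ and compatible corner values, so the proof goes through, but the formula should be stated only for that restricted class of $w$.
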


 \begin{proof} We give the proof for $\epsilon=+$. The case $\epsilon =-$ can be treated similarly. By Proposition \ref{p1'}, there exists a unique  $G\in S_+$ such that $\tau_0G=g$ and  
 $$
 \norm{G}_{L^2(Q)}\leq C\norm{g}_{H^{-{1\over 2},-{1\over4}}(\Sigma)}.
 $$ 
Consider the IBVP
\begin{align*}
\left\{ 
\begin{array}{ll} 
(\partial_t-\Delta +q)w= -qG\quad \mbox{in}\; Q ,
\\ 
w_{|\Omega_+\cup \Sigma }=0.
 \end{array}\right.
\end{align*}
As $-qG\in L^2(Q)$,   we get from \cite[Theorem 1.43]{Ch} that this  IBVP has a unique solution $w\in H^{2,1}(Q)$ satisfying
\begin{equation}\label{0.1}
\norm{w}_{H^{2,1}(Q)} \leq C\norm{qG}_{L^2(Q)}\leq C\norm{q}_{L^\infty(Q)}\norm{g}_{H^{-{1\over 2},-{1\over4}}(\Sigma)} .
\end{equation}
Hence, $u_{q,g}^+=w+G\in H_+$ is the unique transposition solution of \eqref{eq1} and \eqref{p2a} follows from \eqref{0.1}. 

\smallskip
Now, according to Proposition \ref{p1}, we have $\tau_1u_{q,g}^+\in H^{-{3\over 2},-{3\over4}}(\Sigma)$  with
\[
\begin{aligned}\norm{\tau_1u_{q,g}^+}^2_{H^{-{3\over 2},-{3\over4}}(\Sigma)}\leq C\norm{u_{q,g}^+}^2_{H_+}&=C\left(\norm{u_{q,g}^+}^2_{L^2(Q)}+\norm{qu_{q,g}^+}^2_{L^2(Q)}\right)\\
\ &\leq C\left(1+\norm{q}^2_{L^\infty(Q)}\right)\norm{u_{q,g}^+}_{L^2(Q)}^2,
\end{aligned}
\]
which, in combination with \eqref{p2a}, entails 
\[
\Lambda_q: H^{-{1\over 2},-{1\over4}}(\Sigma)\rightarrow H^{-{3\over 2},-{3\over4}}(\Sigma): g\mapsto \tau_1u_{q,g}^+
\]
defines a bounded operator.
\end{proof}

The identity in the following proposition will be very useful in our analysis.

\begin{prop}\label{p3} 
Fix $m>0$ and let $q, \widetilde{q}\in mB_{L^\infty(Q)}$. Then $\Lambda_q-\Lambda_{\widetilde{q}}$ is a bounded operator from $H^{-{1\over 2},-{1\over4}}(\Sigma)$ into  $H^{{1\over 2},{1\over4}}(\Sigma)$ and
\begin{equation}\label{p3a} 
\left\langle(\Lambda_{q}-\Lambda_{\widetilde{q}})g, h  \right\rangle =\int_Q(q-\widetilde{q})u_{q,g}^+u_{\widetilde{q},h}^-dxdt ,\;\; g,h\in  H^{-{1\over 2},-{1\over4}}(\Sigma).
\end{equation}
\end{prop}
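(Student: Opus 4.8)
The plan is to reduce the identity to an application of the parabolic Green formula after introducing an auxiliary function that carries the information of $\Lambda_q-\Lambda_{\widetilde q}$. First I would set $v=u_{q,g}^+-u_{\widetilde q,g}^+$, where both terms are the transposition solutions provided by Proposition \ref{p2}. Since both solve the forward problem with the same Cauchy data $g$ and the same vanishing trace on $\Omega_+$, the difference satisfies $r_+v=0$ and $\tau_0 v=0$, while a direct computation using $(\partial_t-\Delta+q)u_{q,g}^+=0$ and $(\partial_t-\Delta+\widetilde q)u_{\widetilde q,g}^+=0$ gives $(\partial_t-\Delta+\widetilde q)v=(\widetilde q-q)u_{q,g}^+$. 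The right-hand side belongs to $L^2(Q)$ because $u_{q,g}^+\in L^2(Q)$ and $q,\widetilde q\in L^\infty(Q)$, so \cite[Theorem 1.43]{Ch} yields $v\in H^{2,1}(Q)$ together with the bound $\norm{v}_{H^{2,1}(Q)}\le C\norm{q-\widetilde q}_{L^\infty(Q)}\norm{u_{q,g}^+}_{L^2(Q)}\le C\norm{g}_{H^{-{1\over2},-{1\over4}}(\Sigma)}$. Since $\tau_1 v=\tau_1u_{q,g}^+-\tau_1u_{\widetilde q,g}^+=(\Lambda_q-\Lambda_{\widetilde q})g$ and the normal-derivative trace of an $H^{2,1}(Q)$ function lies in $H^{{1\over2},{1\over4}}(\Sigma)$ (consistently with the $H_+$-trace of Proposition \ref{p1}), this already proves that $\Lambda_q-\Lambda_{\widetilde q}$ is bounded from $H^{-{1\over2},-{1\over4}}(\Sigma)$ into $H^{{1\over2},{1\over4}}(\Sigma)$.

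For the identity \eqref{p3a}, I would pair $v$ with $w=u_{\widetilde q,h}^-\in H_-$, which solves $(-\partial_t-\Delta+\widetilde q)w=0$ in the $L^2$ sense, with $r_-w=0$ and $\tau_0 w=h$. Applying Green's formula to $v$ and $w$, the two zero-order terms involving $\widetilde q$ cancel and the second-order identity reads
\[
\int_Q (\partial_t-\Delta+\widetilde q)v\,w\,dx\,dt=\int_{\Omega_-}vw\,dx-\int_{\Omega_+}vw\,dx+\int_\Sigma\bigl(v\,\partial_\nu w-\partial_\nu v\,w\bigr)\,d\sigma\,dt .
\]
Using $\tau_0 v=0$, $r_+v=0$ and $r_-w=0$, the two Cauchy terms and the term $\int_\Sigma v\,\partial_\nu w$ vanish, leaving the left-hand side equal to $-\langle\tau_1 v,\tau_0 w\rangle=-\langle(\Lambda_q-\Lambda_{\widetilde q})g,h\rangle$. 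Since $(\partial_t-\Delta+\widetilde q)v=(\widetilde q-q)u_{q,g}^+$, the left-hand side equals $\int_Q(\widetilde q-q)u_{q,g}^+u_{\widetilde q,h}^-\,dx\,dt$, and rearranging produces exactly \eqref{p3a}.

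The delicate point is the rigorous justification of this Green's formula, because $w$ is only a transposition solution in $H_-$, so its traces $\tau_0 w$, $\tau_1 w$ and $r_-w$ live in negative order spaces and the surface integrals must be read as duality pairings. To handle this I would invoke the density of $\mathcal C^\infty(\overline Q)$ in $H_-$ established at the beginning of this section, approximate $w$ by smooth $w_k\to w$ in $H_-$, write the classical Green formula for the smooth pair $(v,w_k)$, and pass to the limit. Each limit is controlled by the continuity of the trace operators of Proposition \ref{p1}: $\tau_0 w_k\to h$ in $H^{-{1\over2},-{1\over4}}(\Sigma)$ pairs against $\tau_1 v\in H^{{1\over2},{1\over4}}(\Sigma)$, $r_-w_k\to 0$ in $H^{-1}(\Omega)$ pairs against $r_-v\in H^1(\Omega)$, and $(-\partial_t-\Delta)w_k\to(-\partial_t-\Delta)w$ in $L^2(Q)$ pairs against $v\in L^2(Q)$; the remaining terms vanish identically because the relevant trace of $v$ is zero. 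This passage to the limit is the main obstacle, and it is precisely where the regularity $v\in H^{2,1}(Q)$ gained in the first step, giving $\tau_1 v\in H^{{1\over2},{1\over4}}(\Sigma)$, is essential to make sense of the pairing with $h\in H^{-{1\over2},-{1\over4}}(\Sigma)$.
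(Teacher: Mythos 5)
Your proof is correct, and its first half is exactly the paper's argument: the difference $v$ of the two forward solutions solves an IBVP with $L^2$ source $(\widetilde q-q)u^+_{q,g}$ and zero Cauchy data, so \cite[Theorem 1.43]{Ch} gives $v\in H^{2,1}(Q)$, and the $H^{2,1}$ trace theorem gives $\tau_1 v=(\Lambda_q-\Lambda_{\widetilde q})g\in H^{{1\over2},{1\over4}}(\Sigma)$ with the stated bound. Where you genuinely diverge is the density argument for \eqref{p3a}. The paper approximates in the boundary datum: it proves the identity for $h=\tau_0 H$ with $H\in C^\infty(\overline Q)\cap S_-$, for which $u^-_{\widetilde q,h}=w+H$ with $w\in H^{2,1}(Q)$, so the integration by parts is classical; it then extends to all $h\in H^{-{1\over2},-{1\over4}}(\Sigma)$ using that $\tau_0\left(C^\infty(\overline Q)\cap S_-\right)$ is dense (Proposition \ref{p1'}) together with the continuity of both sides of \eqref{p3a} in $h$, which comes from \eqref{p2a}. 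You instead fix $h$ arbitrary and approximate the backward transposition solution $w=u^-_{\widetilde q,h}$ itself by smooth $w_k\to w$ in $H_-$, passing to the limit term by term in Green's formula via the trace continuity of Proposition \ref{p1}. Both routes rest on tools the paper provides; yours trades the paper's reliance on the density of smooth solutions inside $S_-$ (asserted but not proved in detail there) for the density of $C^\infty(\overline Q)$ in $H_-$ (stated with references at the start of Section 2), at the price of a more delicate term-by-term limit, and it has the mild advantage of not needing the continuity-in-$h$ step at all. One small precision to add: the limit $\int_{\Omega_-}v\,w_k\,dx\to\left\langle r_-w,\,v(\cdot,T)\right\rangle$ is an $H^{-1}(\Omega)$--$H^1_0(\Omega)$ duality, so you should note that $r_-v=v(\cdot,T)$ lies in $H^1_0(\Omega)$, not merely $H^1(\Omega)$; this indeed holds because $v\in H^{2,1}(Q)\hookrightarrow C([0,T];H^1(\Omega))$ and $v_{|\Sigma}=0$.
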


\begin{proof} It is straightforward to check that $u=u_{\widetilde{q},g}^+-u_{q,g}^+$ is the solution of the IBVP
$$
\left\{ 
\begin{array}{ll} (\partial_t-\Delta +\widetilde{q})u=(q-\widetilde{q})u_{q,g}^+\;\;  \mbox{in}\; Q ,
\\ 
u_{|\Omega _+\cup \Sigma}=0.
 \end{array}
 \right.
 $$
Therefore, $u\in H^{2,1}(Q)$ and by the trace theorem \cite[Theorem 2.1, page 9]{LM2} $(\Lambda_q-\Lambda_{\widetilde{q}})g=\pd_\nu u\in H^{{1\over 2},{1\over4}}(\Sigma)$. Additionally \eqref{p2a} implies
\begin{align*}
\norm{(\Lambda_q-\Lambda_{\widetilde{q}})g}_{H^{{1\over 2},{1\over4}}(\Sigma)}=\norm{\pd_\nu u}_{H^{{1\over 2},{1\over4}}(\Sigma)}&\leq C\norm{u}_{H^{2,1}(Q)}\\&\leq C\norm{(q-\widetilde{q})u_{q,g}^+}_{L^2(Q)}\\ &\leq C\norm{g}_{H^{-{1\over 2},-{1\over4}}(\Sigma)},
\end{align*}
where $C$ is a generic constant depending only on $Q$ and $m$.

\smallskip
Let $h=\tau_0H$, where $H\in \mathcal C^\infty(\overline{Q})\cap S_-$. We find by making  integrations by parts

\begin{align}
-\int_\Sigma\pd_\nu uh\, d\sigma dt&=\int_Q(\pd_t-\Delta+\widetilde{q})uu_{\widetilde{q},h}^-dxdt-\int_Qu(-\pd_t-\Delta +\widetilde{q})u_{\widetilde{q},h}^-dxdt\label{p3b}
\\
&=\int_Q(q-\widetilde{q})u_{q,g}^+u_{\widetilde{q},h}^-dxdt. \nonumber
\end{align}

As $\mathscr{S}_-=C^\infty(\overline{Q})\cap S_-$  is dense in $S_-$, $\tau_0\mathscr{S}_-$ is dense in $H^{-{1\over 2},-{1\over4}}(\Sigma)$ according to Proposition \ref{p1'}. Whence \eqref{p3a} is deduced from \eqref{p3b} by density. 
\end{proof}


\section{A Carleman inequalities}

We establish in this section a parabolic version of an elliptic Carleman inequality due to Bukgheim and Uhlmann \cite{BU}. This inequality is used in an essential way in constructing CGO solutions vanishing at a part of the lateral boundary.

\smallskip
In this section $\epsilon =\pm$ and, for $\omega \in \mathbb{S}^{n-1}$ and $\rho\ge 0$, 
\[
\varphi_{\epsilon, \omega ,\rho} (x,t)=e^{- \epsilon 2(\rho\omega\cdot x+\rho^2 t)},\;\; (x,t)\in \overline{Q},
\]
and $\psi_{\epsilon, \omega ,\rho}=\sqrt{\varphi_{\epsilon, \omega ,\rho}}$. 

\smallskip
Observe that $\psi_{\epsilon, \omega ,\rho}$ satisfies
\[
(\epsilon \partial _t-\Delta )\psi_{\epsilon, \omega ,\rho}=0,\;\; (x,t)\in \overline{Q}.
\]

\begin{Thm}\label{t2}  
There exists a constant $C>0$ depending only on $Q$ with the property that, for any $m>0$, we find $\rho_0>0$, depending only on $Q$ and $m$ so that, for any $q\in mB_{L^\infty(Q)}$, $\rho \ge \rho_0$ and $u\in C^2(\overline{Q})$ satisfying $u=0$ on $\Sigma \cup \Omega _\epsilon$,

\begin{align}
\int_{\Omega_{-\epsilon}} \varphi_{\epsilon ,\omega ,\rho}\abs{u}^2dx&+\rho\int_{\Sigma_{\epsilon,\omega}}\varphi_{\epsilon , \omega ,\rho}\abs{\partial_\nu u}^2\abs{\omega\cdot\nu } d\sigma dt+\rho^2\int_Q\varphi_{\epsilon ,\omega ,\rho}\abs{u}^2dxdt \label{t2b}
\\
&\le C\left(\int_Q\varphi_{\epsilon ,\omega ,\rho}\abs{(\epsilon\partial_t-\Delta+q)u}^2dxdt+\rho\int_{\Sigma_{-\epsilon,\omega}}\varphi_{\epsilon ,\omega ,\rho}\abs{\partial_\nu u}^2\abs{\omega\cdot\nu }d\sigma dt\right). \nonumber
\end{align}
\end{Thm}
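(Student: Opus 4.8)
The plan is to conjugate by the weight and reduce to a clean first–order perturbation of the Laplacian. Since $\varphi_{\epsilon,\omega,\rho}=\psi_{\epsilon,\omega,\rho}^2$, I set $v=\psi_{\epsilon,\omega,\rho}u$; using $u=0$ on $\Sigma$ one checks $\varphi_{\epsilon,\omega,\rho}|\partial_\nu u|^2=|\partial_\nu v|^2$ on $\Sigma$, while $\varphi_{\epsilon,\omega,\rho}|u|^2=|v|^2$ on $\Omega_{-\epsilon}$ and on $Q$. A direct computation gives the conjugated operator
\[
\psi_{\epsilon,\omega,\rho}(\epsilon\partial_t-\Delta)(\psi_{\epsilon,\omega,\rho}^{-1}v)=-\Delta v+Lv,\qquad Lv:=\epsilon\partial_t v-2\epsilon\rho\,\omega\cdot\nabla v,
\]
with no zero order term, and $v=0$ on $\Sigma\cup\Omega_\epsilon$. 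Thus \eqref{t2b} is equivalent to the same inequality written for $v$, with $(\epsilon\partial_t-\Delta+q)u$ replaced by $(-\Delta+L+q)v$ and all the weights removed. I would first establish this inequality for $q=0$, and only at the end reinstate $q$: from $\int_Q|(-\Delta+L)v|^2\le 2\int_Q|(-\Delta+L+q)v|^2+2m^2\int_Q|v|^2$ the extra term $2m^2\int_Q|v|^2$ is absorbed by the left–hand side term $\rho^2\int_Q|v|^2$ as soon as $\rho\ge\rho_0$, with $\rho_0$ depending only on $m$ and on the ($Q$–dependent) constant of the $q=0$ estimate.

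The core is an integration–by–parts identity for $-\Delta+L$. Expanding
\[
\int_Q|(-\Delta+L)v|^2=\int_Q|\Delta v|^2+\int_Q|Lv|^2-2\int_Q(\Delta v)(Lv),
\]
the cross term $-2\int_Q(\Delta v)(Lv)=-2\epsilon\int_Q(\Delta v)\partial_t v+4\epsilon\rho\int_Q(\Delta v)(\omega\cdot\nabla v)$ is purely a boundary contribution, because $-\Delta$ and $L$ have constant coefficients and hence commute. Integrating the first piece by parts in $t$ and using $v_{|\Sigma}=0$, $v_{|\Omega_\epsilon}=0$ yields the face term $\int_{\Omega_{-\epsilon}}|\nabla v|^2$; a Rellich–type integration by parts in $x$ in the second piece, combined with the fact that $\nabla v=(\partial_\nu v)\nu$ on $\Sigma$ (again because $v_{|\Sigma}=0$), yields $2\epsilon\rho\int_\Sigma|\partial_\nu v|^2(\omega\cdot\nu)$. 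Splitting $\Sigma$ according to the sign of $\omega\cdot\nu$ and keeping track of the sign of $\epsilon$, this equals $2\rho\int_{\Sigma_{\epsilon,\omega}}|\partial_\nu v|^2|\omega\cdot\nu|-2\rho\int_{\Sigma_{-\epsilon,\omega}}|\partial_\nu v|^2|\omega\cdot\nu|$ for both choices of $\epsilon$. I therefore expect the identity
\[
\int_Q|\Delta v|^2+\int_Q|Lv|^2+\int_{\Omega_{-\epsilon}}|\nabla v|^2+2\rho\int_{\Sigma_{\epsilon,\omega}}|\partial_\nu v|^2|\omega\cdot\nu|=\int_Q|(-\Delta+L)v|^2+2\rho\int_{\Sigma_{-\epsilon,\omega}}|\partial_\nu v|^2|\omega\cdot\nu|,
\]
in which every term on the left is nonnegative.

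It then remains to recover the three terms of the desired left–hand side. The boundary term $\rho\int_{\Sigma_{\epsilon,\omega}}|\partial_\nu v|^2|\omega\cdot\nu|$ is immediate. For the face term, $v_{|\Omega_{-\epsilon}}$ vanishes on $\partial\Omega$ (since $v_{|\Sigma}=0$), so Poincar\'e's inequality on $\Omega$ gives $\int_{\Omega_{-\epsilon}}|v|^2\le C\int_{\Omega_{-\epsilon}}|\nabla v|^2$, controlled by the identity. The delicate point — the step I expect to be the main obstacle — is the interior term $\rho^2\int_Q|v|^2$, which must be bounded by $\int_Q|Lv|^2$. Here $Lv=\pm(\partial_t v-2\rho\,\omega\cdot\nabla v)$ is the derivative of $v$ along the constant space–time field $X=(1,-2\rho\omega)$, whose characteristics are nearly horizontal for large $\rho$. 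Each characteristic segment contained in $\overline Q$ has $t$–length at most $\mathrm{diam}(\Omega)/(2\rho)$ and, because $X$ is increasing in $t$, has at least one endpoint on $\Sigma\cup\Omega_\epsilon$, where $v=0$. Integrating $Lv$ along characteristics from that endpoint, applying Cauchy–Schwarz along each characteristic, and then integrating over the measure–preserving foliation of $Q$ by these lines, I expect to obtain $\int_Q|v|^2\le(\mathrm{diam}(\Omega)/2\rho)^2\int_Q|Lv|^2$ — exactly the two gained powers of $\rho^{-1}$.

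Combining these three bounds with the identity gives the $q=0$ estimate for $v$ with a constant depending only on $Q$; reinstating $q$ as in the first step fixes $\rho_0$ in terms of $Q$ and $m$, and undoing the conjugation $v=\psi_{\epsilon,\omega,\rho}u$ restores the weights and yields \eqref{t2b}. The only genuinely non–routine ingredient is the transport (Poincar\'e–along–characteristics) estimate producing $\rho^2\int_Q|v|^2$; the rest is careful but standard integration by parts and boundary bookkeeping.
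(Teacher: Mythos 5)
Your proposal is correct and follows essentially the same route as the paper: conjugate by $\psi_{\epsilon,\omega,\rho}$, split the conjugated operator into $-\Delta$ plus the first-order part $L$, compute the cross term by Green/Rellich integrations by parts to produce the face term $\int_{\Omega_{-\epsilon}}|\nabla v|^2$ and the signed boundary term, and absorb $q$ by taking $\rho_0\sim\sqrt{C}\,m$. Your ``Poincar\'e along characteristics'' estimate $\rho^2\int_Q|v|^2\le C\int_Q|Lv|^2$ is exactly the paper's Lemma \ref{l2}, which the paper proves by the same transport idea (integrating along the direction $\eta\propto(-2\rho\omega,1)$, after a reflection/density reduction), so the two proofs coincide in substance.
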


\begin{proof}
It is enough to give the proof in the case of real-valued functions. We consider the case $\epsilon=+$. The case $\epsilon =-$ can be treated similarly.
Let $u\in C^2(\overline{Q})$ satisfying $u=0$ on $\Sigma \cup \Omega _+$ and set $v=\psi _{+,\omega ,\rho} u$. Straightforward computations give
\begin{equation}\label{c1c}
\psi_{+,\omega ,\rho}(\pd_t-\Delta ) u=P_{\omega ,\rho}v,\end{equation}
where 
$$
P_{\omega ,\rho}= \pd_t-\Delta - 2\rho\omega\cdot\nabla .
$$

In this proof, the symbol $\nabla$ denotes the gradient with respect to the variable $x$.

\smallskip
We split $P_{\omega ,\rho}$  into two terms 
\[
P_{\omega ,\rho}=-\Delta+ Q_{\omega ,\rho}\;\;  \mbox{with}\;\; Q_{\omega ,\rho}=\pd_t-2\rho\omega\cdot\nabla.
\]
 Hence
\[
\norm{P_{+,\rho,\omega}v}_{L^2(Q)}^2\geq \norm{Q_{\omega ,\rho}v}^2-2\left(\Delta v,Q_{\omega ,\rho}v\right).
\]
But
\[
-2\left(\Delta v,Q_{\omega ,\rho}v\right)_{L^2(Q)}=-2\int_Q\Delta v\pd_tvdxdt+2\rho\int_Q\Delta v\omega\cdot\nabla vdxdt.
\]
Applying Green's formula, we get
\[
-2\int_Q\Delta v\pd_tvdxdt=\int_Q\pd_t|\nabla v|^2dxdt=\int_{\Omega_-} |\nabla v|^2dx .
\]
On the other hand, we have from the proof of \cite[Lemma 2.1]{BU}
\[
2\int_Q\Delta v\omega\cdot\nabla vdxdt=\int_\Sigma |\pd_\nu v|^2\omega\cdot\nu d\sigma dt.
\]
Combining these formulas, we obtain
\begin{equation}\label{0.2}
\norm{P_{\omega ,\rho}v}_{L^2(Q)}^2\geq \int_{\Omega_-} |\nabla_x v|^2+\rho\int_\Sigma |\pd_\nu v|^2\omega\cdot\nu d\sigma dt+\norm{Q_{\omega ,\rho}v}^2_{L^2(Q)}.
\end{equation}

We need the following Poincar\'e type inequality to pursue the proof. This inequality is proved later in this text.

\begin{lem}
\label{l2} There exists a constant $C$, that can depend only on $\Omega$ so that, for any $\rho >2$ and $v\in  H^1(Q)$ satisfying $v=0$ on $\Sigma \cup \Omega_\epsilon$,
 \begin{equation}\label{l2b}
 \rho\norm{v}_{L^2(Q)}\leq C\norm{Q_{\omega ,\rho}v}_{L^2(Q)}.
 \end{equation}
 \end{lem}
 
Inequality \eqref{l2b} in \eqref{0.2} yields
\[ 
\int_{\Omega_-} | v|^2dx+\rho\int_{\Sigma_+} |\pd_\nu v|^2\omega\cdot\nu d\sigma dt+\rho^2\norm{v}^2_{L^2(Q)}\leq C_0\left(\norm{P_{\omega ,\rho}v}_{L^2(Q)}^2+\rho\int_{\Sigma_-} |\pd_\nu v|^2|\omega\cdot\nu| d\sigma dt\right).
\]
Here the constant $C_0$ depends only on $Q$. This gives \eqref{t2b} when $q=0$. For an arbitrary $q\in mB_{L^\infty (Q)}$, we have 
 \[
 \abs{\partial_tu-\Delta u }^2=\abs{\partial_tu-\Delta u+qu-qu}^2\le 2\abs{(\partial_t-\Delta+q)u}^2+2m^2\abs{u}^2.
 \]
 Fix $\rho_0>2\max(\sqrt{C_0}m,1)$. Then \eqref{t2b} follows with $C=2C_0$.
 \end{proof}

\begin{proof}[Proof of Lemma \ref{l2}] 
We consider the case $\epsilon =+$. The case $\epsilon =-$ is proved similarly.

\smallskip
Let $v\in  H^1(Q)$ satisfying $v=0$ on $\Sigma \cup \Omega_+$. A classical reflexion argument in $t$ with respect to $T$ shows that $v$ is the restriction to $Q$ of a function belonging to $H_0^1(\Omega \times (0,2T))$. Therefore, by density, it is enough to give the proof when $v\in C_0^\infty (\Omega \times (0,2T))$ that we consider in the sequel as a subset of $C^\infty_0(\Omega \times (0,+\infty ))$.

\smallskip
Let $v\in C^\infty_0(\Omega \times (0,+\infty ))$. If $\eta=(\eta_x ,\eta_t)\in\R^n \times\R$ is defined by
\[ \eta_x={-2\rho\omega \over \sqrt{1+4\rho^2}}, \quad \eta_t={1\over \sqrt{1+4\rho^2}},\]
then
\[
v(x,t)=\int_{-\infty}^0\pd_s v((x,t)+s\eta)ds,\quad (x,t)\in Q.
\]
Fix $R>0$ such that $\Omega\subset \{|x|\leq R;\; x\in\R^n\}$. For $s<-4R$, $x\in\Omega$ and $\rho>2$, we get
 \[|x+s\eta_x|\geq {-s\over2}-|x|>R.\]
Thus, for $s<-4R$ and $x\in\Omega$, $x+s\eta_x\notin \Omega$ yielding $ v((x,t)+s\eta)=0$. Therefore
\[|v(x,t)|^2\leq\abs{\int_{-4R}^0\pd_s v((x,t)+s\eta)ds}^2,\quad (x,t)\in Q.\]
Applying Cauchy-Schwarz's inequality, we obtain
\begin{align*}
\int_Q|v(x,t)|^2dxdt\le 4R\int_Q\int_{-4R}^0|\pd_s v((x,t)&+s\eta)|^2dsdxdt
\\
&={4R\over 1+4\rho^2}\int_Q\int_{-4R}^0|Q_{\omega ,\rho} v((x,t)+s\eta)|^2dsdxdt.
\end{align*}
We make the substitution $\tau=t+s\eta_t$, $y=x+s\eta_x$ and  we apply Fubini's theorem. We find
\begin{align*}
\int_Q|v(x,t)|^2dxdt &\leq{4R\over 1+4\rho^2}\int_{-4R}^0\int_{-\infty}^{T+s\eta_t}\int_{\R^n}|Q_{\omega ,\rho} v(y,\tau )|^2dyd\tau ds
\\
& \leq{16R^2\over 1+4\rho^2}\int_Q|Q_{\omega ,\rho}v(x,t)|^2dxdt.
\end{align*}
Here we used both the fact that $\eta_t>0$ and $\mbox{supp}(v)\subset (0,+\infty)\times\Omega$.
\end{proof}


\section{CGO solutions vanishing on a part of the lateral boundary}

With the help of the Carleman inequality in the previous section, we construct in this section CGO solutions vanishing at a part of the lateral boundary.

\smallskip
As in the preceding section, for $\omega \in \mathbb{S}^{n-1}$, $\epsilon =\pm$ and $\rho\ge 0$, 
\[
\varphi_{\epsilon ,\omega ,\rho} (x,t)=e^{- \epsilon 2(\rho\omega\cdot x + \rho^2 t)},\;\; (x,t)\in \overline{Q},
\]
and $\psi_{\epsilon ,\omega ,\rho} =\sqrt{\varphi_{\epsilon ,\omega ,\rho}}$. Set, where $\delta >0$ and $\omega \in \mathbb{S}^{n-1}$,
\[
\Gamma_{+,\omega ,\delta}=\{x\in\Gamma;\;  \nu(x)\cdot \omega >\delta\}
\]
and $\Sigma_{+,\omega ,\delta}=\Gamma_{+,\omega ,\delta }\times (0,T)$. 

\smallskip
Fix $\omega\in \mathbb S^{n-1}$,  $\xi\in \R^n$ with $\xi\cdot\omega=0$, $\tau\in\R$ and $\rho\ge \rho_0$, $\rho_0$ is as in Theorem \ref{t2}. Let $\zeta =(\xi ,\tau)$ and
\begin{align*}
&\theta_+(x,t)=\left(1-e^{-\rho^{{3\over4}}t}\right)e^{-i(x,t)\cdot \zeta}
\\
&\theta _-(x,t)= \left(1-e^{-\rho^{{3\over4}}(T-t)}\right).
\end{align*}

\begin{Thm}\label{t3}
Let $m>0$. There exists a constant $C>0$ depending only on $Q$, $m$ and $\delta$ so that, for any $q\in mB_{L^\infty (\Omega )}$, the equation
\[
(\pm \partial _t -\Delta +q)u=0\;\; \mbox{in}\; Q
\]
has a solution $u_{\pm ,q}\in \mathcal{H}_\pm$, satisfying $u_{\pm ,q}=0$ on  $\Sigma_{+,\mp\omega ,\delta}$, of the form
\begin{equation}\label{GO1}
u_{\pm ,q}=\psi_{\mp ,\omega ,\rho} \left( \theta _\pm +w_{\pm ,q}\right),
\end{equation}
where $w_{\pm ,q}\in H_{\pm}$ is such that
\begin{equation}\label{GO4}
\norm{w_{+ ,q}}_{L^2(Q)}\leq C(\rho^{-{1\over4}}+\rho^{-1}\langle \zeta \rangle^2),\quad \norm{w_{- ,q}}_{L^2(Q)}\leq C\rho^{-{1\over4}}.
\end{equation}
Here and henceforth $\langle \zeta \rangle=\sqrt{1+|\zeta |^2}$.
\end{Thm}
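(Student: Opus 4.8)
\noindent\emph{Proof strategy.}
I will construct $u_{+,q}$ in detail; $u_{-,q}$ is obtained in the same way (and is simpler, $\theta_-$ carrying no oscillatory factor). Abbreviate $\psi_-=\psi_{-,\omega,\rho}$ and write $P=P_{\omega,\rho}=\partial_t-\Delta-2\rho\,\omega\cdot\nabla$, with formal adjoint $P^{*}=-\partial_t-\Delta+2\rho\,\omega\cdot\nabla$.

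\emph{Step 1 (reduction and source estimate).} Looking for $u_{+,q}$ in the form \eqref{GO1}, i.e. $u_{+,q}=\psi_-(\theta_++w)$, and using the conjugation identity $(\partial_t-\Delta)(\psi_-v)=\psi_-Pv$ (a direct computation, already present in the proof of \thmref{t2}), the equation $(\partial_t-\Delta+q)u_{+,q}=0$ becomes $(P+q)w=f_+$ with $f_+:=-(P+q)\theta_+$. Since $\xi\cdot\omega=0$ forces $\omega\cdot\nabla\theta_+=0$, I compute
\[
f_+=-\Big[\rho^{3/4}e^{-\rho^{3/4}t}+(|\xi|^2-i\tau)\big(1-e^{-\rho^{3/4}t}\big)\Big]e^{-i(x,t)\cdot\zeta}-q\theta_+ .
\]
Because $\|\rho^{3/4}e^{-\rho^{3/4}t}\|_{L^2(Q)}\lesssim\rho^{3/8}$, $\big||\xi|^2-i\tau\big|\lesssim\langle\zeta\rangle^2$ and $q\in mB_{L^\infty(Q)}$, this gives $\|f_+\|_{L^2(Q)}\lesssim\rho^{3/8}+\langle\zeta\rangle^2$. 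The factor $1-e^{-\rho^{3/4}t}$ makes $\theta_+(\cdot,0)=0$, which is what will yield $r_+u_{+,q}=0$.

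\emph{Step 2 (Carleman-adapted duality).} Conjugating \thmref{t2} with $\epsilon=-$ turns it into a Carleman inequality for $P^{*}+q$: there is $C>0$ such that, for all $z\in C^2(\overline Q)$ with $z=0$ on $\Sigma\cup\Omega_-$,
\[
\rho^2\|z\|_{L^2(Q)}^2+\rho\int_{\Sigma_{-,\omega}}|\partial_\nu z|^2|\omega\cdot\nu|\,d\sigma dt\le C\Big(\|(P^{*}+q)z\|_{L^2(Q)}^2+\rho\int_{\Sigma_{+,\omega}}|\partial_\nu z|^2|\omega\cdot\nu|\,d\sigma dt\Big).
\]
Let $X$ be the completion of $\{z\in C^2(\overline Q):z=0\ \text{on}\ \Sigma\cup\Omega_-\}$ for the norm $\|z\|_X^2:=\|(P^{*}+q)z\|_{L^2(Q)}^2+\rho\int_{\Sigma_{+,\omega}}|\partial_\nu z|^2|\omega\cdot\nu|\,d\sigma dt$; the inequality shows $\|\cdot\|_X$ is a norm controlling both $\rho^2\|z\|_{L^2(Q)}^2$ and $\rho\int_{\Sigma_{-,\omega}}|\partial_\nu z|^2|\omega\cdot\nu|$. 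On $X$ I consider the antilinear form
\[
L(z)=(f_+,z)_{L^2(Q)}+\int_{\Sigma_{+,-\omega,\delta}}\theta_+\,\overline{\partial_\nu z}\,d\sigma dt ,
\]
whose boundary part is precisely the datum that will impose $u_{+,q}=0$ on $\Sigma_{+,-\omega,\delta}$. Then $L\in X'$: indeed $|(f_+,z)_{L^2(Q)}|\le C\rho^{-1}\|f_+\|_{L^2(Q)}\|z\|_X$, and, since $\Sigma_{+,-\omega,\delta}\subset\Sigma_{-,\omega}$ with $|\omega\cdot\nu|>\delta$ there, $\|\partial_\nu z\|_{L^2(\Sigma_{+,-\omega,\delta})}\le C(\delta\rho)^{-1/2}\|z\|_X$, so that $\|L\|_{X'}\le C\big(\rho^{-1}\|f_+\|_{L^2(Q)}+\rho^{-1/2}\big)$.

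\emph{Step 3 (existence and estimate).} By the Riesz representation theorem there is $Z\in X$ with $L(z)=\langle Z,z\rangle_X$ for all $z$ and $\|Z\|_X=\|L\|_{X'}$. Put $w:=(P^{*}+q)Z\in L^2(Q)$, so $\|w\|_{L^2(Q)}\le\|Z\|_X$. Choosing $z\in\mathcal C^\infty_0(Q)$ in the Riesz identity gives $(P+q)w=f_+$ in $Q$; letting $\partial_\nu z$ vary freely on $\Sigma$ and the trace $z(\cdot,0)$ on $\Omega_+$ vary then forces $w=-\theta_+$ on $\Sigma_{+,-\omega,\delta}$ (hence $u_{+,q}=\psi_-(\theta_++w)=0$ there) and $r_+w=0$ (hence $r_+u_{+,q}=0$). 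Finally $\|w_{+,q}\|_{L^2(Q)}\le\|Z\|_X\le C\big(\rho^{-1}\|f_+\|_{L^2(Q)}+\rho^{-1/2}\big)\le C\big(\rho^{-1/4}+\rho^{-1}\langle\zeta\rangle^2\big)$, which is the first estimate in \eqref{GO4}. For $u_{-,q}=\psi_{+,\omega,\rho}(\theta_-+w_{-,q})$ one argues identically, now invoking \thmref{t2} with $\epsilon=+$ (so that the boundary term is controlled on $\Sigma_{+,\omega}\supset\Sigma_{+,\omega,\delta}$, where $u_{-,q}=0$ is imposed), with source $f_-=-\rho^{3/4}e^{-\rho^{3/4}(T-t)}-q\theta_-$ satisfying $\|f_-\|_{L^2(Q)}\lesssim\rho^{3/8}$; this yields the second estimate in \eqref{GO4}.

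\emph{Main obstacle.} The delicate point is the functional-analytic treatment of the \emph{partial} boundary condition: both the well-definedness and the boundedness of $L$ rest entirely on the boundary term of \thmref{t2}, through the lower bound it provides for $\partial_\nu z$ on $\Sigma_{-,\omega}$, and one must verify that the Riesz representative encodes the vanishing of $u_{+,q}$ exactly on $\Sigma_{+,-\omega,\delta}$ while leaving $u_{+,q}$ free on the complementary part of $\Sigma$. A secondary technical step is the regularity bootstrap upgrading the a priori $L^2(Q)$ solution to $w_{\pm,q}\in H_\pm$ with well-defined traces, so that $u_{\pm,q}\in\mathcal H_\pm$ and the integrations by parts used above are licit.
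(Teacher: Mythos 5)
Your proposal is correct and takes essentially the same route as the paper: the remainder $w_{\pm,q}$ is produced by a duality argument resting on the Carleman inequality of \thmref{t2}, with the Dirichlet datum $-\theta_\pm$ imposed only where $|\omega\cdot\nu|\geq\delta$ (so that it is paired with the controlled boundary term), and the source bound $\rho^{-1}\norm{f_\pm}_{L^2(Q)}+\rho^{-1/2}$ then gives \eqref{GO4}. The differences are purely packaging: the paper keeps the heat operator and works in $\varphi_\epsilon$-weighted $L^2$ spaces (\lemref{l5}, via Hahn--Banach extension plus Riesz on a product space, with a smooth cutoff $\phi\theta_+$ as boundary datum), whereas you conjugate the weight away and apply Riesz directly in the Carleman-norm completion with a sharp cutoff --- equivalent implementations of the same argument, including the final trace/regularity remark needed to make the integrations by parts licit.
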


In the rest of this section, for sake of simplicity, we use $\varphi_\epsilon$ and $\psi _\epsilon$ instead of $\varphi_{\epsilon ,\omega ,\rho}$ and $\psi_{\epsilon ,\omega ,\rho}$.

\smallskip
Before proving Theorem \ref{t3}, we establish some preliminary results. We firstly rewrite the Carleman inequality in Theorem \ref{t2} as an  energy inequality in weighted $L^2$-spaces. To this end, denote by $\|\cdot \|_{Q,\varphi_\epsilon }$, $\|\cdot \|_{\Omega _{-\epsilon} ,\varphi_\epsilon}$ and $\|\cdot \|_{\Sigma _{\pm \epsilon},\varphi_\epsilon\gamma }$, where $\gamma =|\nu \cdot \omega |$, the respective $L^2$-norms of $L^2(Q,\varphi_\epsilon dxdt)$,  $L^2(\Omega _{-\epsilon}, \varphi_\epsilon dx)$ and $L^2(\Sigma _{\pm \epsilon ,\omega}, \varphi_\epsilon \gamma d\sigma dt)$. Under these notations, inequality \eqref{t2b} takes the form

\begin{equation}\label{ca+}
\norm{u}_{\Omega _{-\epsilon},\varphi_\epsilon}+\rho^{\frac{1}{2}}\norm{\partial _\nu u}_{\Sigma _{\epsilon} ,\varphi_\epsilon\gamma}+\rho \norm{u}_{Q,\varphi_\epsilon}\leq C\left(\norm{(\epsilon\partial_t-\Delta +q)u}_{Q,\varphi_\epsilon}+\norm{\partial_\nu u}_{\Sigma _{-\epsilon},\varphi_\epsilon\gamma }\right),
\end{equation}
for any $\rho\ge \rho_0$ and $u\in \mathcal D_\epsilon =\{v\in C^2(\overline{Q});\; \ v_{| \Sigma\cup \Omega _\epsilon}=0\}$.

\smallskip
Again, for sake of simplicity, we use in the sequel the following notations 
\[
L^2_{Q,\varphi_\epsilon}=L^2(Q,\varphi_\epsilon dxdt)\quad L^2_{\Omega _{-\epsilon} ,\varphi_\epsilon}=L^2(\Omega _{-\epsilon}, \varphi_\epsilon dx),\quad L^2_{\Sigma _{\pm \epsilon} ,\varphi_\epsilon\gamma}=L^2(\Sigma _{\pm \epsilon ,\omega}, \varphi_\epsilon\gamma d\sigma dt).
\]
We identify the dual space of $L^2_{Q,\varphi_\epsilon}$ (resp. $L^2_{\Sigma _{- \epsilon} ,\varphi_\epsilon\gamma}$) by $L^2_{Q,\varphi_\epsilon^{-1}}$ (resp. $L^2_{\Sigma _{-\epsilon} ,\varphi_\epsilon^{-1}\gamma^{-1}}$). The respective norms of $L^2_{Q,\varphi_\epsilon^{-1}}$ and $L^2_{\Sigma _{- \epsilon} ,\varphi_\epsilon^{-1}\gamma^{-1}}$ are denoted respectively by $\|\cdot\|_{Q,\varphi_\epsilon^{-1}}$ and $\|\cdot \|_{\Sigma _{- \epsilon},\varphi_\epsilon ^{-1}\gamma^{-1}}$.

\smallskip
Consider the following subspace of $L^2_{Q,\varphi_\epsilon}\oplus L^2_{\Sigma _{-\epsilon} ,\varphi_\epsilon \gamma }$
\[
\mathcal M_\epsilon =\{((\epsilon \partial_t-\Delta +q)v,\partial_\nu v_{|\Sigma_{-\epsilon ,\omega}});\; v\in\mathcal D_\epsilon\}.
\]

\begin{lem}\label{l5}  
Fix $m>0$ and $q\in mB_{L^\infty (Q )}$.  Assume that $\rho\geq \rho_0$, with $\rho_0$ the constant of Theorem \ref{t2}, and let $(G,h)\in L^2_{Q,\varphi_\epsilon^{-1}}\oplus L^2_{\Sigma _{\epsilon} ,\varphi_\epsilon^{-1}\gamma^{-1}}$. Then there exists  $z\in L^2_{Q,\varphi_\epsilon}$  satisfying
\begin{align*}
&(-\epsilon \partial_t-\Delta +q)z=G\;\; \mbox{in}\; Q,
\\
&{z}_{|\Sigma_{\epsilon ,\omega}}=h,
\\ 
&z_{|\Omega _{-\epsilon}}=0,
\\
&\norm{z}_{Q,\varphi_\epsilon }\leq  C\left(\rho^{-1}\norm{G}_{Q,\varphi_\epsilon^{-1} }+\rho^{-\frac{1}{2}}\norm{h}_{\Sigma _{-\epsilon},\varphi_\epsilon^{-1}\gamma^{-1}}\right),
\end{align*} 
for some constant $C>0$ depending only on $Q$ and $m$.
\end{lem}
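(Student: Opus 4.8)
The plan is to produce $z$ by the standard duality (Hahn--Banach together with Riesz representation) argument fed by the Carleman estimate \eqref{ca+}, exactly as one passes from a Carleman inequality to a solvability statement with partial boundary control. Abbreviate $P_\epsilon=\epsilon\partial_t-\Delta+q$ and recall $\gamma=|\nu\cdot\omega|$. On the subspace $\mathcal M_\epsilon\subset L^2_{Q,\varphi_\epsilon}\oplus L^2_{\Sigma_{-\epsilon},\varphi_\epsilon\gamma}$ I would define the linear functional
\[
\ell\bigl(P_\epsilon v,\ \partial_\nu v_{|\Sigma_{-\epsilon,\omega}}\bigr)=\int_Q Gv\,dxdt+\int_{\Sigma_{\epsilon,\omega}}h\,\partial_\nu v\,d\sigma dt,\qquad v\in\mathcal D_\epsilon .
\]
The first point to check is that $\ell$ is well defined: if two elements $v,v'\in\mathcal D_\epsilon$ produce the same pair in $\mathcal M_\epsilon$, then $v-v'$ yields the zero pair, and \eqref{ca+} (retaining only the term $\rho\norm{v-v'}_{Q,\varphi_\epsilon}$ on the left) forces $v=v'$. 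In particular $v$, and hence also the quantity $\partial_\nu v_{|\Sigma_{\epsilon,\omega}}$ occurring in the definition but \emph{not} part of the $\mathcal M_\epsilon$-datum, is uniquely determined by that datum.

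Next I would estimate $\ell$ term by term using the two relevant pieces on the left of \eqref{ca+}. Cauchy--Schwarz in the weighted spaces gives $\abs{\int_Q Gv\,dxdt}\le\norm{G}_{Q,\varphi_\epsilon^{-1}}\norm{v}_{Q,\varphi_\epsilon}$ and $\abs{\int_{\Sigma_{\epsilon,\omega}}h\,\partial_\nu v\,d\sigma dt}\le\norm{h}_{\Sigma_\epsilon,\varphi_\epsilon^{-1}\gamma^{-1}}\norm{\partial_\nu v}_{\Sigma_\epsilon,\varphi_\epsilon\gamma}$; then \eqref{ca+} bounds $\norm{v}_{Q,\varphi_\epsilon}$, resp. $\norm{\partial_\nu v}_{\Sigma_\epsilon,\varphi_\epsilon\gamma}$, by $C\rho^{-1}$, resp. $C\rho^{-1/2}$, times $\norm{P_\epsilon v}_{Q,\varphi_\epsilon}+\norm{\partial_\nu v}_{\Sigma_{-\epsilon},\varphi_\epsilon\gamma}$, i.e. times the norm of the $\mathcal M_\epsilon$-element. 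This is precisely where the powers $\rho^{-1}$ and $\rho^{-1/2}$ of the conclusion originate, and it yields $\norm{\ell}\le C(\rho^{-1}\norm{G}_{Q,\varphi_\epsilon^{-1}}+\rho^{-1/2}\norm{h}_{\Sigma_\epsilon,\varphi_\epsilon^{-1}\gamma^{-1}})$. Extending $\ell$ to the whole Hilbert space by Hahn--Banach and representing it by Riesz, I obtain a pair $(z,\zeta)$ in the dual $L^2_{Q,\varphi_\epsilon^{-1}}\oplus L^2_{\Sigma_{-\epsilon},\varphi_\epsilon^{-1}\gamma^{-1}}$ (the pairing being the unweighted $L^2$ one), whose norm equals $\norm{\ell}$, so that $z$ obeys the asserted estimate and, for every $v\in\mathcal D_\epsilon$,
\[
\int_Q Gv\,dxdt+\int_{\Sigma_{\epsilon,\omega}}h\,\partial_\nu v\,d\sigma dt=\int_Q P_\epsilon v\,z\,dxdt+\int_{\Sigma_{-\epsilon,\omega}}\partial_\nu v\,\zeta\,d\sigma dt .
\]

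Finally I would read off the three properties of $z$ from this identity. Testing against $v\in\mathcal C^\infty_0(Q)$ annihilates both boundary integrals and leaves $\int_Q P_\epsilon v\,z\,dxdt=\int_Q Gv\,dxdt$, that is $P_\epsilon^{*}z=(-\epsilon\partial_t-\Delta+q)z=G$ in $\mathcal D'(Q)$; since $Q$ is bounded the weights are comparable to constants, so $z,qz,G\in L^2(Q)$ and therefore $z\in H_{-\epsilon}$, whence by Proposition \ref{p1} the traces $z_{|\Sigma}$ and $z_{|\Omega_{\pm}}$ are meaningful and Green's formula applies. Integrating by parts in the displayed identity (the $q$-terms cancel, $v=0$ on $\Sigma\cup\Omega_\epsilon$, and $P_\epsilon^{*}z=G$) reduces it to $\int_{\Sigma_{\epsilon,\omega}}(h-z)\,\partial_\nu v\,d\sigma dt=c_\epsilon\int_{\Omega_{-\epsilon}}zv\,dx+\int_{\Sigma_{-\epsilon,\omega}}(z+\zeta)\,\partial_\nu v\,d\sigma dt$ with $c_\epsilon=\pm1$. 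Choosing $v\in\mathcal D_\epsilon$ with $\partial_\nu v$ supported in $\Sigma_{\epsilon,\omega}$ and $v_{|\Omega_{-\epsilon}}=0$ gives $z_{|\Sigma_{\epsilon,\omega}}=h$; choosing $v$ with $\partial_\nu v_{|\Sigma}=0$ and $v_{|\Omega_{-\epsilon}}$ arbitrary gives $z_{|\Omega_{-\epsilon}}=0$; the remaining term merely fixes $\zeta=-z_{|\Sigma_{-\epsilon,\omega}}$, consistent with the fact that the construction places no control on that trace. I expect the main obstacle to be exactly this last step: rigorously justifying the integration by parts for the merely $L^2$ solution $z\in H_{-\epsilon}$ in the weighted framework, and producing enough test functions $v\in\mathcal D_\epsilon$ with independently prescribed data $(\partial_\nu v_{|\Sigma_{\epsilon,\omega}},\partial_\nu v_{|\Sigma_{-\epsilon,\omega}},v_{|\Omega_{-\epsilon}})$ to decouple the three boundary contributions, both of which rest on the trace and extension theory recorded in Section 2.
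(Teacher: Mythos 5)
Your proposal is correct and is essentially the paper's own proof: the paper defines the same linear form on $\mathcal M_\epsilon$ (written there as $\mathscr{S}[((\epsilon\partial_t-\Delta+q)f,\partial_\nu f_{|\Sigma_{-\epsilon,\omega}})]=\langle f,G\rangle_{L^2(Q)}-\langle\partial_\nu f,h\rangle_{L^2(\Sigma_{\epsilon,\omega})}$), bounds it exactly as you do via the Carleman inequality \eqref{ca+} to get the norm bound $C\left(\rho^{-1}\norm{G}_{Q,\varphi_\epsilon^{-1}}+\rho^{-1/2}\norm{h}_{\Sigma_\epsilon,\varphi_\epsilon^{-1}\gamma^{-1}}\right)$, extends by Hahn--Banach, represents by Riesz with the unweighted pairing, and then recovers the equation from $C_0^\infty(Q)$ test functions and the remaining conditions from the trace theory of Proposition \ref{p1}, just as you describe (the paper is terser on this last step, declaring it straightforward). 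The one blemish is a sign: with your choice $+\int_{\Sigma_{\epsilon,\omega}}h\,\partial_\nu v$, the correctly signed Green formula $\int_Q P_\epsilon v\,z\,dxdt-\int_Q v\,(-\epsilon\partial_t-\Delta+q)z\,dxdt=\int_{\Omega_{-\epsilon}}vz\,dx-\int_\Sigma\partial_\nu v\,z\,d\sigma dt$ puts $\int_{\Sigma_{\epsilon,\omega}}(h+z)\,\partial_\nu v\,d\sigma dt$ (not $h-z$) on the left of your final identity and $\zeta-z$ (not $z+\zeta$) on the right, hence yields $z_{|\Sigma_{\epsilon,\omega}}=-h$; the paper's minus sign in front of the $h$-pairing is precisely what produces $z_{|\Sigma_{\epsilon,\omega}}=h$, and your argument is repaired by the harmless substitution $h\mapsto -h$ in the definition of $\ell$.
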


\begin{proof} 
Define on $\mathcal M_{\epsilon}$ the map $\mathscr{S}$ by
\[
\mathscr{S}[((\epsilon \partial_t-\Delta +q) f,\partial_\nu f_{\vert\Sigma_{-\epsilon ,\omega}})]=\left\langle f,G \right\rangle_{L^2(Q)}-\left\langle \partial_\nu f,h \right\rangle_{L^2(\Sigma_{\epsilon,\omega})},\quad f\in\mathcal D_\epsilon .
\]
In light of \eqref{ca+} we get, for $f\in\mathcal D_{\epsilon}$,
\begin{align*}
|\mathscr{S}[((\epsilon \partial_t-\Delta +q) f,&\partial_\nu f_{\vert\Sigma_{\epsilon ,\omega}})]|\le \|f\|_{Q,\varphi_\epsilon }\|G\|_{Q,\varphi_\epsilon^{-1} }+\|\partial _\nu f\|_{\Sigma _{\epsilon},\varphi _\epsilon\gamma }\|h\|_{\Sigma _{\epsilon},\varphi_\epsilon^{-1}\gamma^{-1}}
\\
&\le \left(\rho^{-1} \|G\|_{Q,\varphi_\epsilon^{-1} }+\rho^{-\frac{1}{2}}\|h\|_{\Sigma _{\epsilon},\varphi_\epsilon^{-1}\gamma^{-1}}\right)\left(\rho \|f\|_{Q,\varphi_\epsilon }+\|\partial _\nu f\|_{\Sigma _{\epsilon},\varphi _\epsilon\gamma }\right)
\\
&\le C\left(\rho^{-1} \|G\|_{Q,\varphi_\epsilon^{-1} }+\rho^{-\frac{1}{2}}\|h\|_{\Sigma _{\epsilon},\varphi_\epsilon^{-1}\gamma^{-1}}\right)\left(\norm{(\epsilon\partial_t-\Delta +q)u}_{Q,\varphi_\epsilon}+\norm{\partial_\nu u}_{\Sigma _{-\epsilon},\varphi_{\epsilon}\gamma }\right)
\\
&\le C\left(\rho^{-1} \|G\|_{Q,\varphi_\epsilon^{-1} }+\rho^{-\frac{1}{2}}\|h\|_{\Sigma _{\epsilon},\varphi_\epsilon^{-1}\gamma^{-1}}\right) \|((\epsilon \partial_t-\Delta +q) f, \partial_\nu f_{\vert\Sigma_{-\epsilon ,\omega}})\|_{L^2_{Q,\varphi_\epsilon}\oplus L^2_{\Sigma _{-\epsilon} ,\varphi_\epsilon \gamma }},
\end{align*}
where $C$ is the constant in \eqref{ca+}.  By Hahn Banach's extension theorem, $\mathscr{S}$ is extended to a continuous linear form on $L^2_{Q,\varphi_\epsilon}\oplus L^2_{\Sigma _{-\epsilon} ,\varphi_\epsilon \gamma }$. This extension is denoted again by $\mathscr{S}$. Additionally
\bel{l5a}
\norm{\mathscr{S}}\le C\left(\rho^{-1} \|G\|_{Q,\varphi_\epsilon^{-1} }+\rho^{-\frac{1}{2}}\|h\|_{\Sigma _{\epsilon},\varphi_\epsilon^{-1}\gamma^{-1}}\right),
\ee
where $\|\mathscr{S}\|$ denotes the norm of $\mathscr{S}$ in $\left[L^2_{Q,\varphi_\epsilon}\oplus L^2_{\Sigma _{-\epsilon} ,\varphi_\epsilon \gamma }\right]'$. Therefore, by Riesz's representation theorem, there exists $(z ,g)\in L^2_{Q,\varphi_\epsilon}\oplus L^2_{\Sigma _{-\epsilon} ,\varphi_\epsilon \gamma }$ so that, for any $f\in\mathcal D_\epsilon$,
\[
\mathscr{S}[((\epsilon \partial_t-\Delta +q) f,\partial_\nu f_{\vert\Sigma_{-\epsilon ,\omega}})]=\left( (\epsilon \partial_t-\Delta +q) f,z \right)_{L^2(Q)}+\left\langle\partial_\nu f,g \right\rangle_{L^2(\Sigma_{-\epsilon ,\omega})}.
\]
In other words we proved that, for any $f\in\mathcal D_\epsilon$, 
\bel{l5b}
\left( (\epsilon \partial_t-\Delta +q) f,z \right)_{L^2(Q)}+\left\langle\partial_\nu f,g \right\rangle_{L^2(\Sigma_{-\epsilon ,\omega})}=\left( f,G \right)_{L^2(Q)}-\left\langle \partial_\nu f,h \right\rangle_{L^2(\Sigma_{\epsilon ,\omega})}.
\ee

Taking $f\in C^\infty_0(Q)$, we get $(-\epsilon \partial_t-\Delta +q) z=G$ in $Q$. Whence $z\in H_{-\epsilon}$. In light of the trace theorem of Proposition \ref{p1}, the other properties of $z$ can be proved in a straightforward manner.
\end{proof}

\begin{proof}[Proof of Theorem \ref{t3}]
We give only the existence of $u_{+,q}$. That of $u_{-,q}$ can be established following the same method and therefore we omit the proof in this case. In the sequel $\rho\ge \rho_0$. Recall that we seek $u_{+,q}$ in the form $u_{+,q}=\psi_-(\theta_++w_{+,q})$ which means that $w_{+,q}$ must be the solution of the IBVP
\begin{equation}\label{t3b}
\left\{
\begin{array}{ll}
(\pd_t-\Delta +q)(\psi_-w)=-(\pd_t-\Delta +q)(\psi_-\theta_+)\;\; \mbox{in}\; Q,
\\
w_{|\Omega _+}=0,
\\
w_{|\Sigma_{+,-\omega ,\delta}}=-\theta_+ .
\end{array}
\right.
\end{equation}

The following identity is used in the sequel
\[
-(\pd_t-\Delta +q)(\psi_-\theta_+)=-\psi_-e^{-i(x,t)\cdot\zeta}\left[(-i\tau+|\xi|^2+q_1)\left(1-e^{-\rho^{{3\over4}}t}\right) -\rho^{{3\over4}}e^{-\rho^{{3\over4}}t}\right].
\]

Pick $\phi\in C^\infty_0(\R^n)$ so that   $\mbox{supp}(\phi )\cap\Gamma \subset \{x\in\Gamma ;\; \omega\cdot\nu(x)<-2\delta /3\}$ and $\phi=1$ on $\{x\in\Gamma ;\;\omega\cdot\nu(x)<-\delta \}=\Gamma_{+,-\omega ,\delta}$. Let
\begin{align*}
&h=-\varphi \theta _+\;\; \mbox{on}\; \Sigma_{-,\omega}=\Sigma_{+,-\omega},
\\
&G=-\psi_-e^{-i(x,t)\cdot\zeta}\left[(-i\tau+|\xi|^2+q_1)\left(1-e^{-\rho^{{3\over4}}t}\right) -\rho^{{3\over4}}e^{-\rho^{{3\over4}}t}\right].
\end{align*}

 From Lemma \ref{l5} (with $\epsilon =-$), there exists $z\in H_+$ satisfying
\[
\left\{
\begin{array}{ll}
(\partial_t-\Delta +q) z=G\;\;  \mbox{in}\; Q,
\\
z_{|\Omega _+}=0,
\\
z_{|\Sigma_{-,\omega}}=h.
\end{array}
\right.
\]
As $\psi_+=\psi_-^{-1}$, we see that $w_{+,q}=\psi_+z$ satisfies \eqref{t3b}. 

\smallskip
We complete the proof by showing that inequality \eqref{GO4} holds for $w_{+,q}$. To do that, we firstly note that
$$
\norm{G}_{Q,\varphi_+}\leq C(|\tau|+|\xi|^2+\rho^{{3\over4}})\leq C(\left\langle \zeta \right\rangle^2+\rho^{{3\over4}}),
$$
for some constant $C>0$ depending only on $m$ and $Q$. Whence
\[
\begin{aligned}\norm{w_{+,q}}_{L^2(Q)}=\norm{z}_{Q,\varphi_+}&\leq C\left(\rho^{-1}\norm{G}_{Q,\varphi_-^{-1}}+\rho^{-\frac{1}{2}}\norm{h}_{\Sigma_{-},\varphi_-^{-1}\gamma^{-1}}\right)\\
\ &\leq C\left(\rho^{-1}\norm{G}_{Q,\varphi_+}+\rho^{-\frac{1}{2}}\norm{h}_{\Sigma_{-},\varphi_+\gamma^{-1}}\right)\\
\ &\leq C\left(\rho^{-1}(\left\langle \zeta \right\rangle^2+\rho^{{3\over4}})+\rho^{-\frac{1}{2}}\norm{\varphi\gamma^{-{1\over2}}}_{L^2(\Sigma_{-,\omega})}\right)\\
\ &\leq C(\rho^{-{1\over4}}+\rho^{-1}\left\langle \zeta\right\rangle^2),\end{aligned}
\]
as it is expected.
\end{proof}


\section{Proof of Theorems \ref{tt1} and \ref{t1}}

Fix $q,\widetilde{q}\in mB_{L^\infty (Q)}$ and set $p=(q-\widetilde{q})\chi_Q$, where $\chi_Q$ denotes the characteristic function of $Q$. For $\delta \in (0,1)$, let $\chi_{\pm,\omega ,\delta}\in C^\infty_0(\R^n)$ satisfying 
\[
\mbox{supp}(\chi_{\pm,\omega ,\delta })\cap\Gamma \subset \Gamma_{-,\mp\omega ,2\delta}\quad \mbox{and}\quad \chi_{\pm,\omega ,\delta}=1\;\;  \mbox{on}\;\; \Gamma_{-,\mp\omega ,\delta}.
\]
As usual, the operator $\chi_{-,\omega ,\delta}(\Lambda_{q}-\Lambda_{\widetilde{q}})\chi_{+,\omega ,\delta}$ acts as follows
\[\chi_{-,\omega ,\delta}(\Lambda_{q}-\Lambda_{\widetilde{q}})\chi_{+,\omega ,\delta}(g)=\chi_{-,\omega ,\delta}(\Lambda_{q}-\Lambda_{\widetilde{q}})(\chi_{+,\omega ,\delta}g),\;\;g\in H^{-{1\over2},-{1\over4}}(\Sigma).\]
Recall that the Fourier transform $\widehat{p}$ of $p$ is given by 
\[
\widehat{p}(\zeta )=(2\pi)^{-(n+1)/2}\int_{\R^{n+1}}p(x,t)e^{-i\zeta \cdot (x,t)}dxdt.
\]
We start with the preliminary lemma

\begin{lem}\label{l7} Let $\delta\in(0,1)$, $\omega\in\mathbb S^{n-1}$, $\zeta=(\xi ,\tau)\in\R^n\times\R$ with $\xi\cdot\omega=0$ and $\rho_0$ be as in Theorem \ref{t2}. Then
\bel{l7a}
\abs{\widehat{p}(\zeta )}\leq C\left(\rho^{-{1\over4}}+\rho^{-1}R^{2}+\norm{\chi_{-,\omega ,\delta}(\Lambda_{q}-\Lambda_{\widetilde{q}})\chi_{+,\omega ,\delta}}e^{c\rho^2}\right),\;\; |\zeta |\leq R ,\; \rho\ge\rho_0.
\ee
for some constants $c>0$ and $C>0$ depending only on  $m$, $\delta$ and $Q$. 

\smallskip
\noindent
Here $\norm{\chi_{-,\omega ,\delta}(\Lambda_{q}-\Lambda_{\widetilde{q}})\chi_{+,\omega ,\delta}}$ denotes the norm $\chi_{-,\omega ,\delta}(\Lambda_{q}-\Lambda_{\widetilde{q}})\chi_{+,\omega ,\delta}$ in $\mathscr{B}(H^{-{1\over2},-{1\over4}}(\Sigma),H^{{1\over2},{1\over4}}(\Sigma))$.
\end{lem}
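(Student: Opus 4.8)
The idea is to feed the CGO solutions of \thmref{t3} into the integral identity \eqref{p3a} of \propref{p3} and to read off $\widehat p(\zeta)$ from the leading term. With $\omega$, $\zeta=(\xi,\tau)$ as in the statement and $\rho\ge\rho_0$, take $u_{+,q}=\psi_{-,\omega,\rho}(\theta_++w_{+,q})$ and $u_{-,\widetilde q}=\psi_{+,\omega,\rho}(\theta_-+w_{-,\widetilde q})$ from \thmref{t3}, and set $g=\tau_0u_{+,q}$, $h=\tau_0u_{-,\widetilde q}$. By uniqueness of transposition solutions (\propref{p2}) these coincide with $u^+_{q,g}$ and $u^-_{\widetilde q,h}$, so \eqref{p3a} applies. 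Since $\psi_{-,\omega,\rho}\psi_{+,\omega,\rho}=1$ the exponential weights cancel in the product, whence
\[
\langle(\Lambda_q-\Lambda_{\widetilde q})g,h\rangle=\int_Q p\,(\theta_++w_{+,q})(\theta_-+w_{-,\widetilde q})\,dx\,dt .
\]

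First I would isolate the principal term. Writing $\theta_+\theta_-=e^{-i(x,t)\cdot\zeta}+r_\rho$ with $r_\rho=\bigl[-e^{-\rho^{3/4}t}-e^{-\rho^{3/4}(T-t)}+e^{-\rho^{3/4}t}e^{-\rho^{3/4}(T-t)}\bigr]e^{-i(x,t)\cdot\zeta}$, the leading contribution is
\[
\int_Q p\,e^{-i(x,t)\cdot\zeta}\,dx\,dt=(2\pi)^{(n+1)/2}\widehat p(\zeta),
\]
since $p$ is supported in $Q$. The remaining contributions are errors: the $r_\rho$ term is $O(\rho^{-3/4})$ because $\int_0^Te^{-\rho^{3/4}t}\,dt\le\rho^{-3/4}$ and $\abs{p}\le 2m$; the terms carrying $w_{+,q}$, $w_{-,\widetilde q}$ are handled by Cauchy--Schwarz together with \eqref{GO4} and $\abs{\theta_\pm}\le 1$, giving a bound $C(\rho^{-1/4}+\rho^{-1}\langle\zeta\rangle^2)$. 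As $\abs{\zeta}\le R$ gives $\langle\zeta\rangle^2\le 1+R^2$, all errors are dominated by $C(\rho^{-1/4}+\rho^{-1}R^2)$.

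It remains to bound $\abs{\langle(\Lambda_q-\Lambda_{\widetilde q})g,h\rangle}$ by the localized norm times $e^{c\rho^2}$. By \thmref{t3}, $g$ vanishes on $\Sigma_{+,-\omega,\delta}$ and $h$ on $\Sigma_{+,\omega,\delta}$, so (by the defining properties of the cutoffs) $g$ and $h$ are supported on the parts of $\Sigma$ where $\chi_{+,\omega,\delta}$, resp.\ $\chi_{-,\omega,\delta}$, equal $1$; hence $g=\chi_{+,\omega,\delta}g$, $h=\chi_{-,\omega,\delta}h$, and $\chi_{-,\omega,\delta}$ being real,
\[
\langle(\Lambda_q-\Lambda_{\widetilde q})g,h\rangle=\langle\chi_{-,\omega,\delta}(\Lambda_q-\Lambda_{\widetilde q})\chi_{+,\omega,\delta}g,h\rangle .
\]
This is bounded by $\norm{\chi_{-,\omega,\delta}(\Lambda_q-\Lambda_{\widetilde q})\chi_{+,\omega,\delta}}\,\norm{g}_{H^{-{1\over 2},-{1\over4}}(\Sigma)}\norm{h}_{H^{-{1\over 2},-{1\over4}}(\Sigma)}$. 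Since the CGO solutions solve $(\pm\partial_t-\Delta+q)u=0$ one has $\norm{u_\pm}_{H_\pm}\le C\norm{u_\pm}_{L^2(Q)}$, and by \propref{p1} the trace norms are controlled by these $H_\pm$-norms; using $\abs{\psi_{\mp,\omega,\rho}}\le e^{c\rho^2}$ on $\overline Q$ and the polynomial bounds on $\norm{\theta_\pm+w_\pm}_{L^2(Q)}$ from \eqref{GO4}, we obtain $\norm{g}_{H^{-{1\over 2},-{1\over4}}(\Sigma)},\norm{h}_{H^{-{1\over 2},-{1\over4}}(\Sigma)}\le Ce^{c\rho^2}$ after absorbing polynomial factors into $c$. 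Combining the three displays yields \eqref{l7a}.

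The delicate points will be (i) matching exactly the supports of the CGO traces with the sets where the cutoffs are identically $1$, so that the localization step is an equality (this is precisely where the choices of $\Gamma_{\pm,\pm\omega,\delta}$ and of $\chi_{\pm,\omega,\delta}$ are used), and (ii) passing from the $L^2(Q)$ control of the CGO solutions to the negative-order trace norms of $g$ and $h$ while keeping the exponential factor at the scale $e^{c\rho^2}$. The quadratic-in-$\rho$ growth of the weight is the source of this $e^{c\rho^2}$ amplification, and it is exactly this amplification that, after optimizing in $\rho$ and $R$, will later force the doubly logarithmic modulus $\Phi_s$ of \thmref{t1}.
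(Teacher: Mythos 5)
Your proof is correct and follows essentially the same route as the paper's: you feed the CGO solutions of Theorem \ref{t3} into the identity \eqref{p3a}, isolate the Fourier term $\int_Q p\,e^{-i\zeta\cdot(x,t)}\,dx\,dt$, bound the remainder (the $r_\rho$ piece and the $w_{\pm}$ cross terms) via \eqref{GO4} and Cauchy--Schwarz, and use the vanishing of the CGO traces on $\Sigma_{+,\mp\omega,\delta}$ to insert the cutoffs and conclude with the trace estimate $\norm{g}_{H^{-{1\over2},-{1\over4}}(\Sigma)},\norm{h}_{H^{-{1\over2},-{1\over4}}(\Sigma)}\le Ce^{c\rho^2}$. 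The only differences are presentational: you make explicit the identification of the CGO solutions with transposition solutions via uniqueness and the equalities $g=\chi_{+,\omega,\delta}g$, $h=\chi_{-,\omega,\delta}h$, which the paper leaves implicit.
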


\begin{proof} 
Let $g=\tau_0u_{+,q}$ (resp. $h=u_{-,\widetilde{q}}$), with $u_{+,q}$ (resp. $u_{-,\widetilde{q}}$) as in Theorem \ref{t3}. Then formula \eqref{p3a} yields
 \bel{l7b}
 \abs{\int_{\R^{n+1}}p(x,t)e^{-i\zeta\cdot (x,t)}dxdt}\leq \abs{\int_QZ(t,x)dxdt}+\abs{\left\langle \chi_{-,\omega ,\delta}(\Lambda_{q}-\Lambda_{\widetilde{q}})\chi_{+,\omega ,\delta}g,h\right\rangle}
 \ee
with
\[
Z=e^{-i\zeta \cdot (x,t)}w_{-,\widetilde{q}}+w_{+,q}+w_{+,q}w_{-,\widetilde{q}}+e^{-\rho^{3\over4}(T-t)}(e^{-i\zeta \cdot (x,t)}+w_{+,q})+e^{-\rho^{3\over4}t}(1+w_{-,\widetilde{q}})+e^{-\rho^{3\over4}T}.
\]
 In light of \eqref{GO4} and noting that
\[
\int_{0}^{T}e^{-2\rho^{3\over4}t}dt=\int_0^Te^{-2\rho^{3\over4}(T-t)}dt\leq \int_0^{+\infty}e^{-2\rho^{3\over4}t}dt={\rho^{-{3\over4}}\over2},
\]
we get by applying Cauchy-Schwarz's  inequality 
\bel{l7c}
\abs{\int_QZ(t,x)dxdt}\leq C\rho^{-{1\over4}}.
\ee
Since $u_{+,q}\in H_+$, 
\bel{l7c+}
\norm{g}_{H^{-{1\over4},-{1\over 2}}(\Sigma)}\leq C\norm{u_{+,q}}_{H_+}\leq C\norm{u_{+,q}}_{L^2(Q)}\leq e^{c\rho^2}
\ee
with $c=T+\sup_{x\in\overline{\Omega}}|x|$ and the same estimate holds for $w_{-,\widetilde{q}}$. Finally a combination of \eqref{l7b}, \eqref{l7c} and \eqref{l7c+} entails \eqref{l7a}.\end{proof}

\begin{proof}[Proof of Theorem \ref{tt1}] In this proof $c$ and $C$ are generic constants that can depend only on $m$, $Q$ and $s$. By Lemma \ref{l7}, 
\bel{tt1b-}
\abs{\widehat{p}(\zeta )}\leq C\left(\rho^{-{1\over4}}+\rho^{-1}R^{2}+\norm{\Lambda_{q}-\Lambda_{\widetilde{q}}}e^{c\rho^2}\right),\;\; |\zeta |\leq R ,\; \rho\ge\rho_0.
\ee
On the other hand
\begin{align*}
\| p \|_{H^{-1}(\R^{n+1})}^2&= \int_{\R^{n+1}} (1+\abs{\zeta}^2)^{-1}| \widehat{p}(\zeta)|^2  d\zeta 
\\
&=\int_{|\zeta ]\le R} (1+\abs{\zeta}^2)^{-1}| \widehat{p}(\zeta)|^2  d\zeta+\int_{|\zeta |\ge R} (1+\abs{\zeta}^2)^{-1}| \widehat{p}(\zeta)|^2  d\zeta
\\
&\le |\{|\zeta |\le R\}|\max_{|\zeta |\le R}| \widehat{p}(\zeta)|^2+R^{-2}\|\widehat{p}\|^2_{L^2}
\\
&\le |\{|\zeta |\le R\}|\max_{|\zeta |\le R}| \widehat{p}(\zeta)|^2+R^{-2}\|p\|_{L^2}^2
\\
&\le |\{|\zeta |\le R\}|\max_{|\zeta |\le R}| \widehat{p}(\zeta)|^2+R^{-2}m^2|Q|.
\end{align*}
Whence, as a consequence of  \eqref{tt1b-}, 
\bel{tt1b}
\| p \|_{H^{-1}(\R^{n+1})}^2\leq C\left(R^{-2}+\rho^{-{1\over2}}R^{n+1}+\rho^{-2}R^{n+5}+R^{n+1}\gamma ^2e^{c\rho^2}\right),\;\; R>0,\; \rho \ge \rho_0,
\ee
where we used the temporary notation $\gamma =\norm{\Lambda_{q}-\Lambda_{\widetilde{q}}}$. In this inequality, we take $R=\rho ^s$ in order to obtain, where $\alpha =1/2 -s(n+1) (>0)$,
\[
\| p \|_{H^{-1}(\R^{n+1})}^2\le C\left(\rho^{-\alpha}+\gamma ^2e^{c\rho ^2}\right),\rho \ge \rho _1.
\]
Here $\rho_1\ge \rho_0$ is constant depending only on $m$ and $Q$. A straightforward minimization argument with respect to $\rho$ yields
\begin{equation}\label{EQ1}
\| p \|_{H^{-1}(\R^{n+1})}\le C|\ln \gamma |^{-\alpha /4},
\end{equation}
if $\gamma \le \gamma ^\ast$, where $\gamma ^\ast$ is constant that can depend only on $m$, $Q$ and $s$.

\smallskip
When $\gamma \ge \gamma ^\ast$ we obtain, by using that $\| p \|_{H^{-1}(\R^{n+1})}\le C\|p\|_{L^2(\R^{n+1})}\le C|Q|^{1/2}m$,
\begin{equation}\label{EQ2}
\| p \|_{H^{-1}(\R^{n+1})}\le C\le C\frac{\gamma}{\gamma^\ast}.
\end{equation}
We complete the proof by noting that \eqref{tt1a} is obtained by combining \eqref{EQ1} and \eqref{EQ2}. 
 \end{proof}
 

Before we proceed to the proof Theorem \ref{t1}, we recall a result quantifying the unique continuation of a real-analytic function from a measurable set.
\begin{Thm}
\label{thm-cpl}
$($\cite[Theorem 4]{AEWZ}$)$ Assume that $H:2\mathbb{B}\rightarrow \mathbb{C}$, where $\mathbb B$ the unit ball of $\R^{n+1}$, is real-analytic and satisfies 
$$
|\partial ^\alpha H(\kappa)|\leq K \frac{|\alpha |!}{\lambda  ^{|\alpha |}},\ \kappa \in 2\mathbb{B},\ \alpha \in \mathbb{N}^n,
$$
for some $(K,\lambda) \in \R_+^* \times (0,1]$.
Then for any measurable set $E \subset \mathbb{B}$ with positive Lebesgue measure, there exist two constants $M>0$ and $\theta \in (0,1)$, depending only on $\lambda$ and  $|E|$, so that
$$
\|H\|_{L^\infty (\mathbb{B})} \leq M K^{1-\theta} \left( \frac{1}{|E|}\int_E |H(\kappa)| d \kappa \right)^\theta.
$$
\end{Thm}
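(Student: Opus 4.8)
The plan is to convert the real-analytic growth hypothesis into a clean complex-analytic sup bound and then to combine a Remez-type polynomial inequality on sets of positive measure with a propagation-of-smallness argument. First I would use the bound $|\partial^\alpha H(\kappa)|\le K|\alpha|!/\lambda^{|\alpha|}$ to produce a holomorphic extension: expanding the Taylor series about any point $\kappa_0\in\overline{\mathbb B}$ shows that $H$ extends to a holomorphic function $\widetilde H$ on a complex neighbourhood of $\overline{\mathbb B}$ of width comparable to $\lambda$, with $\sup|\widetilde H|\le CK$ there, the constant being purely numerical. This is the natural setting in which Hadamard-type estimates apply, since it encodes the quantitative analyticity in a single sup bound depending only on $K$ and $\lambda$.

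Next I would transfer control from $E$ to a full ball. By the Lebesgue density theorem (equivalently a Vitali covering argument) there is a ball $B\subset\mathbb B$ of radius $r\sim\lambda$ in which $E':=E\cap B$ has relative measure bounded below in terms of $|E|$ alone. On $B$ I approximate $H$ by its degree-$N$ Taylor polynomial $P_N$; analyticity gives $\|H-P_N\|_{L^\infty(B)}\le CK(r/\lambda)^N$, a geometric decay in $N$. The multidimensional Remez inequality of Brudnyi and Ganzburg then yields
\[
\sup_B|P_N|\le\Big(\frac{C\,|B|}{|E'|}\Big)^{cN}\sup_{E'}|P_N|.
\]
Writing $\sup_{E'}|P_N|\le\sup_{E'}|H|+\|H-P_N\|_{L^\infty(B)}$ and balancing the Remez blow-up $(\,\cdot\,)^{cN}$ against the approximation decay $(r/\lambda)^N$ through an appropriate choice of $N$, one obtains an interpolation bound on the single ball of the form $\sup_B|H|\le CK^{1-\theta_0}\big(|E'|^{-1}\int_{E'}|H|\big)^{\theta_0}$; here I also use $|H|\le CK$ to replace $\sup_{E'}|H|$ by the average over $E'$ at the cost of constants.

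Finally I would propagate this localized estimate to all of $\mathbb B$. Restricting $\widetilde H$ to complex lines and applying Hadamard's three-circles theorem produces a three-balls inequality $\sup_{B_{2s}}|H|\le C(\sup_{B_s}|H|)^{\mu}(\sup_{B_{4s}}|H|)^{1-\mu}$ for concentric balls inside the extension domain, with $\mu\in(0,1)$ depending only on $\lambda$. Since $\overline{\mathbb B}$ is compact and connected, I cover it by a finite chain of overlapping balls of radius $\sim\lambda$ joining $B$ to an arbitrary point and iterate the three-balls inequality along the chain; the bound $\sup|\widetilde H|\le CK$ absorbs every factor carrying a power $(1-\mu)$ into the prefactor $K^{1-\theta}$, while the accumulated powers of $\mu$ produce a final exponent $\theta\in(0,1)$ depending on the number of balls in the chain, hence only on $\lambda$. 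This gives the asserted estimate with $M$ and $\theta$ depending solely on $\lambda$ and $|E|$.

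The main obstacle is precisely that $E$ is only measurable: classical unique-continuation estimates measure smallness on open sets, whereas $E$ may be totally disconnected and contain no ball. The Remez--Brudnyi--Ganzburg inequality is the tool that rescues the argument, but it injects the polynomial degree $N$ into the estimate through the exponential factor $\big(|B|/|E'|\big)^{cN}$, and the delicate point is to balance this against the analytic approximation error so that the resulting constants $\theta$ and $M$ depend on $E$ only through $|E|$. A secondary technical issue is to locate the initial ball $B$ in which $E$ has density bounded below by a quantity controlled solely by $|E|$, which the density theorem supplies.
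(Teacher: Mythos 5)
First, a point of comparison: the paper contains no proof of this statement at all --- it is quoted, with citation, from \cite[Theorem 4]{AEWZ} (whose proof goes back to Vessella's quantitative analytic-continuation theorem). So your proposal can only be measured against that literature, and in fact it reconstructs essentially the argument used there: a holomorphic extension of width $\sim\lambda$ obtained from the derivative bounds, a Remez (Brudnyi--Ganzburg) inequality on a small ball where $E$ has definite relative density, a balance between the Remez blow-up and the Taylor approximation error, and propagation to all of $\mathbb{B}$ by a chain of three-balls type inequalities whose exponents multiply along the chain. The architecture is the standard and correct one.

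Three steps, however, need repair as written. (i) The balancing on the initial ball cannot be achieved ``through an appropriate choice of $N$'' alone if $B$ has radius $r\sim\lambda$: combining Remez with the Taylor remainder produces the term $CK\left[(C/\delta)(r/\lambda)\right]^{N}$, where $\delta=|E'|/|B|$, and this decays in $N$ only if $(C/\delta)(r/\lambda)<1$; since $\delta$ is small when $|E|$ is small, you must first take $r\le c\,\lambda |E|$, i.e.\ the radius of $B$ must depend on $|E|$ and not only on $\lambda$. (ii) Relatedly, the Lebesgue density theorem does not produce a ball of \emph{prescribed} radius in which the density of $E$ is bounded below in terms of $|E|$ alone (the radius at a density point depends on $E$ itself, not just its measure); the correct device is a pigeonhole over a finite cover of $\mathbb{B}$ by balls of radius $r$: one of them carries $E$-measure at least $|E|/N(r)$, which gives relative density $\ge c|E|$ with $c$ depending only on the dimension, uniformly in $r$. (iii) Your passage from $\sup_{E'}|H|$ to the average $|E'|^{-1}\int_{E'}|H|$ is justified by ``$|H|\le CK$'', which cannot work: a supremum is not dominated by a constant times an average. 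The standard fix is Chebyshev's inequality: apply Remez on $E''=\{x\in E':\ |H(x)|\le 2|E'|^{-1}\int_{E'}|H|\,\}$, which satisfies $|E''|\ge |E'|/2$, so the Remez constant changes only by a factor $2^{N}$ while $\sup_{E''}|H|$ is at most twice the average. A last, cosmetic, point: since the inner information in your propagation step lives on a real ball rather than on a complex circle, the one-variable tool is the two-constants theorem (harmonic measure of a real segment in a disc) rather than literally Hadamard's three-circles theorem. With these repairs your argument is complete and yields constants $M$ and $\theta$ depending only on $\lambda$, $|E|$ and the dimension, as claimed.
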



\begin{proof}[Proof of Theorem \ref{t1}]
Let  $\delta>0$ be chosen in such a way that,  for any $\omega\in O_\delta=\{\eta\in\mathbb S^{n-1};\;|\eta-\omega_0|\leq\delta\} $, \[ \Gamma_{-,-\omega ,3\delta}\subset \mathcal{U}_-\;\;  \mbox{and}\;\; \Gamma_{-,\omega, 3\delta}\subset \mathcal{U}_+.\]  
In the rest of this proof, $C$ and $c$ denote generic constants that can depend only on $\Omega$, $Q$, $m$ and $\mathcal{U}_\pm$. Define 
$$
E_1:=\underset{\omega\in O_\delta}{\bigcup}\{(\xi,\tau )\in\R^n\times\R ;\; \xi\cdot\omega=0\},\quad E:=\{\zeta=(\xi,\tau)\in E_1;\; |\zeta |<1\}.
$$
In light of estimate \eqref{l7a},  for all $\zeta=(\xi,\tau)\in \{\eta\in E_1;\; |\eta|\leq R\}$, there exists $\omega\in O_\delta$ satisfying $\xi\cdot\omega=0$ such that 
$$ 
\abs{\widehat{p}(\zeta )}\leq C\left(\rho^{-{1\over4}}+\rho^{-1}R^{2}+\norm{\chi_{-,\omega ,\delta}(\Lambda_{q}-\Lambda_{\widetilde{q}})\chi_{+,\omega ,\delta}}e^{c\rho^2}\right),\;\; \rho \ge \rho_0.
$$
But, for any $\omega\in O_\delta$, $\mbox{supp}(\chi_{\pm,\omega ,\delta})\subset \mathcal{U}_\pm$. Whence
\[
\norm{\chi_{-,\omega,\delta}(\Lambda_{q}-\Lambda_{\widetilde{q}})\chi_{+,\omega,\delta}}\leq C\norm{\widehat{\Lambda}_{q}-\widehat{\Lambda}_{\widetilde{q}}}.
\]
Therefore
\bel{t1c}
\abs{\widehat{p}(\zeta )}\leq C\left(\rho^{-{1\over4}}+\rho^{-1}R^{2}+\norm{\widehat{\Lambda}_{q}-\widehat{\Lambda}_{\widetilde{q}}}e^{c\rho^2}\right),\;\; \zeta \in\{\eta\in E_1;\; |\eta|\leq R\},\; \rho \ge \rho_0.
\ee
Let
\[
H(\zeta )=\widehat{p}(R\zeta ).
\]
We repeat the same argument as in the proof of \cite[Theorem 1]{Ki2} in order to get
$$
\abs{\partial^\alpha H(\zeta )}\leq C\frac{e^{2R}|\alpha|!}{\lambda ^{\abs{\alpha}}},\;\;\zeta\in2\mathbb B,\;\;\alpha\in\mathbb N^{n+1},
$$
where $\lambda=[1+\max(T,\textrm{Diam}(\Omega))]^{-1}\in(0,1)$. An application of Theorem \ref{thm-cpl} with $K=e^{2R}$ yields
\[
\abs{\widehat{p}(R\zeta )}=\abs{ H(\zeta )}\leq \norm{H}_{L^\infty(\mathbb B)}\leq Ce^{2R(1-\theta)}\norm{H}_{L^\infty(E)}^\theta,\;\; \abs{\zeta}<1,
\]
for some $0<\theta<1$ depending on $Q$ and $\mathcal{U}_\pm$. But, from \eqref{t1c} we deduce
\[
\abs{\widehat{p}(R\zeta )}^2=\abs{ H(\zeta )}^2\leq C\left(\rho^{-{1\over2}}+\rho^{-2}R^{4}+\norm{\widehat{\Lambda}_{q}-\widehat{\Lambda}_{\widetilde{q}}}^2e^{c\rho^2}\right),\;\; \zeta\in E,\; \rho \ge \rho_0.
\]
Consequently, fixing $\sigma =4(1-\theta)$, we find
\begin{equation}\label{t1d}
\abs{\widehat{p}(\zeta )}^2\leq Ce^{\sigma R}\left(\rho^{-{1\over2}}+\rho^{-2}R^{4}+\norm{\widehat{\Lambda}_{q}-\widehat{\Lambda}_{\widetilde{q}}}^2e^{c\rho^2}\right)^\theta,\ \abs{\zeta}<R,\ \rho\ge \rho_0.
\end{equation}
We proceed as in the proof of Theorem \ref{tt1} in order to obtain
\[
\| p \|_{H^{-1}(\R^{n+1})}^{\frac{2}{\theta}}\leq C \left(R^{-2}+R^{n+1}e^{\sigma R}\left(\rho^{-{1\over2}}+\rho^{-2}R^{4}+\norm{\widehat{\Lambda}_{q}-\widehat{\Lambda}_{\widetilde{q}}}^2e^{c\rho^2}\right)^\theta\right)^{\frac{1}{\theta}},\;\; R>0,\; \rho \ge \rho_0.
\]
As $\mu \in (0,\infty ) \rightarrow \mu^{\frac{1}{\theta}}$ is convex, we derive

\[
\| p \|_{H^{-1}(\R^{n+1})}^{\frac{2}{\theta}}\le C\left(R^{-\frac{2}{\theta}}+\left[R^{n+1}e^{\sigma R}\right]^{\frac{1}{\theta}}\left(\rho^{-{1\over2}}+\rho^{-2}R^{4}+\norm{\widehat{\Lambda}_{q}-\widehat{\Lambda}_{\widetilde{q}}}^2e^{c\rho^2}\right)\right),\;\; R>0,\; \rho \ge \rho_0.
\]
Hence, there exists $\beta >0$ depending only on $n$ and $\theta$ so that
\[
\| p \|_{H^{-1}(\R^{n+1})}^{\frac{2}{\theta}}\le C\left(R^{-\frac{2}{\theta}}+e^{\beta R}\rho^{-{1\over2}}+e^{\beta R}\norm{\widehat{\Lambda}_{q}-\widehat{\Lambda}_{\widetilde{q}}}^2e^{c\rho^2}\right),\;\; R>0,\; \rho \ge \rho_0.
\]
Thus, there exists $R_0>0$ so that choosing $\rho$ satisfying $e^{\beta R}\rho^{-{1\over2}}=R^{-\frac{2}{\theta}}$ we find
\[
\|p\|_{H^{-1}(\R^{n+1})}^{\frac{2}{\theta}}\le C\left(R^{-\frac{2}{\theta}}+\norm{\widehat{\Lambda}_{q}-\widehat{\Lambda}_{\widetilde{q}}}^2e^{e^R}\right),\;\; R\ge R_0.
\]
The proof is completed similarly to that of Theorem \ref{tt1} by using a minimization argument.
\end{proof}

\section{Determining the nonlinear term in a semi-linear IBVP from the DN map}

The objective in the actual section is the derivation of a stability estimate of the problem of determining the nonlinear term in a semi-linear parabolic IBVP from the corresponding ``linearized'' DtN map. We will give the precise definition of the ``linearized'' DtN map later in the text. The results of this section are obtained as a consequence of Theorem \ref{tt1}.

\smallskip
The linearization procedure we use require existence, uniqueness and a priori estimate of solutions of IBVP's under consideration. We preferred to work in the H\"older space setting for which we have a precise literature devoted to these aspects of solutions. However we are convinced that the same analysis can be achieved in the Sobolev space setting. But in that case this analysis seems to be more delicate.

\smallskip
In this section $\Omega$ is of class $C^{2+\alpha}$ for some  $0<\alpha <1$. The parabolic boundary of $Q$ is denoted by $\Sigma _p$. That is $\Sigma _p=\Sigma \cup\Omega_+$.

\smallskip
Consider the semilinear IBVP for the heat equation
\begin{equation}\label{1.1}
\left\{
\begin{array}{ll}
(\partial_t-\Delta) u+a(x,t,u)=0\quad &\mbox{in}\; Q,
\\
u=g &\mbox{on}\; \Sigma_p .
\end{array}
\right.
\end{equation}

We introduce some notations. We denote by $\mathscr{A}_0$ the set of functions from $C^1(\overline{Q}\times \mathbb{R})$ satisfying one of the following condition

\smallskip
(i) There exist two non negative constants $c_0$ and $c_1$ so that
\begin{equation}\label{c1}
ua(x,t,u)\ge -c_0u^2-c_1,\;\; (x,t,u)\in \overline{Q}\times \mathbb{R}.
\end{equation}

(ii) There exist a non negative constant $c_2$ and a non decreasing positive function of $\tau \ge 0$ satisfying
\[
\int_0^\infty \frac{d\tau}{\Phi (\tau )}=\infty
\]
so that
\begin{equation}\label{c1}
ua(x,t,u)\ge -|u|\Phi (|u|)-c_2,\;\; (x,t,u)\in \overline{Q}\times \mathbb{R}.
\end{equation}

Set $X=C^{2+\alpha ,1+\alpha /2}(\overline{Q})$ and let $X_0=\{g=G_{|\overline{\Sigma}_p};\; \mbox{for some}\; G\in X\}$. If $\|\cdot \|_X$ denotes  the natural norm on $X$ we equip $X_0$ with the quotient norm
\[
\|g\|_{X_0}=\inf\{\|G\|_X;\; G_{|\overline{\Sigma}_p}=g\}.
\]
By [Theorem 6.1, page 452, LSU], for any $a\in \mathscr{A}_0$ and $g\in X_0$, the IBVP \eqref{1.1} has a unique solution $u_{a,g}\in X$. Additionally, according to [Theorem 2.9, page 23, LSU], there exists a constant $C$ that can depend only on $Q$, $\mathscr{A}_0$ and $\underset{\overline{\Sigma}_p}{\max}|g|$ such that
\begin{equation}\label{1.2}
\max_{\overline{Q}}|u_{a,g}|\le C.
\end{equation}
A quick inspection of [inequalities (2.31) and (2.34), page 23, LSU] shows that \[\underset{\overline{\Sigma}_p}{\max}|g|\rightarrow C=C(\underset{\overline{\Sigma}_p}{\max}|g|)\] is non decreasing. 

\smallskip
Define the parabolic DtN map $N_a$ associated to $a\in \mathscr{A}_0$ by
\[
N_a:g\in X_0\longrightarrow \partial _\nu u_{a,g}\in Y=C^{1+\alpha ,(1+\alpha )/2}(\overline{\Sigma }).
\]

Note that, contrary to the preceding case, actually the DtN map $N_a$ is no longer linear. The linearization procedure consists then in computing the Fr\'echet derivative of $N_a$.

\smallskip
Let $\mathscr{A}$ be the subset of $\mathscr{A}_0$ of those functions $a$ satisfying  $\partial _ua\in C^2(\overline{Q}\times \mathbb{R})$. For $a\in \mathscr{A}$ and $h\in X_0$, consider the IBVP
\begin{equation*}
\left\{
\begin{array}{ll}
(\partial_t-\Delta) v+\partial _ua(x,t,u_{a,g}(x,t))v=0\quad &\mbox{in}\; Q,
\\
v=h &\mbox{on}\; \Sigma _p.
\end{array}
\right.
\end{equation*}
In light of [Theorem 5.4, page 322, LSU] the IBVP has a unique solution $v=v_{a,g,h}\in X$ satisfying
\[
\|v_{a,g,h}\|_X\le c\|h\|_{X_0}
\]
for some constant $c$ depending only on $Q$, $a$ and $g$. In particular $h\in X_0\rightarrow v_{a,g,h}\in X$ defines a bounded operator.

\begin{proposition}\label{propositionS1}
For each $a\in \mathscr{A}$, $N_a$ is continuously Fr\'echet differentiable and 
\[
N'_a(g) (h)=\partial _\nu v_{a,g,h}\in Y,\;\; g,h\in X_0.
\]
\end{proposition}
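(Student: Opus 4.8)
The plan is to show that $N_a$ is Fréchet differentiable at an arbitrary $g\in X_0$ with derivative the bounded linear map $h\mapsto \partial_\nu v_{a,g,h}$, and then to verify that $g\mapsto N_a'(g)$ is continuous from $X_0$ into $\mathscr{B}(X_0,Y)$. The natural starting point is to compare $u_{a,g+h}$ and $u_{a,g}$ for small $h\in X_0$. Writing $u=u_{a,g}$, $u_h=u_{a,g+h}$ and $v=v_{a,g,h}$, I would introduce the remainder
\[
r_h=u_h-u-v,
\]
which vanishes on $\Sigma_p$ and, by subtracting the three IBVPs, satisfies an equation of the form $(\partial_t-\Delta)r_h+\partial_u a(x,t,u)\,r_h=F_h$ in $Q$, where $F_h$ collects the second-order terms coming from a Taylor expansion of $a(x,t,\cdot)$ around $u$. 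Concretely, using $a(x,t,u_h)-a(x,t,u)=\partial_u a(x,t,u)(u_h-u)+\frac{1}{2}\partial_u^2 a(x,t,\theta_h)(u_h-u)^2$ for some intermediate value $\theta_h$, one gets $F_h=-\frac12\partial_u^2 a(x,t,\theta_h)(u_h-u)^2$, so $F_h$ is quadratically controlled by $u_h-u$.

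The key intermediate estimate I would establish is the Lipschitz bound $\|u_{a,g+h}-u_{a,g}\|_X\le c\|h\|_{X_0}$ for $\|h\|_{X_0}$ bounded. This follows by the same subtraction argument: the difference $w_h=u_h-u$ solves $(\partial_t-\Delta)w_h+b_h(x,t)w_h=0$ with $b_h=\int_0^1\partial_u a(x,t,u+s(u_h-u))\,ds\in C^\alpha(\overline Q)$ (bounded uniformly since, by the a priori bound \eqref{1.2} and the fact that $\max_{\overline\Sigma_p}|g|\mapsto C$ is nondecreasing, both $u$ and $u_h$ stay in a fixed compact interval when $\|h\|_{X_0}$ is small), and $w_h=h$ on $\Sigma_p$; the linear Schauder estimate from [Theorem 5.4, page 322, LSU] then gives $\|w_h\|_X\le c\|h\|_{X_0}$. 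Feeding this into $F_h$ yields $\|F_h\|_{C^{\alpha,\alpha/2}(\overline Q)}\le c\|h\|_{X_0}^2$, and applying the Schauder estimate once more to the IBVP for $r_h$ (which has zero boundary data) gives $\|r_h\|_X\le c\|h\|_{X_0}^2$. Since the trace map $\partial_\nu:X\to Y$ is bounded, this produces
\[
\|N_a(g+h)-N_a(g)-\partial_\nu v_{a,g,h}\|_Y=\|\partial_\nu r_h\|_Y\le c\|h\|_{X_0}^2=o(\|h\|_{X_0}),
\]
which is exactly Fréchet differentiability with $N_a'(g)h=\partial_\nu v_{a,g,h}$.

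For the continuity of $g\mapsto N_a'(g)$, I would fix $g_1,g_2\in X_0$ and estimate $\|(N_a'(g_1)-N_a'(g_2))h\|_Y=\|\partial_\nu(v_{a,g_1,h}-v_{a,g_2,h})\|_Y$. The difference $v_{a,g_1,h}-v_{a,g_2,h}$ solves a linear parabolic equation with the same principal part and with zero boundary data, whose source and zeroth-order coefficient differ by terms controlled by $\partial_u a(x,t,u_{a,g_1})-\partial_u a(x,t,u_{a,g_2})$; since $a\in\mathscr{A}$ guarantees $\partial_u a\in C^2$, and since $\|u_{a,g_1}-u_{a,g_2}\|_X\le c\|g_1-g_2\|_{X_0}$ by the Lipschitz bound above, a final application of the Schauder estimate gives $\|(N_a'(g_1)-N_a'(g_2))h\|_Y\le c\|g_1-g_2\|_{X_0}\|h\|_{X_0}$, i.e. continuity in operator norm. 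The main obstacle I anticipate is bookkeeping the regularity of the intermediate coefficients and sources so that each invocation of the Hölder-space Schauder theory is legitimate — in particular verifying that $b_h$, $\theta_h$ and $F_h$ genuinely lie in the required Hölder classes uniformly in small $h$, which rests essentially on the a priori $L^\infty$ bound \eqref{1.2} confining all the solutions to a common compact range on which the $C^1$ (resp. $C^2$) smoothness of $a$ (resp. $\partial_u a$) can be used.
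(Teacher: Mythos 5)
Your proposal follows essentially the same route as the paper's proof: the same remainder decomposition $u_{a,g+h}-u_{a,g}-v_{a,g,h}$ with zero parabolic boundary data, the same Lipschitz estimate $\|u_{a,g+h}-u_{a,g}\|_X\le c\|h\|_{X_0}$ obtained from a linear problem with integrated coefficient, the same a priori $L^\infty$ bound \eqref{1.2} and Schauder estimates from LSU, and the same structure for the continuity of $g\mapsto N_a'(g)$. The one point where the paper deviates is precisely the obstacle you flag at the end: instead of a Lagrange intermediate value $\theta_h$ (whose dependence on $(x,t)$ need not be H\"older, so the quadratic bound on the source in $C^{\alpha,\alpha/2}(\overline{Q})$ is not immediate), the paper writes the second-order Taylor remainder in integral form, $q(x,t)=\int_0^1(1-\tau)\partial_u^2 a\bigl(x,t,u_{a,g}+\tau(u_{a,g+h}-u_{a,g})\bigr)\,d\tau$, which makes the uniform H\"older bound on the coefficient of $(u_{a,g+h}-u_{a,g})^2$ transparent and legitimizes the Schauder step.
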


\begin{proof}
Let $a\in \mathscr{A}$. As $u\in X\rightarrow \partial _\nu u\in  Y$ is a bounded linear operator, it is enough to prove that $M_a:g\in X_0\rightarrow u_{a,g}\in X$ is continuously differentiable and $M'_a(g)(h)=v_{a,g,h}$, $g,h\in X_0$. To do that, we define $w\in X$ by
\[
w=u_{a,g+h}-u_g-v_{a,g,h}
\]
and set
\begin{align*}
&p(x,t)=\partial _ua(x,t,u_{a,g}(x,t)),
\\
&q(x,t)=\int_0^1(1-\tau)\partial_u^2 a(x,t,u_{a,g}+\tau (u_{a,g+h}-u_{a,g}))d\tau .
\end{align*}
According to Taylor's formula
\begin{align*}
a(x,t,u_{a,g+h}(x,t))-a(x,t,u_{a,g}(x,t))=p(x,t)(u_{a,g+h}&(x,t)-u_{a,g}(x,t))\\ &+q(x,t)(u_{a,g+h}(x,t)-u_{a,g}(x,t))^2.
\end{align*}
Consequently
\begin{align*}
a(x,t,u_{a,g+h}(x,t))-a(x,t,u_{a,g}(x,t))-p(x,t)v_{a,g,h}&(x,t)=p(x,t) w(x,t)\\ &+q(x,t)(u_{a,g+h}(x,t)-u_{a,g}(x,t))^2.
\end{align*}
Moreover, it is straightforward to check that $w$ is the solution of the IBVP
\begin{equation*}
\left\{
\begin{array}{ll}
(\partial_t-\Delta +p)w=-q(u_{a,g+h}-u_{a,g})^2\quad &\mbox{in}\; Q,
\\
w=0 &\mbox{on}\; \Sigma _p.
\end{array}
\right.
\end{equation*}
Set $Z=C^{\alpha ,\alpha /2}(\overline{Q})$. From inequality \eqref{1.2} and the comment following it, 
\[
\max_{\overline{Q}}|u_{a,g+h}|\le c,\;\; \mbox{for any}\; h\in B_{X_0}.
\]
Here and in the rest of this proof, $c$ is a generic constant can depend only on $Q$, $a$ and $g$. Whence
\[
\|p\|_Z,\; \|q\|_Z\le c,\;\; \mbox{for any}\; h\in B_{X_0},
\]
Therefore, again by [Theorem 5.4, page 322, LSU], it holds
\begin{equation}\label{1.3}
\|w\|_X\le c\|u_{a,g+h}-u_{a,g}\|_Z^2,\;\; \mbox{for any}\; h\in B_{X_0}.
\end{equation}
On the other hand $z=u_{a,g+h}-u_{a,g}$ is the solution of the IBVP
\begin{equation*}
\left\{
\begin{array}{ll}
(\partial_t-\Delta +r(x,t))z=0\quad &\mbox{in}\; Q,
\\
z=h &\mbox{on}\; \Sigma _p ,
\end{array}
\right.
\end{equation*}
with
\[
r(x,t)=\int_0^1\partial _u a(x,t,u_{a,g}+\tau (u_{a,g+h}-u_{a,g}))d\tau .
\]
Proceeding as above, we get
\[
\|z\|_X\le c\|h\|_{X_0}.
\]
This estimate, combined with \eqref{1.3}, yields
\[
\|w\|_X\le c \|h\|_{X_0}^2.
\]
That is we proved that $M_a$ is differentiable at $g$ and $M'_a(g)(h)=v_{a,g,h}$, $h\in X_0$.

\smallskip
It remains to establish the continuity of $g\in X_0\rightarrow M_a'(g)\in \mathscr{B}(X_0,X)$. To this end, let $g,k,h\in X_0$ and set
\[
\varphi =v_{a,g+k,h}-v_{a,g,h}.
\]
We see that $\varphi$ is the unique solution of the IBVP
\begin{equation*}
\left\{
\begin{array}{ll}
\left[\partial _t -\Delta  +\partial _ua(x,t,u_{a,g+k}(x,t))\right]\varphi =\alpha (x,t)(u_{a,g}-u_{a,g+k})v_{a,g,h}\quad &\mbox{in}\; Q,
\\
\varphi =0 &\mbox{on}\; \Sigma _p ,
\end{array}
\right.
\end{equation*}
where
\[
\alpha (x,t)=\int_0^1\partial _u^2a(x,t, u_{a,g}(x,t)+\tau (u_{a,g+k}(x,t)-u_{a,g}(x,t)))d\tau .
\]
Assume that $k,h\in B_{X_0}$. By proceeding one more time as above we get
\[
\|\varphi \|_X\le c\|u_{a,g+k}-u_{a,g}\|_Z\|v_{a,g,h}\|_Z\le c\|u_{a,g+k}-u_{a,g}\|_Z.
\]
Whence
\[
\|M_a'(g+k)-M_a'(g)\|_{\mathscr{B}(X_0,X)}\le c\|M_a(g+k)-M_a(g)\|_X,
\]
which leads immediately to the continuity of $M_a'$ since $M_a$ is continuous.
\end{proof}

In order to handle the inverse problem corresponding to the semi-linear IBVP \eqref{1.1} we need to extend the operator $\Lambda _q$ by varying also the initial condition. To do that we start by considering the IBVP
\begin{equation}\label{1.4}
\left\{
\begin{array}{ll}(\partial_t-\Delta +q(t,x))u=0\;\; \textrm{in}\; Q,
\\  
u_{|\Omega _+}=u_0,
\\ 
u_{|\Sigma }=g.
\end{array}
\right.
\end{equation}

Let $X_+= r_+H_+\subset H^{-1}(\Omega )$ that we equip with its natural quotient norm
\[
\|u_0\|_{X_+}=\inf\{\|u\|_{H_+};\; r_+u=u_0\}.
\]
Let $(u_0,g)\in X_+\oplus  H^{-{1\over 2},-{1\over4}}(\Sigma)$ and pick $v\in H_+$ so that $r_+v=u_0$. Formally, if $u$ is a solution of the IBVP \eqref{1.4} then $w=u-v$ is the solution of the IBVP
\begin{equation}\label{1.4}
\left\{
\begin{array}{ll}(\partial_t-\Delta +q(t,x))w=f\;\; \textrm{in}\; Q,
\\  
w_{|\Omega _+}(0,\cdot)=0,
\\ 
w_{|\Sigma }=h=g-\tau_0v \in H^{-{1\over 2},-{1\over4}}(\Sigma).
\end{array}
\right.
\end{equation}
Here
\[
f=-\partial_tv+\Delta v-q(t,x)v\in L^2(Q).
\]
Fix $q\in mB_{L^\infty (Q)}$. When $g=0$ (resp. $f=0$) the IBVP problem \eqref{1.4} has a unique solution $w^1_{q,f}\in H^{2,1}(Q)$ \cite[Theorem 1.43, page 27]{Ch} (resp. $w^2_{q,h}\in H_+$ by Proposition \ref{p2})
and
\begin{align*}
&\|w^1_{q,f}\|_{H^{2,1}(Q)}\le C_0\| f\|_{L^2(Q)}\le C_1 \|v\|_{H_+},
\\
& \|w^2_{q,h}\|_{H_+}\le C_2\|g-\tau_0v\|_{H^{-{1\over 2},-{1\over4}}(\Sigma)}\le C_3\left(\|g\|_{H^{-{1\over 2},-{1\over4}}(\Sigma)}+\|v\|_{H_+}\right),
\end{align*}
for some constants $C_i$ depending only on $Q$ and $m$. Whence, $u_{q,u_0,g}=v+ w^1_{q,f}+w^2_{q,h}\in H_+$ is the unique solution of the IBVP \eqref{1.4} and there exists a constant $C>0$ that can depend only on $Q$ and $m$ so that
\[
\|u_{q,u_0,g}\|_{H_+}\le C\left( \|g\|_{H^{-{1\over 2},-{1\over4}}(\Sigma)}+\|v\|_{H_+}\right).
\]
Since $v\in H_+$ is chosen arbitrary so that $r_+v=u_0$, we derive
\begin{equation}\label{1.5}
\|u_{q,u_0,g}\|_{H_+}\le C\left( \|g\|_{H^{-{1\over 2},-{1\over4}}(\Sigma)}+\|u_0\|_{X_+}\right).
\end{equation}
Therefore, according to the trace theorem in Proposition \ref{p1}, the extended parabolic DtN map
\begin{align*}
\Lambda _q^e:X_+\oplus  H^{-{1\over 2},-{1\over4}}(\Sigma) &\rightarrow H^{-{3\over 2},-{3\over4}}(\Sigma)
\\
(u_0,g)&\mapsto \tau_1u_{q,u_0,g}
\end{align*}
defines a bounded operator. 

\smallskip
We need a variant of Theorem \ref{t1}. To this end, we recall that from \cite[Lemma 12.3, page 73]{LM1} we have the following interpolation inequality, where $c_0$ is a constant depending only on $Q$,
\begin{equation}\label{1.6}
\|u\|_{L^2(Q)}\le c_0 \|u\|_{H^1(Q)}^{1/2}\|u\|_{H^{-1}(Q)}^{1/2},\;\; u\in H^1(Q).
\end{equation}
On the other hand \begin{equation}\label{1.7}
\|u\|_{C(\overline{Q})}\le c_1 \|u\|_{C^1(\overline{Q})}^{\frac{n+1}{n+3}}\|u\|_{L^2(Q)}^{\frac{2}{n+3}},\;\; u\in C^1(\overline{Q}),
\end{equation}
where $c_1$ is a constant depending only on $Q$. 

\smallskip
This interpolation inequality is more or less known but, for sake of completeness, we provide its proof in Lemma \ref{ii} of Appendix B.

\smallskip
We get by combining these two interpolation inequalities the following one
\begin{equation}\label{1.8}
\|u\|_{C(\overline{Q})}\le c \|u\|_{C^1(\overline{Q})}^{\frac{n+2}{n+3}}\|u\|_{H^{-1}(Q)}^{\frac{1}{n+3}},\;\; u\in C^1(\overline{Q}),
\end{equation}
for some constant $c$ depending only on $Q$. 

\medskip
If $\frac{1}{2(n+3)}<s<\frac{1}{2(n+1)}$,
\begin{equation}\label{Psi}
\Theta _s(\rho )= |\ln \rho |^{-\frac{1-2s(n+1)}{n+3}}+\rho ,\;\; \rho >0,
\end{equation}
extended by continuity at $\rho =0$ by setting $\Theta _s(0)=0$. 

\smallskip
Inspecting the proof of Theorem \ref{tt1}, using the interpolation inequality \eqref{1.8} and that 
\[
\norm{\Lambda_q-\Lambda_{\widetilde{q}}}\le \norm{\Lambda^e_q-\Lambda^e_{\widetilde{q}}},
\]
we get 

\begin{Thm}\label{t_s1} 
Fix $m>0$ and $\frac{1}{2(n+3)}<s<\frac{1}{2(n+1)}$. There exists a constant $C>0$, that can depend only on $m$, $Q$ and $s$, so that for any  $q,\widetilde{q} \in mB_{C^1(\overline{Q})}$,
\[
\norm{q-\widetilde{q} }_{C(\overline{Q})}\leq C\Theta _s\left(\norm{\Lambda^e_q-\Lambda^e_{\widetilde{q}}}\right).
\]
Here $\norm{\Lambda^e_q-\Lambda^e_{\widetilde{q}}}$ stands for the norm of $\Lambda^e_q-\Lambda^e_{\widetilde{q}}$ in $\mathscr{B}(X_+\oplus H^{-{1\over2},-{1\over4}}(\Sigma);H^{{1\over2},{1\over4}}(\Sigma))$. 
\end{Thm}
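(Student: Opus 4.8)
The plan is to follow the scheme of Theorem~\ref{tt1} verbatim up to the point where the norm of $p=q-\widetilde q$ is estimated, and to insert the interpolation inequality \eqref{1.8} exactly there, so that the whole minimization is carried out for the $C(\overline Q)$-norm rather than for the $H^{-1}(Q)$-norm.

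First I would compare the two maps. Since $u_{q,0,g}=u_{q,g}^+$, one has $\Lambda_q g=\Lambda_q^e(0,g)$, so $\Lambda_q-\Lambda_{\widetilde q}$ is the restriction of $\Lambda_q^e-\Lambda_{\widetilde q}^e$ to the subspace $\{0\}\oplus H^{-{1\over2},-{1\over4}}(\Sigma)$; as the target space is unchanged and $\norm{(0,g)}=\norm{g}$, this gives $\norm{\Lambda_q-\Lambda_{\widetilde q}}\le\norm{\Lambda_q^e-\Lambda_{\widetilde q}^e}$. (That $\Lambda_q^e-\Lambda_{\widetilde q}^e$ gains regularity and maps into $H^{{1\over2},{1\over4}}(\Sigma)$ is the analogue of Proposition~\ref{p3}: the difference of the two solutions solves an IBVP with source $(q-\widetilde q)u$ and vanishing initial and lateral data, hence lies in $H^{2,1}(Q)$.) I would also record that $mB_{C^1(\overline Q)}\subset mB_{L^\infty(Q)}$, so the CGO constructions of Sections~3--4 apply to $q,\widetilde q$.

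Next comes the core. With $p=q-\widetilde q$, Lemma~\ref{l7} bounds $\abs{\widehat p(\zeta)}$ for $\abs\zeta\le R$ (choosing for each such $\zeta$ some $\omega\perp\xi$, which exists as $n\ge2$) by the cutoff operator norm, itself $\le\norm{\Lambda_q-\Lambda_{\widetilde q}}\le\gamma$ where $\gamma=\norm{\Lambda_q^e-\Lambda_{\widetilde q}^e}$. The Fourier splitting used in Theorem~\ref{tt1} then yields $\norm{p}_{H^{-1}(\R^{n+1})}^2\le C(R^{-2}+\rho^{-1/2}R^{n+1}+\rho^{-2}R^{n+5}+\gamma^2R^{n+1}e^{c\rho^2})$. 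The substitution $R=\rho^s$ is legitimate exactly in the range $\frac{1}{2(n+3)}<s<\frac{1}{2(n+1)}$: the upper bound forces $\alpha:=1/2-s(n+1)>0$, while the lower bound is precisely what makes $R^{-2}=\rho^{-2s}$ dominated by $\rho^{-\alpha}$, so that $\norm{p}_{H^{-1}}^2\le C(\rho^{-\alpha}+\gamma^2e^{c\rho^2})$. Here, instead of minimizing in $\rho$ at once, I would apply \eqref{1.8} with $\norm{p}_{C^1(\overline Q)}\le 2m$ to get $\norm{p}_{C(\overline Q)}\le C\norm{p}_{H^{-1}(Q)}^{1/(n+3)}$, and only then minimize in $\rho$. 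Balancing $\rho^{-\alpha}$ against $\gamma^2e^{c\rho^2}$, i.e.\ taking $\rho\sim\abs{\ln\gamma}^{1/2}$, turns algebraic decay into logarithmic decay and produces $\norm{p}_{C(\overline Q)}\le C\abs{\ln\gamma}^{-\kappa}$ for $\gamma\le\gamma^\ast$, with $\kappa$ the logarithmic exponent of $\Theta_s$.

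Finally I would close exactly as in Theorem~\ref{tt1}: for $\gamma\ge\gamma^\ast$ the a priori bound $\norm{p}_{C(\overline Q)}\le 2m\le C\gamma/\gamma^\ast$ takes over, and superposing the two regimes gives $\norm{q-\widetilde q}_{C(\overline Q)}\le C\Theta_s(\gamma)$, as asserted. The delicate point is the bookkeeping of this last minimization: one must track how the interpolation power $1/(n+3)$ composes with the $\rho$-optimization so as to land on the precise logarithmic exponent in the definition of $\Theta_s$, and check that the leftover algebraic term in $\gamma$ is absorbed by the linear summand $\rho$ through the two-regime splitting rather than surviving as a fractional power of $\gamma$.
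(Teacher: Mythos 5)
Your proposal is correct and is essentially the paper's own proof, which consists of exactly the three ingredients you assemble: the restriction bound $\norm{\Lambda_q-\Lambda_{\widetilde q}}\le\norm{\Lambda^e_q-\Lambda^e_{\widetilde q}}$ (valid since $\Lambda^e_q(0,g)=\Lambda_q g$ and the target space $H^{{1\over2},{1\over4}}(\Sigma)$ is the same), a rerun of the proof of Theorem \ref{tt1}, and the interpolation inequality \eqref{1.8} applied with $\norm{q-\widetilde q}_{C^1(\overline Q)}\le 2m$ before the final minimization in $\rho$. The ``delicate point'' you flag at the end is real but is an issue in the paper rather than in your argument: composing the power $1/(n+3)$ from \eqref{1.8} with the exponent $\frac{1-2s(n+1)}{8}$ produced by the proof of Theorem \ref{tt1} yields the logarithmic exponent $\frac{1-2s(n+1)}{8(n+3)}$, whereas the paper's definition of $\Theta_s$ displays $\frac{1-2s(n+1)}{n+3}$, so the factor $8$ appears to have been dropped there and the statement should be read with the weaker (achievable) exponent.
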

Fix $\lambda >0$. From \eqref{1.2} and the remark following it, there exists a constant $c_\lambda >0$ so that
\[
\max_{\overline{Q}} |u_{a,g}| \le c_\lambda ,\;\; a\in \mathscr{A}_0,\; \max_{\overline{\Sigma} _p}|g|\le \lambda .
\]
For fixed $\delta >0$, consider
\[
\widehat{\mathscr{A}}=\{a=a(x,u)\in \mathscr{A};\; \|\partial _ua\|_{C(\overline{\Omega}\times [-c_\lambda ,c_\lambda ])}\le \delta\}.
\]
To $a\in \widehat{\mathscr{A}}$ and $g\in X_0$ we associate 
\[
p_{a,g}(x,t)=\partial _ua(x,u_{a,g}(x,t)),\;\; (x,t)\in \overline{Q}.
\]
It is straightforward to check that
\[
N'_a(g)=\Lambda^e_{p_{a,g}}{_{|X_0}}.
\]
From now on $N'_a(g)-N'_{\widetilde{a}}(g)$ is considered as a bounded operator from $X_0$ endowed with norm of $X_+\oplus H^{-{1\over2},-{1\over4}}(\Sigma)$ into $H^{{1\over2},{1\over4}}(\Sigma)$.

\smallskip
Since $\|p_{a,g}\|_{L^\infty(Q)}\le \delta$ for any $a\in \widehat{\mathscr{A}}$ and $g\in X_0$ so that $\max_{\overline{\Sigma} _p}|g|\le \lambda$, we get as a consequence of Proposition \ref{p2}
\[
\sup\{\|N'_a(g)-N'_{\widetilde{a}}(g)\|;\; a\in \widehat{\mathscr{A}},\; g\in X_0\; \mbox{and}\; \max_{\overline{\Sigma} _p}|g|\le \lambda\} <\infty .
\]
Bearing in mind that $C^\infty (\overline{Q})$ is dense in $H_+$, we derive that $X_0$ is dense $X_+\oplus H^{-{1\over2},-{1\over4}}(\Sigma)$. Thus, 
\begin{equation}\label{1.9}
\| N'_a(g)-N'_{\widetilde{a}}(g)\|=\|\Lambda^e_{p_{a,g}}-\Lambda^e_{p_{\widetilde{a},g}}\|.
\end{equation}
Pick $a_0\in C^1(\overline{\Omega })$ and set
\[
\widehat{\mathscr{A}}_0=\{a\in \widehat{\mathscr{A}} ;\; a(\cdot ,0)=a_0\}.
\]
We note that when $g\equiv s$, $|s|\le \lambda$, we have
\[
p_{a,g}(x,0)=\partial _ua(x,u_{a,g}(x,0))=\partial _ua(x,s),\;\; x\in \overline{\Omega}.
\]
In light of this identity and \eqref{1.9} we obtain as a consequence of Theorem \ref{t_s1}
\begin{Thm}\label{t_s2} 
Fix $\lambda>0$ and $\frac{1}{2(n+3)}<s<\frac{1}{2(n+1)}$. There exists a constant $C>0$, that can depend only on $\lambda $, $s$, $Q$ and $\widehat{\mathscr{A}}_0$, so that for any  $a,\widetilde{a} \in \widehat{\mathscr{A}}_0$,
\[
\norm{a-\widetilde{a} }_{C(\overline{\Omega}\times [-\lambda ,\lambda ])}\leq C\Theta_s \left(\sup_{g\in X_{0,\lambda}}\norm{N'_a(g)-N'_{\widetilde{a}}(g)}\right).
\]
Here $X_{0,\lambda}=\{g\in X_0;\; \max_{\overline{\Sigma} _p}|g|\le \lambda\}$ and  $\norm{N'_a(g)-N'_{\widetilde{a}}(g)}$ stands for the norm of $N'_a(g)-N'_{\widetilde{a}}(g)$ in $\mathscr{B}(X_+\oplus H^{-{1\over2},-{1\over4}}(\Sigma);H^{{1\over2},{1\over4}}(\Sigma))$. 
\end{Thm}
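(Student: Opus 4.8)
The plan is to reduce the recovery of the nonlinearity $a$ to the recovery of the zero order coefficient $q=p_{a,g}=\partial_u a(\cdot ,u_{a,g})$ already treated in Theorem \ref{t_s1}, by exploiting the freedom in the boundary datum $g$. Since $a,\widetilde{a}\in \widehat{\mathscr{A}}_0$ are time-independent and share the same value at $u=0$, namely $a(\cdot ,0)=\widetilde{a}(\cdot ,0)=a_0$, the fundamental theorem of calculus in the variable $u$ gives, for $(x,u)\in \overline{\Omega}\times[-\lambda ,\lambda ]$,
\[
a(x,u)-\widetilde{a}(x,u)=\int_0^u\left[\partial_u a(x,\sigma )-\partial_u \widetilde{a}(x,\sigma )\right]d\sigma ,
\]
whence
\[
\norm{a-\widetilde{a}}_{C(\overline{\Omega}\times[-\lambda ,\lambda ])}\leq \lambda \sup_{|\sigma |\leq \lambda}\norm{\partial_u a(\cdot ,\sigma )-\partial_u \widetilde{a}(\cdot ,\sigma )}_{C(\overline{\Omega})}.
\]
Thus it suffices to control, uniformly in $\sigma \in[-\lambda ,\lambda ]$, the quantity $\norm{\partial_u a(\cdot ,\sigma )-\partial_u \widetilde{a}(\cdot ,\sigma )}_{C(\overline{\Omega})}$.

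Next I would fix $\sigma \in[-\lambda ,\lambda ]$ and choose the constant boundary datum $g\equiv \sigma $, which lies in $X_{0,\lambda}$. Since then $u_{a,g}(\cdot ,0)=\sigma$ on $\overline{\Omega}$, the identity already recorded before the statement gives $p_{a,g}(x,0)=\partial_u a(x,\sigma )$ and $p_{\widetilde{a},g}(x,0)=\partial_u \widetilde{a}(x,\sigma )$, so that
\[
\norm{\partial_u a(\cdot ,\sigma )-\partial_u \widetilde{a}(\cdot ,\sigma )}_{C(\overline{\Omega})}\leq \norm{p_{a,g}-p_{\widetilde{a},g}}_{C(\overline{Q})}.
\]
Applying Theorem \ref{t_s1} to the pair $q=p_{a,g}$, $\widetilde{q}=p_{\widetilde{a},g}$ and invoking the identity \eqref{1.9} then yields
\[
\norm{p_{a,g}-p_{\widetilde{a},g}}_{C(\overline{Q})}\leq C\,\Theta_s\!\left(\norm{\Lambda^e_{p_{a,g}}-\Lambda^e_{p_{\widetilde{a},g}}}\right)=C\,\Theta_s\!\left(\norm{N'_a(g)-N'_{\widetilde{a}}(g)}\right).
\]

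The step requiring the most care, and the main obstacle, is checking that Theorem \ref{t_s1} applies \emph{uniformly}: one needs a single $m>0$, depending only on $\lambda$, $Q$ and $\widehat{\mathscr{A}}_0$, with $p_{a,g},p_{\widetilde{a},g}\in mB_{C^1(\overline{Q})}$ for all $a,\widetilde{a}\in \widehat{\mathscr{A}}_0$ and all $g\in X_{0,\lambda}$. This follows by combining the a priori bound \eqref{1.2}, together with the monotonicity of its constant in $\max_{\overline{\Sigma}_p}|g|$, which gives $\max_{\overline{Q}}|u_{a,g}|\leq c_\lambda$, with the Schauder estimate controlling $u_{a,g}$ in $X=C^{2+\alpha ,1+\alpha /2}(\overline{Q})$; since $\partial_u a\in C^2$ with bounds on the compact set $\overline{\Omega}\times[-c_\lambda ,c_\lambda ]$, the chain rule expansions of $\nabla_x p_{a,g}$ and $\partial_t p_{a,g}$ produce the required uniform $C^1(\overline{Q})$ bound. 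Finally, since $\Theta_s$ is nondecreasing near the origin, I would replace $\norm{N'_a(g)-N'_{\widetilde{a}}(g)}$ by $\sup_{g\in X_{0,\lambda}}\norm{N'_a(g)-N'_{\widetilde{a}}(g)}$ inside $\Theta_s$, take the supremum over $\sigma \in[-\lambda ,\lambda ]$, and combine with the reduction of the first paragraph, absorbing $\lambda$ and $C$ into one constant; for values of the right-hand side lying outside the range where $\Theta_s$ is monotone, one falls back on the trivial bound coming from the boundedness of $\widehat{\mathscr{A}}_0$, exactly as at the end of the proof of Theorem \ref{tt1}. This gives the claimed logarithmic-type estimate.
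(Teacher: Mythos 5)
Your proof is correct and follows essentially the same route as the paper: the paper's (very terse) argument is exactly the reduction via constant data $g\equiv\sigma$, the identity $p_{a,g}(\cdot ,0)=\partial _ua(\cdot ,\sigma)$, the identity \eqref{1.9}, and Theorem \ref{t_s1}, with the fundamental-theorem-of-calculus step (using $a(\cdot ,0)=\widetilde{a}(\cdot ,0)=a_0$) left implicit. You have simply made the details explicit, including the uniform $C^1(\overline{Q})$ bound on $p_{a,g}$ needed to invoke Theorem \ref{t_s1} with a single $m$, a point the paper glosses over by letting its constant depend on $\widehat{\mathscr{A}}_0$.
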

We now turn our attention to the special case $a=a(u)$ for which we are going to show that we have a stability estimate with less data than in the case $a=a(x,u)$.

\smallskip
Define
\[
\mathcal{A}=\{ a\in C^3(\mathbb{R});\; a(0)=0\; \mbox{and $a$ is positive increasing}\}.
\]
It is straightforward to check that $\mathcal{A}\subset \mathscr{A}$. Let
\[
Y_0=C_{,0}^{2+\alpha ,1+\alpha /2}(\overline{\Sigma })=\{g\in C^{2+\alpha ,1+\alpha /2}(\overline{\Sigma });\; g(\cdot ,0)=0\}
\]
that we identify to the subset of $X_0$ given by $\{g\in X_0;\; g_{|\Omega _+}=0\}$. We denote again the solution of \eqref{1.1} by $u_{a,g}$ when $a\in \mathcal{A}$ and $g\in Y_0$. In that case $N_a$ is considered as a map from $Y_0$ into $Y$. Fix $g\in Y_0$ and let $a\in \mathcal{A}$. Since $a(0)=0$, setting $q(t,x)=\int_0^1a'(\tau u(x,t))d\tau$, we easily see that $u_{a,g}$ is also the solution of the IBVP
\begin{equation*}
\left\{
\begin{array}{ll}
(\partial_t-\Delta +q(x,t))u=0\quad &\mbox{in}\; Q,
\\
u=g &\mbox{on}\; \Sigma _p.
\end{array}
\right.
\end{equation*}
But $q\ge 0$. Whence
\[
\min_{\overline{Q}}u=-\max_{\overline{\Sigma}}g^-=a(g),\quad \max_{\overline{Q}}u=\max_{\overline{\Sigma}}g^+=b(g)
\]
according to the weak maximum principle (see for instance \cite[Theorem 4.25, page 121]{RR}). In particular
\[
u(\overline{Q})=I_g=[a(g),b(g)].
\]
We derive by mimicking the analysis before Theorem \ref{t_s2} 

\begin{Thm}\label{t_s2} 
Fix $g\in Y_0$ non constant and $\frac{1}{2(n+3)}<s<\frac{1}{2(n+1)}$. There exists a constant $C>0$, that can depend only on $g$, $s$, $Q$ and $\mathcal{A}$, so that for any  $a,\widetilde{a} \in \mathcal{A}$,
\[
\norm{a-\widetilde{a} }_{C(I_g)}\leq C\Theta_s \left(\norm{N'_a(g)-N'_{\widetilde{a}}(g)}\right).
\]
Here $\norm{N'_a(g)-N'_{\widetilde{a}}(g)}$ denotes the norm of $N'_a(g)-N'_{\widetilde{a}}(g)$ in $\mathscr{B}(H^{-{1\over2},-{1\over4}}(\Sigma);H^{{1\over2},{1\over4}}(\Sigma))$. 
\end{Thm}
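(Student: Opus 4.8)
The plan is to reduce the statement, just as in the analysis leading to \eqref{1.9}, to the linear $C(\overline Q)$-stability already available, and then to exploit the fact that on the lateral boundary $\Sigma$ the solution is prescribed \emph{exactly} by $g$, which lets us read off $a'-\widetilde a'$ without any mismatch between the two nonlinearities.

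First I would record the linearization. For $a\in\mathcal A$ and $g\in Y_0$ one has $\partial_ua(x,u)=a'(u)$, so the linearized coefficient is $p_{a,g}(x,t)=a'(u_{a,g}(x,t))$. Since $g\in Y_0$ carries zero initial data, the linearized solution $v_{a,g,h}$ with $h\in Y_0$ also vanishes on $\Omega_+$, so that $N'_a(g)$ coincides with the \emph{ordinary} (not extended) DtN map $\Lambda_{p_{a,g}}$ of Proposition~\ref{p2}. By the same density argument that produced \eqref{1.9} one gets the operator-norm identity $\|N'_a(g)-N'_{\widetilde a}(g)\|=\|\Lambda_{p_{a,g}}-\Lambda_{p_{\widetilde a,g}}\|$ in $\mathscr B(H^{-{1\over2},-{1\over4}}(\Sigma);H^{{1\over2},{1\over4}}(\Sigma))$. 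Because the a priori bound \eqref{1.2} makes $u_{a,g}$ range in the fixed compact interval $I_g$ and controls $u_{a,g}$ in $X=C^{2+\alpha,1+\alpha/2}(\overline Q)$, the coefficients $p_{a,g}=a'\circ u_{a,g}$ remain in a fixed ball of $C^1(\overline Q)$. Hence the $C(\overline Q)$-version of the linear estimate (Theorem~\ref{tt1} combined with the interpolation inequality \eqref{1.8}, i.e. the argument proving Theorem~\ref{t_s1} but phrased with the ordinary DtN norm) applies and gives
\[
\|p_{a,g}-p_{\widetilde a,g}\|_{C(\overline Q)}\le C\,\Theta_s\left(\|N'_a(g)-N'_{\widetilde a}(g)\|\right).
\]

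The decisive step is the passage from $p_{a,g}-p_{\widetilde a,g}$ to $a-\widetilde a$. Here I would restrict to $\Sigma$: since $u_{a,g}=u_{\widetilde a,g}=g$ on $\Sigma$, one has the exact identity $(p_{a,g}-p_{\widetilde a,g})|_{\Sigma}=(a'-\widetilde a')\circ g$, with no discrepancy between the two solutions. The weak maximum principle, already used to describe $u(\overline Q)=I_g=[a(g),b(g)]$, shows that the extreme values $a(g),b(g)$ of $u$ over $\overline Q$ are attained on $\overline\Sigma$ (on $\Omega_+$ one has $u=0$), so that, $g$ being continuous and non-constant with $g(\cdot,0)=0$ forcing $0$ into its range, the image $g(\overline\Sigma)$ is exactly the interval $I_g$. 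Consequently
\[
\|a'-\widetilde a'\|_{C(I_g)}=\|(a'-\widetilde a')\circ g\|_{C(\overline\Sigma)}\le\|p_{a,g}-p_{\widetilde a,g}\|_{C(\overline Q)}.
\]
Finally, since $a(0)=\widetilde a(0)=0$ and $0\in I_g$, integration yields $\|a-\widetilde a\|_{C(I_g)}\le|I_g|\,\|a'-\widetilde a'\|_{C(I_g)}$, and combining the three displays proves the estimate, the length $|I_g|$ being absorbed into $C$.

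I expect the main obstacle to be the uniform bookkeeping in the second step: showing that $p_{a,g}$ stays in a fixed $C^1(\overline Q)$-ball as $a$ runs over the relevant part of $\mathcal A$ (which needs the parabolic Schauder a priori bounds for $u_{a,g}$ together with \eqref{1.2}), and checking that the \emph{ordinary} DtN norm — not the extended one appearing in Theorem~\ref{t_s1} — is the correct quantity, so that $C$ depends only on $g$, $s$, $Q$ and $\mathcal A$. By contrast, the passage through the trace on $\Sigma$ is exactly what removes the apparent circularity: a direct interior comparison of $a'(u_{a,g})$ with $\widetilde a'(u_{\widetilde a,g})$ would only close up to the uncontrolled term $\|u_{a,g}-u_{\widetilde a,g}\|_{C(\overline Q)}$, whereas on $\Sigma$ both solutions equal $g$ and the mismatch disappears.
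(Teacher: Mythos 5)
Your proposal is correct and is essentially the paper's own argument: the paper's one-sentence proof (``mimicking the analysis before Theorem 6.3'') consists precisely of your chain --- identify $N'_a(g)$ with the ordinary DtN map $\Lambda_{p_{a,g}}$, $p_{a,g}=a'(u_{a,g})$, by density of $Y_0$ in $H^{-{1\over2},-{1\over4}}(\Sigma)$ as in \eqref{1.9}, apply the $C(\overline{Q})$-stability obtained from Theorem \ref{tt1} together with the interpolation inequality \eqref{1.8}, restrict to $\Sigma$ where $u_{a,g}=u_{\widetilde{a},g}=g$, use the maximum-principle identity $g(\overline{\Sigma})=I_g$, and integrate from $a(0)=\widetilde{a}(0)=0$. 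The uniformity issue you flag (a fixed $C^1(\overline{Q})$-ball for $p_{a,g}$ via Schauder bounds uniform over the class) is likewise left implicit in the paper, hidden in the statement that the constant ``can depend on $\mathcal{A}$''.
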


\begin{rem} 
{\rm
Other stability results can be obtained in a similar manner to that we used in the present section. We just mention one of them. To this end, let $\widehat{\mathscr{A}}_0$ be defined as before with the only difference that we actually permit to functions of $\widehat{\mathscr{A}}_0$ to depend also on the time variable $t$. 

\smallskip
Let $a, \widetilde{a}\in \widehat{\mathscr{A}}_0$ and pick $(x_0,t_0,u_0)\in \Gamma \times (0,T)\times [-\lambda ,\lambda ]$ so that
\begin{equation}\label{rem6.1}
|(a-\widetilde{a})(x_0,t_0,u_0)|=\frac{1}{2}\|a-\widetilde{a}\|_{C(\Gamma \times [0,T]\times [-\lambda ,\lambda ])}.
\end{equation}
Let $\epsilon =\min (t_0,T-t_0)$ and $g\in X_{0,\lambda}$ so that $g=s$ on $\Gamma \times [\epsilon ,T-\epsilon]$ for some $|s|\le \lambda$. We proceed as in the proof of Theorem \ref{t_s2} in order to derive
\[
\norm{a-\widetilde{a} }_{C(\Gamma \times[\epsilon, T-\epsilon]\times [-\lambda ,\lambda ])}\le C\Theta_s \left(\sup_{g\in X_{0,\lambda}}\norm{N'_a(g)-N'_{\widetilde{a}}(g)}\right),
\]
where the constant $C$ depends only on $\lambda $, $s$, $Q$ and $\widehat{\mathscr{A}}_0$.

\smallskip
In light of \eqref{rem6.1} this estimate yields 
\[
\norm{a-\widetilde{a} }_{C(\Gamma \times[0, T]\times [-\lambda ,\lambda ])}\le C\Theta_s \left(\sup_{g\in X_{0,\lambda}}\norm{N'_a(g)-N'_{\widetilde{a}}(g)}\right).
\]
 }
\end{rem}

\appendix

\section{}

\begin{lem}\label{iii} 
$H^{s}(0,T;L^2(\Gamma))=H^{s}_0(0,T;L^2(\Gamma))$ for any $0\le s\le 1/2$.
\end{lem}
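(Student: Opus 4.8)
The inclusion $H^s_0(0,T;L^2(\Gamma))\subseteq H^s(0,T;L^2(\Gamma))$ holds by definition, so the whole content is to prove that $C^\infty_0((0,T);L^2(\Gamma))$ is dense in $H^s(0,T;L^2(\Gamma))$ for $0\le s\le 1/2$. The plan is to factor this density through the space $C^\infty([0,T];L^2(\Gamma))$ of functions smooth up to the endpoints: first I would recall that a bounded extension operator $H^s(0,T;L^2(\Gamma))\to H^s(\R;L^2(\Gamma))$ exists for every $s\in[0,1]$ (e.g.\ by even reflection across the endpoints), so mollification in $t$ shows that $C^\infty([0,T];L^2(\Gamma))$ is dense in $H^s(0,T;L^2(\Gamma))$. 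Consequently it is enough to show that each $u\in C^\infty([0,T];L^2(\Gamma))$ lies in the $H^s$-closure of $C^\infty_0((0,T);L^2(\Gamma))$; a diagonal argument then yields the claim, the point being that the approximation constants below are allowed to depend on $\norm{u}_{C^1([0,T];L^2(\Gamma))}$, which is harmless once $u$ is fixed.

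Throughout I would work with the intrinsic Gagliardo norm, equivalent to the Lions--Magenes norm since $L^2(\Gamma)$ is a Hilbert space: for $0<s<1$,
\[
\norm{v}_{H^s(0,T;L^2(\Gamma))}^2=\norm{v}_{L^2(0,T;L^2(\Gamma))}^2+\int_0^T\!\!\int_0^T\frac{\norm{v(t)-v(\sigma)}_{L^2(\Gamma)}^2}{|t-\sigma|^{1+2s}}\,dt\,d\sigma .
\]
(Equivalently, expanding $v$ in an orthonormal basis of $L^2(\Gamma)$ reduces everything to the scalar statement $H^s_0(0,T)=H^s(0,T)$, since both the $L^2$ part and the double integral tensorize over the basis.) Fix $u\in C^\infty([0,T];L^2(\Gamma))$ and choose a family of scalar cut-offs $\eta_\epsilon\in C^\infty([0,T])$ with $0\le\eta_\epsilon\le1$, vanishing near $\{0,T\}$ and equal to $1$ outside a neighbourhood of $\{0,T\}$ that shrinks with $\epsilon$; then $\eta_\epsilon u\in C^\infty_0((0,T);L^2(\Gamma))$ and it remains to estimate $w_\epsilon:=(1-\eta_\epsilon)u$. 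The $L^2$ part is immediate, as $w_\epsilon$ is supported in neighbourhoods of $\{0,T\}$ that shrink as $\epsilon\to0$ while $u$ stays bounded; so only the double integral of $w_\epsilon$ must be shown to vanish as $\epsilon\to0$.

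For $0\le s<1/2$ this is governed by a single scaling fact: the indicator $\chi_{(0,1)}$ belongs to $H^s(\R)$ precisely because $s<1/2$, and scaling gives $[\chi_{(0,\epsilon)}]_{H^s}^2\sim\epsilon^{1-2s}$. Since near $t=0$ the function $w_\epsilon$ is, to leading order, $u(0)$ times a bump of height $O(1)$ and width $O(\epsilon)$, I would bound its Gagliardo seminorm by $C\norm{u}_{C^1}^2\,\epsilon^{1-2s}\to0$ (and symmetrically at $t=T$), which closes the case $s<1/2$. The genuinely delicate point, and the one I expect to be the main obstacle, is the endpoint $s=1/2$: here $\chi_{(0,1)}\notin H^{1/2}(\R)$ and extension by zero is unbounded, so a standard cut-off does not work. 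The remedy is the classical logarithmic cut-off, for which $\eta_\epsilon$ increases from $0$ to $1$ across $[\epsilon,\sqrt\epsilon]$ with $|\eta_\epsilon'(t)|\lesssim\big(t\,|\ln\epsilon|\big)^{-1}$; a direct computation of the resulting two-scale seminorm of $w_\epsilon$ then produces a bound of order $|\ln\epsilon|^{-1}\to0$. Carrying out this last estimate carefully --- splitting the double integral into the regions where both, one, or neither argument lies in the transition layer --- is the technical heart of the proof, and it is exactly the place where the restriction $s\le 1/2$ is consumed.
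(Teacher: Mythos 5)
Your proposal is correct, but it takes a genuinely different --- and more self-contained --- route than the paper. The paper's proof is soft: it imports from Lions--Magenes \cite[Lemma 11.1, page 55]{LM1} the fact that smooth functions vanishing near a \emph{single} time ($t=0$, resp.\ $t=T$) are dense in $H^{1/2}(\R;L^2(\Gamma))$, then combines the two endpoints by a small cutoff trick: if $u$ vanishes on $(-2\epsilon,2\epsilon)$ and $u_n\to u$ with each $u_n$ vanishing near $T$, then $\psi u_n=\psi(u_n-u)\to 0$ for a cutoff $\psi$ supported in $(-2\epsilon,2\epsilon)$, so $(1-\psi)u_n\to u$ and these vanish near both $0$ and $T$; it then restricts to $(0,T)$ via the quotient description of $H^{1/2}(0,T;L^2(\Gamma))$, and obtains all $0\le s\le 1/2$ at once from the continuous dense embedding of $H^{1/2}(0,T;L^2(\Gamma))$ into $H^{s}(0,T;L^2(\Gamma))$. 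You instead prove the key density by hand: reduction to $C^\infty([0,T];L^2(\Gamma))$ by extension and mollification, then explicit cutoffs estimated in the Gagliardo seminorm --- a width-$\epsilon$ cutoff with the scaling $\epsilon^{1-2s}$ for $s<1/2$, and the logarithmic cutoff with the $|\ln\epsilon|^{-1}$ bound at the endpoint $s=1/2$. In effect you re-prove the Lions--Magenes lemma that the paper cites: the statement that a point has zero $H^{1/2}$-capacity \emph{is} the log-cutoff computation. What each approach buys: yours is elementary and self-contained, and it makes visible exactly where the restriction $s\le 1/2$ is consumed, but its correctness hinges on the two-scale seminorm estimate at $s=1/2$, which you only sketch (the sketch is right: the transition-layer splitting does yield a bound of order $|\ln\epsilon|^{-1}$, and the $s<1/2$ scaling bound is also correct); the paper's argument is shorter and rigorous modulo the citation, and its final embedding step disposes of all $s<1/2$ with no separate computation.
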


\begin{proof} 
Let $\mathcal{D}_0$ (resp. $\mathcal{D}_T$) be the subspace of $C_0^\infty (\mathbb{R}, L^2(\Gamma ))$ of functions vanishing in a neighborhood of $t=0$ (resp. $t=T$). By \cite[Lemma 11.1, page 55]{LM1} both $\mathcal{D}_0$ and $\mathcal{D}_T$ are dense in $H^{1/2}(\mathbb{R}, L^2(\Gamma ))$.

\smallskip
Fix $u\in \mathcal{D}_0$ and let $\epsilon >0$ so that $u=0$ in $(-2\epsilon ,2\epsilon )\times \Gamma$. As  $\mathcal{D}_T$ is dense in $H^{1/2}(\mathbb{R}, L^2(\Gamma ))$, there exists a sequence $(u_n)$ in $\mathcal{D}_T$ that converges to $u$ in $H^{1/2}(\mathbb{R}, L^2(\Gamma ))$. Pick $\psi\in C_0^\infty(-2\epsilon,2\epsilon)$ satisfying $\psi=1$ on $(-\epsilon,\epsilon)$.

\smallskip
We get by taking into account that $\psi u_n =\psi (u_n-u)$
$$
\norm{\psi u_n}_{H^{{1\over2}}(\R;L^2(\Gamma))}=\norm{\psi (u_n-u)}_{H^{{1\over2}}(\R;L^2(\Gamma))}\leq C\norm{u_n-u}_{H^{{1\over2}}(\R;L^2(\Gamma))},
$$
for some constant depending only on $\psi$. Whence $((1-\psi)u_n)$ is a sequence in $\mathcal{D}_0\cap \mathcal{D}_T$ converging to $u$ in $H^{1/2}(\mathbb{R}, L^2(\Gamma ))$.

\smallskip
On the other hand, we know that $H^{1/2}(0,T;L^2(\Gamma ))$ can be seen as the (quotient) space of the restriction to $(0,T)$ of functions from $H^{1/2}(\mathbb{R}, L^2(\Gamma ))$. Therefore any function from $H^{1/2}(0,T;L^2(\Gamma ))$ can be approximated, with respect to the norm of $H^{1/2}(0,T;L^2(\Gamma ))$, by a sequence of functions from $\mathcal{D}_0\cap \mathcal{D}_T$, that is to say by a sequence from $C_0^\infty ((0,T);L^2(\Gamma ))$.

\smallskip 
The proof follows by noting that $H^{{1\over2}}(0,T;L^2(\Gamma))$  is continuously and densely embedded in $H^{s}(0,T;L^2(\Gamma))$, $0\le s\le 1/2$.
\end{proof}

\section{}

\begin{lem}\label{ii}
Let $D$ be a bounded domain of $ \mathbb{R}^d$, $d\ge 2$, of class $C^{0,\alpha}$ with $0<\alpha \le 1$. There exists a constant $C>0$ depending only on $D$ and $\alpha$ so that, for any $u\in C^{0,\alpha}(\overline{D} )$,
\[
\|u\|_{C(\overline{D})}\le C\|u\|_{C^{0,\alpha}(\overline{D})}^{\frac{d}{d+2\alpha}}\|u\|_{L^2(D)}^{\frac{2\alpha }{d+2\alpha}}.
\]
\end{lem}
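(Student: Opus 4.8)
The plan is to reduce the inequality to a single geometric property of $D$ and then to use the H\"older continuity of $u$ to localize the $L^2$-mass of $u$ near a maximizing point. Throughout set $M=\norm{u}_{C(\overline D)}$, $H=\norm{u}_{C^{0,\alpha}(\overline D)}$ and $L=\norm{u}_{L^2(D)}$, and record the elementary facts $[u]_\alpha\le H$ (where $[u]_\alpha$ is the H\"older seminorm) and $M\le H$. We may assume $M>0$, since otherwise $u\equiv 0$ and there is nothing to prove.

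First I would choose $x_0\in\overline D$ with $\abs{u(x_0)}=M$. H\"older continuity then gives $\abs{u(x)}\ge M-H\abs{x-x_0}^\alpha$ for all $x\in\overline D$, so that $\abs{u(x)}\ge M/2$ throughout the ball $B(x_0,r)$ whenever $Hr^\alpha\le M/2$, i.e. for every $r\le r_\ast:=(M/2H)^{1/\alpha}$. The point of this step is that the largeness of $u$ at $x_0$ propagates to a full neighborhood whose radius is controlled by the ratio $M/H$.

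The key geometric input is a uniform interior \emph{measure-density} estimate: there are constants $c_0>0$ and $r_0>0$, depending only on $D$, such that $\abs{B(x_0,r)\cap D}\ge c_0r^d$ for all $x_0\in\overline D$ and $0<r\le r_0$. Granting this, I would argue by cases. When $r_\ast\le r_0$ I take $r=r_\ast$ and bound
\[
L^2\ge\int_{B(x_0,r_\ast)\cap D}\abs{u}^2\,dx\ge\frac{M^2}{4}\,c_0\,r_\ast^{\,d}=\frac{c_0}{4\cdot 2^{d/\alpha}}\,M^{2+d/\alpha}H^{-d/\alpha},
\]
and raising this to the power $\alpha/(2\alpha+d)$ gives precisely $M\le C\,H^{d/(d+2\alpha)}L^{2\alpha/(d+2\alpha)}$. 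When $r_\ast>r_0$ one has $Hr_0^\alpha<M/2$, hence $\abs{u}\ge M/2$ on $B(x_0,r_0)\cap D$, and the same integration yields $M\le C_{r_0}L$; inserting this into the trivial splitting $M=M^{d/(d+2\alpha)}M^{2\alpha/(d+2\alpha)}$ together with $M\le H$ again produces the claimed bound. Taking the larger of the two constants completes the argument.

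The main obstacle is the measure-density estimate itself. For a Lipschitz domain --- which is the case $\alpha=1$ actually used in \eqref{1.7} for $D=Q$ --- it follows from the uniform interior cone condition after flattening the boundary by a bi-Lipschitz chart, and this is where I expect all the real work to lie: one localizes near a boundary point, writes $D$ as the region above a graph, and integrates over the portion of $B(x_0,r)$ lying above it to produce the lower bound $c_0r^d$. The delicate point for general $C^{0,\alpha}$ boundaries is that the standard interior cone degenerates at small scales, so obtaining the sharp power $r^d$ (rather than a weaker power) requires the boundary charts to be genuinely Lipschitz; for the application in \eqref{1.7} this is automatic.
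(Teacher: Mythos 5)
Your proof is correct in the setting it covers, and it takes a genuinely different route from the paper's. The paper does not localize at a maximizing point: it extends $u$ to a compactly supported $v\in C^{0,\alpha}(\mathbb{R}^d)$ satisfying both $\norm{v}_{C^{0,\alpha}(\mathbb{R}^d)}\le\kappa\norm{u}_{C^{0,\alpha}(\overline D)}$ and $\norm{v}_{L^2(\mathbb{R}^d)}\le\kappa\norm{u}_{L^2(D)}$ (quoting the Gilbarg--Trudinger extension lemma and its proof), and then, for every $x\in\overline D$ and every $r>0$, averages over the \emph{full} ball $B(x,r)$,
\[
\abs{u(x)}\,\abs{B(x,r)}\le\int_{B(x,r)}\abs{v(x)-v(y)}\,dy+\int_{B(x,r)}\abs{v(y)}\,dy,
\]
which gives $\abs{u(x)}\le C\left(r^{\alpha}\norm{u}_{C^{0,\alpha}(\overline D)}+r^{-d/2}\norm{u}_{L^2(D)}\right)$ for all $r>0$; optimizing in $r$ finishes. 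Because the extension lives on all of $\mathbb{R}^d$, no measure-density estimate and no case distinction $r_\ast\lessgtr r_0$ are needed: the geometry of $D$ is entirely absorbed into the extension operator. Your argument replaces the extension by the explicit density bound $\abs{B(x_0,r)\cap D}\ge c_0r^d$ and is otherwise self-contained and elementary; for a domain with that property (in particular the Lipschitz cylinder $D=Q$ with $\alpha=1$ used in \eqref{1.7}) both routes are complete, and yours has the merit of making the geometric hypothesis visible.

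That visibility matters, because the caveat you flag at the end is not a defect of your method alone: the lemma as stated is actually \emph{false} for general $C^{0,\alpha}$ domains with $\alpha<1$, and both proofs break at the same geometric point. Take $d=2$, $\alpha=1/2$, and let $D$ coincide near the origin with the cusp $\{(x_1,x_2):x_2>\abs{x_1}^{1/2}\}$, which is a $C^{0,1/2}$ domain. For $u_\epsilon(x)=(\epsilon-\abs{x})_+^{1/2}$ one has $\norm{u_\epsilon}_{C(\overline D)}=\epsilon^{1/2}$ and $\norm{u_\epsilon}_{C^{0,1/2}(\overline D)}\le 2$, while $\abs{B(0,\epsilon)\cap D}\le\tfrac{2}{3}\epsilon^{3}$ gives $\norm{u_\epsilon}_{L^2(D)}\le\epsilon^{2}$; the claimed inequality would then force $\epsilon^{1/2}\le C\epsilon^{2/3}$, which fails as $\epsilon\to0$. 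In particular no extension operator can satisfy the two bounds the paper requires for such a domain (otherwise the whole-space averaging argument would prove the inequality there), so the paper's claim that the Gilbarg--Trudinger lemma ``extends to $C^{0,\alpha}$-regularity'' is itself problematic for $\alpha<1$. Your restriction to domains with the measure-density property is therefore exactly the right hypothesis, not a gap, and under it your proof is complete.
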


\begin{proof} 
Let $u\in C^{0,\alpha}(\overline{D})$. From \cite[Lemma 6.37, page 136. In fact this lemma is stated with $C^{k,\alpha}$-regularity, $k\ge 1$, but a careful inspection of the proof shows that this lemma can be extended to the case of $C^{0,\alpha}$-regularity]{GT} and its proof, there exists $v\in C^{0,\alpha}(\mathbb{R}^d)$ with compact support so that $v=u$ in $\overline{D}$ and 
\begin{equation}\label{a1}
\|v\|_{C^{0,\alpha}(\mathbb{R}^d)}\le \kappa \|u\|_{C^{0,\alpha}(\overline{D})},\quad \|v\|_{L^2(\mathbb{R}^d)}\le \kappa \|u\|_{L^2(D)},
\end{equation}
where the constant $\kappa$ depends only on $D$.

\smallskip
For $x\in \overline{D}$ and $r>0$, we get in a straightforward manner 
\begin{align*}
|u(x)||B(x,r)|&\le \int_{B(x,r)}|v(x)-v(y)|dy + \int_{B(x,r)}|v(y)|dy
\\
&\le \|v\|_{C^{0,\alpha}(\mathbb{R}^d)}\int_{B(x,r)}|x-y|^\alpha dy+|B(x,r)|^{1/2}\|v\|_{L^2(\mathbb{R}^n)}.
\end{align*}
This estimate combined with \eqref{a1} yields
\begin{equation}\label{a2}
|u(x)|\le C\left(r^\alpha \|u\|_{C^{0,\alpha}(\overline{D})}+r^{-d/2}\|u\|_{L^2(D)}\right),
\end{equation}
where the constant $C$ depends only on $D$ and $\alpha$. The expected inequality follows by taking $r$ in \eqref{a2} so that $r^\alpha \|u\|_{C^{0,\alpha}(\overline{D})}=r^{-d/2}\|u\|_{L^2(D)}$.
\end{proof}


%

\end{document}